\documentclass[11pt]{article}

\usepackage{pgf,acadpgf}
\usepackage{amssymb}
\usepackage{amsmath}
\usepackage{amsthm}
\usepackage{tikz-cd}


\numberwithin{equation}{section}

\newtheoremstyle{slplain}
  {\topsep}
  {\topsep}
  {\slshape}
  {0pt}
  {\bfseries}
  {.}
  {0.5em}
  {}

\theoremstyle{slplain}
  \newtheorem{THM}{Theorem}[section]
  \newtheorem{LEM}[THM]{Lemma}
  
  \newtheorem{COR}[THM]{Corollary}
  \newtheorem*{THMNONUM}{Theorem}

\theoremstyle{definition}
  \newtheorem{DEF}[THM]{Definition}
  \newtheorem{EX}{Example}[section]
  
  \newtheorem{ZAD}[THM]{Exercise}
  \newtheorem{PROB}[THM]{Problem}


\newcommand{\Sax}{\Sigma_{ax}}
\newcommand{\Dax}{\Delta_{ax}}
\newcommand{\XRow}{X_r}
\newcommand{\XCol}{X_c}
\newcommand{\Cns}{\mathrm{Prop}}

\newcommand{\id}{\mathrm{id}}
\newcommand{\DedRuleII}[4]{\begin{array}{lcr}
  #1 & & #2\\ \hline \hbox to #4{\hfill}& #3 &\hbox to #4{\hfill}
\end{array}}
\newcommand{\Uniq}{\mathit{Uniq}}
\newcommand{\Th}{\mathrm{Th}}

\renewcommand{\le}{\leqslant}
\renewcommand{\ge}{\geqslant}

\newcommand{\0}{\varnothing}

\renewcommand{\phi}{\varphi}
\renewcommand{\epsilon}{\varepsilon}

\newcommand{\union}{\cup}

\newcommand{\calA}{\mathcal{A}}
\newcommand{\calB}{\mathcal{B}}

\newcommand{\calL}{\mathcal{L}}

\newcommand{\calN}{\mathcal{N}}

\newcommand{\calS}{\mathcal{S}}

\title{Deducibility in Sudoku}
\author{%
  Dragan Ma\v sulovi\'c\\
  University of Novi Sad, Faculty of Sciences\\
  Department of Mathematics and Informatics\\
  Trg Dositeja Obradovi\'ca 3, 21000 Novi Sad, Serbia\\
  e-mail: dragan.masulovic@dmi.uns.ac.rs}

\begin{document}
\maketitle

\begin{abstract}
  In this paper we provide a formalism, \emph{Sudoku logic}, in which a solution is logically deducible
  if for every cell of the grid we can \emph{provably} exclude all but a single option.
  We prove that the deductive system of Sudoku logic is sound and complete, and as a consequence of that
  we prove that a Sudoku puzzle has a unique solution if and only if it has a deducible solution.
  Using the classification of fundamental Sudoku transformations by Adler and Adler
  we then formalize the notion of symmetry in Sudoku and provide a formal proof of Gurth's Symmetrical
  Placement Theorem. In the concluding section of the paper we present a Sudoku formula
  that captures the idea of a Sudoku grid having a unique solution. It turns out that this formula
  is an axiom of Sudoku logic, making it possible for us to offer to the Sudoku community a
  resolution of the Uniqueness Controversy: if we accept Sudoku logic as presented in this paper,
  there is no controversy! Uniqueness is an axiom and, as any other axiom, may freely
  be used in any Sudoku deduction.
\end{abstract}

\section{Introduction}

Sudoku is a logic game for one player where
the goal is to fill the $9\times9$ grid with digits 1, 2, \ldots, 9
so that in every row, every column and every $3\times3$ box each of the digits 1, 2, \ldots, 9
appears exactly once.

Usually the initial setting contains some given digits, and the player is expected to fill in the rest.
What is the smallest number of given digits that ensures a unique solution of a Sudoku puzzle?
It had been conjectured that the answer is 17, and this was confirmed by an exhaustive computer search
in~2014 by McGuire, Tugemann and Civario~\cite{no16cluesudoku}.

Sudoku is traditionally treated by mathematicians as a combinatorial topic.
For example, Felgenhauer and Jarvis proved in~\cite{FelgenhauerJarvis} that there are
$$
  6\,670\,903\,752\,021\,072\,936\,960 \approx 6.67 \cdot 10^{21}
$$
different fully filled Sudoku grids. Now, not all of them are \emph{essentially different}: if we take a fully filled
Sudoku grid and rotate it clockwise by $90^\circ$ we end up with a new Sudoku grid,
but the two can hardly be considered ``essentially different''. When it comes to counting essentially different Sudoku grids, 
Russell and Jarvis proved in~\cite{RussellJarvis} that only $5\,472\,730\,538$ out of $6.67 \cdot 10^{21}$ fully filled
Sudoku grids are essentially different.\footnote{We shall define precisely in Section~\ref{sudoku.sec.transf} what does it mean for Sudoku grids to be essentially different.}

Solving Sudoku puzzles by trial an error is considered unacceptable by Sudoku puritans:
instead we are interested in solutions that are \emph{logically deducible} from the initial
setup. However, the notion of logical deducibility in Sudoku has not been formally specified, so
in this paper we intend to rectify this injustice.

Unlike most mathematical treatments of Sudoku which approach the problems from the combinatorial point of view
(see e.g.~\cite{SudokuSeriously}), in this paper we focus on the notion of deducibility.
The main contribution of this paper is \emph{Sudoku logic}, a formal deductive system
in which a solution is logically deducible
if for every cell of the grid we can \emph{provably} exclude all but a single option.
This point of view is a clear display of the general feeling that Sudoku is a game of elimination.

The beginning of the paper is devoted to the exposition of Sudoku logic:
Section~\ref{sudoku.sec.Formalism} contains a brief discussion of the general nature of
formal deductive systems, Section~\ref{sudoku.sec.Syntax} defines the syntax,
while Section~\ref{sudoku.sec.Sem} defines the semantics of Sudoku logic.

The first nontrivial result of the paper appears in Section~\ref{sudoku.sec.Deduc}
where we introduce the notion of deducibility. We prove there that the deductive system
of Sudoku logic is sound an complete (although the rather technical proof of completeness is
deferred until Appendix~\ref{sudoku.sec.app-A}), and as the immediate consequence of
soundness and completeness we prove that a Sudoku puzzle has a unique solution \emph{if and only if}
it has a logically deducible solution.\footnote{Statements of the form \emph{if and only if} are not typical for
the spoken language, although they are ubiquitous in formal mathematical texts; for example,
this statement means that: (1) if a Sudoku grid has a unique solution then this solution is logically deducible,
and (2) if a Sudoku grid has a logically deducible solution then it is the only solution.}
In other words,
\begin{center}
  uniqueness $=$ logical deducibility.
\end{center}

To demonstrate the strength of Sudoku logic, we devote the remaining part of the paper
to a precise formulation and the proof of Gurth's Symmetrical Placement Theorem, which,
informally, takes the following form:
\begin{quote}
  \textbf{Gurth's Symmetrical Placement Theorem (informally).}
  \textsl{If the initial setup of a Sudoku puzzle exhibits some kind of symmetry,
  and we know that the puzzle has a unique solution, then the solution to the puzzle
  exhibits the same kind of symmetry.}
\end{quote}
But, what is a symmetry? To provide a precise answer, in Section~\ref{sudoku.sec.transf}
we provide a brief discussion of the general nature of symmetry, and the relationship between
a ``symmetrical setup'' and transformations that preserve the setup. 
Starting from the classification of fundamental transformations by Adler and Adler~\cite{AdlerAdler}
we derive in Section~\ref{sudoku.sec.transf-def} the notion of Sudoku transformation which
ina standard manner leads to the notion of automorphism of a Sudoku grid.
The notions of logical deducibility and Sudoku automorphism then make it possible for
us to formally define and prove Gurth's Symmetrical Placement Theorem in Section~\ref{sudoku.sec.Gurth}.

We close the paper with Section~\ref{sudoku.sec.conclusion} in which we tackle Uniqueness Controversy in Sudoku.
We present a formula of Sudoku logic which expresses the fact that a Sudoku grid has a unique solution,
and then show (rather easily) that this is one of the axioms of Sudoku logic. Therefore,
\begin{quote}
  \textsl{if we accept Sudoku logic as presented in this paper, uniqueness of the solution is just another axiom
  of the deductive system; since axioms are always at our disposal in any formal deduction,
  it follows that there is nothing controversial in assuming uniqueness while searching for a logically deducible solution
  for a Sudoku grid!}
\end{quote}

This paper is written under the assumption that the reader is familiar with basics of Sudoku,
as well as with basics of mathematics at the level of any standard discrete mathematics course
for undergraduate computer science students (see for example~\cite{dm1}). Most of the notions
have been introduced slowly and with ample motivation, except for Appendix~\ref{sudoku.sec.app-A}
which is intended for more mathematically inclined readers.

\section{Formal logic}
\label{sudoku.sec.Formalism}

Formal logic (or a formal deductive system)
is a set of formal rules that enables us, humans, to describe and talk competently about
logical reasoning and logical deductions within a ``universe''.
Nowadays there are many different formal logics, but for our purposes
the simplest one, \emph{propositional logic}, will suffice.

The \emph{Sudoku logic} that we now
introduce is a very special propositional logic. Just as a refresher, let us recall that
in propositional logic statements about the state of affairs in the ``universe'' are written
using \emph{propositional formulas} which are formed from \emph{simple propositions} (such as $p$, $q$, $r$, $s$, \ldots),
logical connectives ($\lnot$, $\land$, $\lor$, $\Rightarrow$ and $\Leftrightarrow$) and parentheses like so:
$$
  (\lnot p \lor q) \Rightarrow ((p \land s) \Leftrightarrow \lnot(q \lor s)).
$$
(See~\cite{dm1} for a lot of motivation, and~\cite[Chapter~1]{hedman} for a standard
presentation of the classical propositional logic.) 

Sudoku logic is tailor-made to formally reason about a universe that, in our case, is the classic Sudoku grid.
A Sudoku grid is organized into nine rows ($r_1$, $r_2$, \ldots, $r_9$),
nine columns ($c_1$, $c_2$, \ldots, $c_9$), and nine boxes,~Fig.~\ref{sudoku.fig.notation}.
A cell in row $i$ and column $j$ will be denoted by $r_i c_j$.
Cells $r_i c_i$, $1 \le i \le 9$, form the \emph{main diagonal} of the grid,
while cells $r_i c_{10-i}$, $1 \le i \le 9$, form the \emph{auxiliary diagonal} of the grid~Fig.~\ref{sudoku.fig.notation}.

\begin{figure}
  \centering
  \input pic/notation.pgf
  \caption{Sudoku notation}
  \label{sudoku.fig.notation}
\end{figure}

Simplest statements that we can make about the state of the affairs
are \emph{Sudoku propositions} which take the following form:
$$
  r_ic_j \not\approx d
$$
meaning: in the grid, the cell in row $i$ column $j$ \emph{cannot possibly} contain digit $d$.
Simple propositions then combine into formulas just like in the classical propositional logic.
For example:
$$
  (\lnot(r_1c_1 \not\approx 1) \land \lnot(r_1c_2 \not\approx 3)) \Rightarrow (r_1c_3 \not\approx 1 \land r_1c_3 \not\approx 3).
$$
\begin{center}
  \def\h#1{{\large\textbf{#1}}}
\begin{pgfpicture}
  \pgfsetxvec{\pgfpoint{\acadpgfunit}{0pt}}
  \pgfsetyvec{\pgfpoint{0pt}{\acadpgfunit}}
  \pgfsetlinewidth{\acadpgflinewidth}
  \pgftransformshift{\pgfpointxy{-37.5}{-12.5}}

  \begin{pgfscope}
    \pgfsetlinewidth{0.50mm}
    \pgfpathmoveto{\pgfpointxy{150.0}{100.0}}
    \pgfpathlineto{\pgfpointxy{150.0}{250.0}}
    \pgfusepath{stroke}
  \end{pgfscope}
  \begin{pgfscope}
    \pgfsetlinewidth{0.50mm}
    \pgfpathmoveto{\pgfpointxy{150.0}{250.0}}
    \pgfpathlineto{\pgfpointxy{600.0}{250.0}}
    \pgfusepath{stroke}
  \end{pgfscope}
  \begin{pgfscope}
    \pgfpathmoveto{\pgfpointxy{250.0}{100.0}}
    \pgfpathlineto{\pgfpointxy{250.0}{250.0}}
    \pgfusepath{stroke}
  \end{pgfscope}
  \begin{pgfscope}
    \pgfpathmoveto{\pgfpointxy{350.0}{100.0}}
    \pgfpathlineto{\pgfpointxy{350.0}{250.0}}
    \pgfusepath{stroke}
  \end{pgfscope}
  \begin{pgfscope}
    \pgfsetlinewidth{0.50mm}
    \pgfpathmoveto{\pgfpointxy{450.0}{100.0}}
    \pgfpathlineto{\pgfpointxy{450.0}{250.0}}
    \pgfusepath{stroke}
  \end{pgfscope}
  \begin{pgfscope}
    \pgfpathmoveto{\pgfpointxy{550.0}{100.0}}
    \pgfpathlineto{\pgfpointxy{550.0}{250.0}}
    \pgfusepath{stroke}
  \end{pgfscope}
  \begin{pgfscope}
    \pgfpathmoveto{\pgfpointxy{150.0}{150.0}}
    \pgfpathlineto{\pgfpointxy{600.0}{150.0}}
    \pgfusepath{stroke}
  \end{pgfscope}
  \pgftext[at={\pgfpointxy{200.0}{200.0}}]{\h1}
  \pgftext[at={\pgfpointxy{300.0}{200.0}}]{\h3}
  \pgftext[bottom,at={\pgfpointxy{200.0}{262.0}}]{$c_1$}
  \pgftext[bottom,at={\pgfpointxy{300.0}{262.0}}]{$c_2$}
  \pgftext[bottom,at={\pgfpointxy{400.0}{262.0}}]{$c_3$}
  \pgftext[right,at={\pgfpointxy{138.0}{200.0}}]{$r_1$}
\end{pgfpicture}
\end{center}

The purpose of the following three sections is to define Sudoku logic formally.
Defining a formal logic (or a formal deductive system) goes in three steps.
\begin{description}
  \item[Step 1.]
    We first have to define the \emph{syntax} of the system, which tells us what correctly formed strings of
    symbols look like. For example, we will ensure that
    $$
      ((\lnot(r_1c_1 \not\approx 1) \land \lnot(r_1c_2 \not\approx 3)) \Rightarrow (r_1c_3 \not\approx 1 \land r_1c_3 \not\approx 3))
    $$
    is a correct string of symbols (don't worry about the outer parentheses for now, their presence simplifies
    formal reasoning, but we shall very quickly relax the rigid rules imposed by the definition), and that
    $$
        r_1c_1 \lor \land (r_2c_2 = 5) \Rightarrow (
    $$
    is not. Then we ignore the incorrectly formed strings and call the correct ones \emph{formulas}.
  \item[Step 2.]
    We then have to define the \emph{semantics} of the system, which provides \emph{meaning} to formulas
    (correctly formed strings). This is usually straightforward, as we shall se below.
  \item[Step 3.]
    Finally, we have to describe how \emph{deduction} in the system works.
    Formal deductive systems are required to be \emph{sound}. Fortunately, the soundness of our Sudoku logic
    comes from the fact that it is a special propositional logic, and propositional logic is known to be
    sound (see~\cite{hedman} for details).
\end{description}

\section{Sudoku logic: Syntax}
\label{sudoku.sec.Syntax}

The syntax of Sudoku logic is given by a pair of definitions below. We first define \emph{Sudoku propositions}, which are the
basic building blocks of Sudoku formulas. We can then combine them into more elaborate statements (\emph{Sudoku formulas}) 
using the classic logical connectives (and a lot of parentheses to ensure the unique parsing of the formula).

Sudoku is often characterized as a game of elimination. We shall now formally support this claim by
choosing negative constraints as building blocks of Sudoku formulas.

\begin{DEF}
  A \emph{Sudoku proposition} is a string of symbols of the form
  $$
    r_ic_j \not\approx d,
  $$
  where $i$, $j$ and $d$ can be any of the numbers 1, 2, \ldots, 9.
\end{DEF}

\begin{DEF}
  A \emph{Sudoku formula} is a string of symbols formed according to the following rules:
  \begin{itemize}
    \item \emph{Sudoku propositions} and the symbols $\top$ and $\bot$ are Sudoku formulas;
    \item if $\alpha$ and $\beta$ are Sudoku formulas, then so are the following strings of symbols:
          $\lnot \alpha$, $(\alpha \land \beta)$, $(\alpha \lor \beta)$,
          $(\alpha \Rightarrow \beta)$ and $(\alpha \Leftrightarrow \beta)$;
    \item every Sudoku formula can be constructed by applying the above two rules finitely many times.
  \end{itemize}
\end{DEF}

The first two items in this definition tell us how to build Sudoku formulas:
start from Sudoku propositions and symbols $\top$ and $\bot$,
and then either take a formula we have already built and prefix it with $\lnot$,
or take two formulas we have already built, connect them by $\land$, $\lor$, $\Rightarrow$ or $\Leftrightarrow$
and put parentheses around.

The last item is subtle: it tells us that every Sudoku formula is built this way (that is, other strings of symbols
are \emph{not} Sudoku formulas), and that every Sudoku formula is a finite string of symbols.

Sudoku formulas (as all the propositional formulas)
are often cumbersome. To make conveying ideas using this language
easier we adopt several standard conventions.
\begin{description}
  \item[Convention 1.] We do not write \emph{outer} parentheses in formulas. For example, instead of
     $$
       ((\lnot(r_1c_1 \not\approx 1) \land \lnot(r_1c_2 \not\approx 3)) \Rightarrow (r_1c_3 \not\approx 1 \land r_1c_3 \not\approx 3))
     $$
     we shall write
     $$
       (\lnot(r_1c_1 \not\approx 1) \land \lnot(r_1c_2 \not\approx 3)) \Rightarrow (r_1c_3 \not\approx 1 \land r_1c_3 \not\approx 3).
     $$
  \item[Convention 2.] Logical connectives $\land$ and $\lor$ have higher precedence (they bind more strongly)
    than $\Rightarrow$ and $\Leftrightarrow$, so the above formula can be further simplified to:
    $$
      \lnot(r_1c_1 \not\approx 1) \land \lnot(r_1c_2 \not\approx 3) \Rightarrow (r_1c_3 \not\approx 1 \land r_1c_3 \not\approx 3).
    $$
  \item[Convention 3.] We shall omit parentheses for long stretches of logical connectives $\land$ and, respectively, $\lor$.
    For example, instead of
    $$
      (\alpha_1 \land (\alpha_2 \land (\alpha_3 \land \alpha_4))) \text{ and resp. } ((\beta_1 \lor \beta_2) \lor (\beta_3 \lor \beta_4))
    $$
    we shall simply write
    $$
      \alpha_1 \land \alpha_2 \land \alpha_3 \land \alpha_4 \text{ and resp. } \beta_1 \lor \beta_2 \lor \beta_3 \lor \beta_4.
    $$
\end{description}

Of course, we shall reintroduce the extra parentheses whenever we need to clarify an expression or increase readability.
Let us now introduce two new symbols that will be useful in expressing properties of Sudoku grids.

\begin{DEF}
  Let $r_i c_j \equiv d$ be a shorthand for:
  $$
    r_i c_j \equiv d: \qquad \bigwedge_{s \ne d} r_i c_j \not\approx s.
  $$
  In that case we say that the cell $r_i c_j$ is \emph{uniquely determined}.
\end{DEF}

For example, $r_2 c_3 \equiv 7$ is a shorthand for
$$
  \begin{array}{l}
  r_2 c_3 \not\approx 1 \land r_2 c_3 \not\approx 2 \land r_2 c_3 \not\approx 3 \land r_2 c_3 \not\approx 4 \land \mathstrut\\
  r_2 c_3 \not\approx 5 \land r_2 c_3 \not\approx 6 \land r_2 c_3 \not\approx 8 \land r_2 c_3 \not\approx 9.
  \end{array}
$$
It turns out that the most efficient way to claim that the only possibility for a cell $r_ic_j$ to be $d$ in Sudoku logic
\emph{is to assert that it cannot take any other value!}

\begin{DEF}
  Let $r_ic_j \parallel r_\ell c_m$ be a shorthand for:
  $$
    r_ic_j \parallel r_\ell c_m: \qquad \bigwedge_{1 \le d \le 9} ((r_i c_j \equiv d \Rightarrow r_\ell c_m \not\approx d) \land 
                                        (r_\ell c_m \equiv d \Rightarrow r_i c_j \not\approx d)).
  $$
\end{DEF}

The formula $r_ic_j \parallel r_\ell c_m$ conveys the message that whenever one of the two cells 
is uniquely determined, the other cannot possibly take the same value.

To see all this in action let us write down the three \emph{Sudoku axioms} using Sudoku formulas:
\begin{description}
  \item[A1.] in every row all the digits have to be distinct,
  \item[A2.] in every column all the digits have to be distinct,
  \item[A3.] in every box all the digits have to be distinct
\end{description}
The axiom A1 can be broken down into nine \emph{row conditions}:
$$
  \text{\bf A1:} \quad R_1 \land R_2 \land \ldots \land R_9
$$
where each row condition $R_i$ asserts that in row $i$ all the digits are distinct. This can be written down compactly as
$$
  R_i: \quad \bigwedge_{1 \le j < m \le 9} r_ic_j \parallel r_ic_m.
$$
Analogously, the axiom A2 can be broken down into nine \emph{column conditions}:
$$
  \text{\bf A2:} \quad C_1 \land C_2 \land \ldots \land C_9
$$
where each column condition $C_j$ asserts that in column $j$ all the digits are distinct. This can be written down compactly as
$$
  C_j: \quad \bigwedge_{1 \le i < \ell \le 9} r_ic_j \parallel r_\ell c_j.
$$
The axiom A3 can also be broken down into nine \emph{box conditions}:
$$
  \text{\bf A3:} \quad B_1 \land B_2 \land \ldots \land B_9
$$
where each box condition $B_n$ asserts that in box $n$ all the digits are distinct. This is not easy to write down
compactly. As an example of what we are dealing with,
formulas $R_1$, $C_2$ and $B_3$ are written down completely in Fig.~\ref{sudoku.fig.formulas}.
Finally, let
$$
  \Sax = \{R_1, R_2, \ldots, R_9, C_1, C_2, \ldots, C_9, B_1, B_2, \ldots, B_9\}
$$
be the set of Sudoku formulas that captures the axioms of classic Sudoku.

\begin{figure}
  \centering
  \begin{tabular}{r@{\,}c@{\,}l}
    $R_1$ &=& \phantom{$\land$ }$(r_1 c_1 \parallel r_1 c_2) \land (r_1 c_1 \parallel r_1 c_3) \land (r_1 c_1 \parallel r_1 c_4) \land (r_1 c_1 \parallel r_1 c_5)$\\
          & &   $\mathstrut\land (r_1 c_1 \parallel r_1 c_6) \land (r_1 c_1 \parallel r_1 c_7) \land (r_1 c_1 \parallel r_1 c_8) \land (r_1 c_1 \parallel r_1 c_9)$\\
          & &   $\mathstrut\land (r_1 c_2 \parallel r_1 c_3) \land (r_1 c_2 \parallel r_1 c_4) \land (r_1 c_2 \parallel r_1 c_5) \land (r_1 c_2 \parallel r_1 c_6)$\\
          & &   $\mathstrut\land (r_1 c_2 \parallel r_1 c_7) \land (r_1 c_2 \parallel r_1 c_8) \land (r_1 c_2 \parallel r_1 c_9) \land (r_1 c_3 \parallel r_1 c_4)$\\
          & &   $\mathstrut\land (r_1 c_3 \parallel r_1 c_5) \land (r_1 c_3 \parallel r_1 c_6) \land (r_1 c_3 \parallel r_1 c_7) \land (r_1 c_3 \parallel r_1 c_8)$\\
          & &   $\mathstrut\land (r_1 c_3 \parallel r_1 c_9) \land (r_1 c_4 \parallel r_1 c_5) \land (r_1 c_4 \parallel r_1 c_6) \land (r_1 c_4 \parallel r_1 c_7)$\\
          & &   $\mathstrut\land (r_1 c_4 \parallel r_1 c_8) \land (r_1 c_4 \parallel r_1 c_9) \land (r_1 c_5 \parallel r_1 c_6) \land (r_1 c_5 \parallel r_1 c_7)$\\
          & &   $\mathstrut\land (r_1 c_5 \parallel r_1 c_8) \land (r_1 c_5 \parallel r_1 c_9) \land (r_1 c_6 \parallel r_1 c_7) \land (r_1 c_6 \parallel r_1 c_8)$\\
          & &   $\mathstrut\land (r_1 c_6 \parallel r_1 c_9) \land (r_1 c_7 \parallel r_1 c_8) \land (r_1 c_7 \parallel r_1 c_9) \land (r_1 c_8 \parallel r_1 c_9)$\\[3mm]
    $C_2$ &=& \phantom{$\land$ }$(r_1 c_2 \parallel r_2 c_2) \land (r_1 c_2 \parallel r_3 c_2) \land (r_1 c_2 \parallel r_4 c_2) \land (r_1 c_2 \parallel r_5 c_2)$\\
          & &   $\mathstrut\land (r_1 c_2 \parallel r_6 c_2) \land (r_1 c_2 \parallel r_7 c_2) \land (r_1 c_2 \parallel r_8 c_2) \land (r_1 c_2 \parallel r_9 c_2)$\\
          & &   $\mathstrut\land (r_2 c_2 \parallel r_3 c_2) \land (r_2 c_2 \parallel r_4 c_2) \land (r_2 c_2 \parallel r_5 c_2) \land (r_2 c_2 \parallel r_6 c_2)$\\
          & &   $\mathstrut\land (r_2 c_2 \parallel r_7 c_2) \land (r_2 c_2 \parallel r_8 c_2) \land (r_2 c_2 \parallel r_9 c_2) \land (r_3 c_2 \parallel r_4 c_2)$\\
          & &   $\mathstrut\land (r_3 c_2 \parallel r_5 c_2) \land (r_3 c_2 \parallel r_6 c_2) \land (r_3 c_2 \parallel r_7 c_2) \land (r_3 c_2 \parallel r_8 c_2)$\\
          & &   $\mathstrut\land (r_3 c_2 \parallel r_9 c_2) \land (r_4 c_2 \parallel r_5 c_2) \land (r_4 c_2 \parallel r_6 c_2) \land (r_4 c_2 \parallel r_7 c_2)$\\
          & &   $\mathstrut\land (r_4 c_2 \parallel r_8 c_2) \land (r_4 c_2 \parallel r_9 c_2) \land (r_5 c_2 \parallel r_6 c_2) \land (r_5 c_2 \parallel r_7 c_2)$\\
          & &   $\mathstrut\land (r_5 c_2 \parallel r_8 c_2) \land (r_5 c_2 \parallel r_9 c_2) \land (r_6 c_2 \parallel r_7 c_2) \land (r_6 c_2 \parallel r_8 c_2)$\\
          & &   $\mathstrut\land (r_6 c_2 \parallel r_9 c_2) \land (r_7 c_2 \parallel r_8 c_2) \land (r_7 c_2 \parallel r_9 c_2) \land (r_8 c_2 \parallel r_9 c_2)$\\[3mm]
    $B_3$ &=& \phantom{$\land$ }$(r_1 c_7 \parallel r_1 c_8) \land (r_1 c_7 \parallel r_1 c_9) \land (r_1 c_7 \parallel r_2 c_7) \land (r_1 c_7 \parallel r_2 c_8)$\\
          & &   $\mathstrut\land (r_1 c_7 \parallel r_2 c_9) \land (r_1 c_7 \parallel r_3 c_7) \land (r_1 c_7 \parallel r_3 c_8) \land (r_1 c_7 \parallel r_3 c_9)$\\
          & &   $\mathstrut\land (r_1 c_8 \parallel r_1 c_9) \land (r_1 c_8 \parallel r_2 c_7) \land (r_1 c_8 \parallel r_2 c_8) \land (r_1 c_8 \parallel r_2 c_9)$\\
          & &   $\mathstrut\land (r_1 c_8 \parallel r_3 c_7) \land (r_1 c_8 \parallel r_3 c_8) \land (r_1 c_8 \parallel r_3 c_9) \land (r_1 c_9 \parallel r_2 c_7)$\\
          & &   $\mathstrut\land (r_1 c_9 \parallel r_2 c_8) \land (r_1 c_9 \parallel r_2 c_9) \land (r_1 c_9 \parallel r_3 c_7) \land (r_1 c_9 \parallel r_3 c_8)$\\
          & &   $\mathstrut\land (r_1 c_9 \parallel r_3 c_9) \land (r_2 c_7 \parallel r_2 c_8) \land (r_2 c_7 \parallel r_2 c_9) \land (r_2 c_7 \parallel r_3 c_7)$\\
          & &   $\mathstrut\land (r_2 c_7 \parallel r_3 c_8) \land (r_2 c_7 \parallel r_3 c_9) \land (r_2 c_8 \parallel r_2 c_9) \land (r_2 c_8 \parallel r_3 c_7)$\\
          & &   $\mathstrut\land (r_2 c_8 \parallel r_3 c_8) \land (r_2 c_8 \parallel r_3 c_9) \land (r_2 c_9 \parallel r_3 c_7) \land (r_2 c_9 \parallel r_3 c_8)$\\
          & &   $\mathstrut\land (r_2 c_9 \parallel r_3 c_9) \land (r_3 c_7 \parallel r_3 c_8) \land (r_3 c_7 \parallel r_3 c_9) \land (r_3 c_8 \parallel r_3 c_9)$
  \end{tabular}
  \caption{Formulas $R_1$, $C_2$ and $B_3$}
  \label{sudoku.fig.formulas}
\end{figure}

\section{Sudoku logic: Semantics}
\label{sudoku.sec.Sem}

In any formal deductive system semantics provides a context in which formulas of the system can be interpreted so that
the validity of formulas can be established. This is usually expressed as a relation $\models$ between 
contexts and formulas where
$$
  C \models \alpha
$$
means that the formula $\alpha$ is true in the context $C$. We read this as ``$\alpha$ is true in $C$'',
or ``$\alpha$ holds in $C$'', or ``$C$ is a model of $\alpha$'', or ``$C$ satisfies $\alpha$''.

In case of classic Sudoku contexts will given by (partially filled)
Sudoku grids, while formulas will be Sudoku formulas.

\begin{DEF}
  A \emph{grid} is a $9 \times 9$ matrix
  $$
    \calA = [A_{ij}]_{9\times 9} = \begin{bmatrix}
            A_{11} & A_{12} & A_{13} & A_{14} & A_{15} & A_{16} & A_{17} & A_{18} & A_{19}\\ 
            A_{21} & A_{22} & A_{23} & A_{24} & A_{25} & A_{26} & A_{27} & A_{28} & A_{29}\\
            A_{31} & A_{32} & A_{33} & A_{34} & A_{35} & A_{36} & A_{37} & A_{38} & A_{39}\\
            A_{41} & A_{42} & A_{43} & A_{44} & A_{45} & A_{46} & A_{47} & A_{48} & A_{49}\\
            A_{51} & A_{52} & A_{53} & A_{54} & A_{55} & A_{56} & A_{57} & A_{58} & A_{59}\\
            A_{61} & A_{62} & A_{63} & A_{64} & A_{65} & A_{66} & A_{67} & A_{68} & A_{69}\\
            A_{71} & A_{72} & A_{73} & A_{74} & A_{75} & A_{76} & A_{77} & A_{78} & A_{79}\\
            A_{81} & A_{82} & A_{83} & A_{84} & A_{85} & A_{86} & A_{87} & A_{88} & A_{89}\\
            A_{91} & A_{92} & A_{93} & A_{94} & A_{95} & A_{96} & A_{97} & A_{98} & A_{99}
        \end{bmatrix}
  $$
  where each $A_{ij}$ is a nonempty subset of the set $\{1, 2, \ldots, 9\}$.

  We say that the \emph{cell $r_i c_j$ in $\calA$ is uniquely determined} if $|A_{ij}| = 1$.
  Otherwise it is not uniquely determined.

  If every cell in $\calA$ is uniquely determined then $\calA$ will be referred to as a \emph{full grid}.
\end{DEF}

The idea in this definition is that each $A_{ij}$ represents the set of possible digits the cell $r_i c_j$ can take.
Note that our definition requires that there is at least one possibility for every cell.
Note also that a grid is \emph{not necessarily} a Sudoku grid (because in the definition we do not forbid repetitions of digits
in rows, columns and boxes, for example).

\begin{DEF}\label{sudoku.def.models}
  If $\calA = [A_{ij}]_{9\times 9}$ is a grid and $\phi$ is a Sudoku formula, then we define
  $
    \calA \models \phi
  $
  inductively as follows:
  \begin{itemize}\itemsep -1pt
    \item $\calA \models \top$ is always true; $\calA \models \bot$ is never true;
    \item $\calA \models r_ic_j \not\approx d$ if and only if $d \notin A_{ij}$;
    \item $\calA \models \lnot \alpha$ if and only if it is not the case that $\calA \models \alpha$;
    \item $\calA \models \alpha \land \beta$ if and only if $\calA \models \alpha$ and $\calA \models \beta$;
    \item $\calA \models \alpha \lor \beta$ if and only if $\calA \models \alpha$ or $\calA \models \beta$, or both;
    \item $\calA \models \alpha \Rightarrow \beta$ if and only if $\calA \models \beta$ assuming $\calA \models \alpha$;
    \item $\calA \models \alpha \Leftrightarrow \beta$ if and only if $\calA \models \alpha \Rightarrow \beta$ and $\calA \models \beta \Rightarrow \alpha$.
  \end{itemize}
\end{DEF}

\begin{DEF}\label{sudoku.def.models-set}
  If $\calA$ is a grid and $\Phi$ is a set of Sudoku formulas, then $\calA \models \Phi$ means that
  $\calA \models \phi$ for every $\phi \in \Phi$. We take by definition that $\calA \models \0$ for every grid~$\calA$.
\end{DEF}

The idea behind the fact that every grid models the empty set of Sudoku formulas
is simple: since the empty set of formulas imposes no restrictions on the structure of the grid,
it is satisfied by every grid.

\begin{DEF}
  A grid $\calA$ is a \emph{Sudoku grid} if $\calA \models \Sax$.
\end{DEF}

As a consequence of the definition of $\parallel$ we have that
in a Sudoku grid not all the cells are required to be uniquely determined,
but the unique values that appear in the same row (column and box, respectively) have to be distinct, and
they cannot appear as possible values for cells that are not uniquely determined.
To see why this is so, let $\calA$ be the following Sudoku grid:
\begin{center}
    \def\a{$\substack{2 \; 4 \\ 6 \; 8}$}
    \def\b{$\substack{5 \; 9}$}
    \def\c{$\substack{7 \; 9}$}
    \def\d{$\substack{2 \; 4}$}
    \def\e{$\substack{2 \; 6}$}
    \def\f{$\substack{6 \; 8}$}
    \def\h#1{{\large\textbf{#1}}}
\begin{pgfpicture}
  \pgfsetxvec{\pgfpoint{\acadpgfunit}{0pt}}
  \pgfsetyvec{\pgfpoint{0pt}{\acadpgfunit}}
  \pgfsetlinewidth{\acadpgflinewidth}
  \pgftransformshift{\pgfpointxy{-37.5}{12.5}}

  \begin{pgfscope}
    \pgfsetlinewidth{0.50mm}
    \pgfpathmoveto{\pgfpointxy{150.0}{0.0}}
    \pgfpathlineto{\pgfpointxy{150.0}{250.0}}
    \pgfusepath{stroke}
  \end{pgfscope}
  \begin{pgfscope}
    \pgfsetlinewidth{0.50mm}
    \pgfpathmoveto{\pgfpointxy{150.0}{250.0}}
    \pgfpathlineto{\pgfpointxy{1050.0}{250.0}}
    \pgfusepath{stroke}
  \end{pgfscope}
  \begin{pgfscope}
    \pgfpathmoveto{\pgfpointxy{250.0}{0.0}}
    \pgfpathlineto{\pgfpointxy{250.0}{250.0}}
    \pgfusepath{stroke}
  \end{pgfscope}
  \begin{pgfscope}
    \pgfpathmoveto{\pgfpointxy{350.0}{0.0}}
    \pgfpathlineto{\pgfpointxy{350.0}{250.0}}
    \pgfusepath{stroke}
  \end{pgfscope}
  \begin{pgfscope}
    \pgfsetlinewidth{0.50mm}
    \pgfpathmoveto{\pgfpointxy{450.0}{0.0}}
    \pgfpathlineto{\pgfpointxy{450.0}{250.0}}
    \pgfusepath{stroke}
  \end{pgfscope}
  \begin{pgfscope}
    \pgfpathmoveto{\pgfpointxy{150.0}{150.0}}
    \pgfpathlineto{\pgfpointxy{1050.0}{150.0}}
    \pgfusepath{stroke}
  \end{pgfscope}
  \begin{pgfscope}
    \pgfpathmoveto{\pgfpointxy{550.0}{0.0}}
    \pgfpathlineto{\pgfpointxy{550.0}{250.0}}
    \pgfusepath{stroke}
  \end{pgfscope}
  \begin{pgfscope}
    \pgfpathmoveto{\pgfpointxy{650.0}{0.0}}
    \pgfpathlineto{\pgfpointxy{650.0}{250.0}}
    \pgfusepath{stroke}
  \end{pgfscope}
  \begin{pgfscope}
    \pgfsetlinewidth{0.50mm}
    \pgfpathmoveto{\pgfpointxy{750.0}{0.0}}
    \pgfpathlineto{\pgfpointxy{750.0}{250.0}}
    \pgfusepath{stroke}
  \end{pgfscope}
  \begin{pgfscope}
    \pgfpathmoveto{\pgfpointxy{850.0}{0.0}}
    \pgfpathlineto{\pgfpointxy{850.0}{250.0}}
    \pgfusepath{stroke}
  \end{pgfscope}
  \begin{pgfscope}
    \pgfpathmoveto{\pgfpointxy{950.0}{0.0}}
    \pgfpathlineto{\pgfpointxy{950.0}{250.0}}
    \pgfusepath{stroke}
  \end{pgfscope}
  \begin{pgfscope}
    \pgfsetlinewidth{0.50mm}
    \pgfpathmoveto{\pgfpointxy{1050.0}{0.0}}
    \pgfpathlineto{\pgfpointxy{1050.0}{250.0}}
    \pgfusepath{stroke}
  \end{pgfscope}
  \begin{pgfscope}
    \pgfpathmoveto{\pgfpointxy{150.0}{50.0}}
    \pgfpathlineto{\pgfpointxy{1050.0}{50.0}}
    \pgfusepath{stroke}
  \end{pgfscope}
  \pgftext[at={\pgfpointxy{200.0}{200.0}}]{\h1}
  \pgftext[at={\pgfpointxy{300.0}{200.0}}]{\h3}
  \pgftext[bottom,at={\pgfpointxy{200.0}{262.0}}]{$c_1$}
  \pgftext[bottom,at={\pgfpointxy{300.0}{262.0}}]{$c_2$}
  \pgftext[bottom,at={\pgfpointxy{400.0}{262.0}}]{$c_3$}
  \pgftext[right,at={\pgfpointxy{138.0}{200.0}}]{$r_1$}
  \pgftext[bottom,at={\pgfpointxy{500.0}{262.0}}]{$c_4$}
  \pgftext[bottom,at={\pgfpointxy{600.0}{262.0}}]{$c_5$}
  \pgftext[bottom,at={\pgfpointxy{700.0}{262.0}}]{$c_6$}
  \pgftext[bottom,at={\pgfpointxy{800.0}{262.0}}]{$c_7$}
  \pgftext[bottom,at={\pgfpointxy{900.0}{262.0}}]{$c_8$}
  \pgftext[bottom,at={\pgfpointxy{1000.0}{262.0}}]{$c_9$}
  \pgftext[right,at={\pgfpointxy{138.0}{100.0}}]{$r_2$}
  \pgftext[at={\pgfpointxy{500.0}{100.0}}]{\h7}
  \pgftext[at={\pgfpointxy{900.0}{200.0}}]{\h5}
  \pgftext[at={\pgfpointxy{800.0}{100.0}}]{\h3}
  \pgftext[at={\pgfpointxy{1000.0}{100.0}}]{\h1}
  \pgftext[at={\pgfpointxy{400.0}{200.0}}]{\a}
  \pgftext[at={\pgfpointxy{200.0}{100.0}}]{\a}
  \pgftext[at={\pgfpointxy{300.0}{100.0}}]{\a}
  \pgftext[at={\pgfpointxy{400.0}{100.0}}]{\a}
  \pgftext[at={\pgfpointxy{900.0}{100.0}}]{\a}
  \pgftext[at={\pgfpointxy{600.0}{100.0}}]{\b}
  \pgftext[at={\pgfpointxy{700.0}{100.0}}]{\b}
  \pgftext[at={\pgfpointxy{800.0}{200.0}}]{\c}
  \pgftext[at={\pgfpointxy{1000.0}{200.0}}]{\c}
  \pgftext[at={\pgfpointxy{500.0}{200.0}}]{\d}
  \pgftext[at={\pgfpointxy{600.0}{200.0}}]{\e}
  \pgftext[at={\pgfpointxy{700.0}{200.0}}]{\f}
\end{pgfpicture}
\end{center}
Some cells in this grid are uniquely determined (for example, $A_{12} = \{3\}$)
and others are not (for example, $A_{14} = \{2, 4\}$). Let us show that $A \models R_1$. It is obvious that
$\calA \models r_1c_1 \parallel r_1c_2$ because $|A_{11}| = |A_{12}| = 1$ and the unique values in $A_{11}$ and $A_{12}$
are not equal.
To see that $\calA \models r_1c_1 \parallel r_1c_3$ note that
the cell $r_1c_1$ is uniquely determined and the digit in $r_1c_1$ does not appear among the possible values for $r_1 c_3$.
Finally, $\calA \models r_1c_3 \parallel r_1c_4$ as none of the two cells is uniquely determined.
These three ideas suffice to prove that the remaining 33 Sudoku propositions that are listed in $R_1$ are
also satisfied.

\begin{DEF}
  Let $\Phi$ be a set of Sudoku formulas. Then
  $$
    \models \Phi
  $$
  means that $\calS \models \Phi$ for every full Sudoku grid $\calS$.
\end{DEF}

Let us now specify what a solution to a Sudoku puzzle is.

\begin{DEF}
  Let $\calA = [A_{ij}]_{9 \times 9}$ and $\calB = [B_{ij}]_{9 \times 9}$ be grids.
  We say that \emph{$\calB$ is compatible with $\calA$} if
  $B_{ij} \subseteq A_{ij}$ for all $1 \le i,j \le 9$.
\end{DEF}

\begin{DEF}
  Let $\calA = [A_{ij}]_{9 \times 9}$ and $\calS = [S_{ij}]_{9 \times 9}$ be grids.
  We say that \emph{$\calS$ is a solution for $\calA$} if $\calS$ is a full grid compatible with~$\calA$.
\end{DEF}

\begin{DEF}
  Let $\Phi$ and $\Psi$ be two sets of Sudoku propositions (that is, formulas of the form $r_i c_j \not\approx d$).
  Then
  $$
    \Phi \models \Psi
  $$
  means that for every Sudoku grid $\calA$, if $\calA \models \Phi$ then $\calS \models \Psi$
  for every full Sudoku grid $\calS$ which is a solution of~$\calA$.
  We say that the formulas in $\Psi$ are \emph{semantic consequences of} $\Phi$.
\end{DEF}

\paragraph{Convention.}
Instead of $\Phi \models \{\alpha\}$ we simply write $\Phi \models \alpha$, and instead of
$\{\alpha\} \models \Psi$ we simply write $\alpha \models \Phi$. Also, instead of
$\{\alpha_1, \ldots, \alpha_n\} \models \{\beta_1, \ldots, \beta_m\}$ we simply write
$\alpha_1, \ldots, \alpha_n \models \beta_1, \ldots, \beta_m$.

\bigskip

Note that the above definition overloads the symbol $\models$, and this is standard in logic.
However, there is no danger of confusion: there is a clear distinction between the statement of the form
$$
  \text{grid} \models \text{set of arbitrary Sudoku formulas},
$$
the statement of the form
$$
  \models \text{set of arbitrary Sudoku formulas},
$$
and the statement of the form
$$
  \text{set of Sudoku propositions} \models \text{set of Sudoku propositions}.
$$

\begin{DEF}
  For a Sudoku grid $\calA = [A_{ij}]_{9 \times 9}$ let $\Cns(\calA)$ denote the set of all the
  \emph{Sudoku propositions valid in $\calA$}:
  $$
    \Cns(\calA) = \big\{ r_i c_j \not\approx d : 1 \le i, j, d \le 9 \text{ and } d \notin A_{ij} \}.
  $$
\end{DEF}

In particular, if the cell $r_i c_j$ is uniquely determined and $A_{ij} = \{d\}$ then for all $s \ne d$ we have that
$r_ic_j \not\approx s \in \Cns(\calA)$. If, on the other hand, the cell $r_ic_j$ is not uniquely determined,
the fact that $\calA$ is a Sudoku grid (that is, $\calA \models \Sax$) implies the following:
\begin{itemize}
  \item if there is an $m \ne j$ such that $A_{im} = \{d\}$ then $r_ic_j \not\approx d \in \Cns(\calA)$;
  \item if there is an $\ell \ne i$ such that $A_{\ell j} = \{d\}$ then $r_ic_j \not\approx d \in \Cns(\calA)$;
  \item if there is a pair $(\ell, m) \ne (i, j)$ such that $r_i c_j$ and $r_\ell c_m$ are in the same box
        and $A_{\ell m} = \{d\}$ then $r_ic_j \not\approx d \in \Cns(\calA)$.
\end{itemize}

\begin{LEM}\label{sudoku.lem.SmodelsCnsA}
  Let $\calA$ and $\calB$ be Sudoku grids and assume that $\calB$ is compatible with $\calA$.
  Then:
  
  $(a)$ $\Cns(\calA) \subseteq \Cns(\calB)$.

  $(b)$ $\calB \models \Cns(\calA)$.

  $(c)$ If $\calA \models r_ic_j \equiv d$ then $\calB \models r_ic_j \equiv d$.
\end{LEM}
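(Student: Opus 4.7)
The plan is to unwind the definitions and use compatibility $S_{ij} \subseteq A_{ij}$ as the single driving mechanism. No clever step is required; part (a) is the crux, and parts (b) and (c) follow almost immediately.

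For part (a), I would argue directly. Take an arbitrary Sudoku proposition $r_ic_j \not\approx d \in \Cns(\calA)$. By the definition of $\Cns(\calA)$, this means $d \notin A_{ij}$. Compatibility gives $S_{ij} \subseteq A_{ij}$, hence $d \notin S_{ij}$, and so $r_ic_j \not\approx d \in \Cns(\calS)$ by definition of $\Cns(\calS)$. Note that this step uses neither the Sudoku axioms $\Sax$ nor the fact that $\calS$ is full; it is just about set inclusion at each cell.

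For part (b), I would combine (a) with the semantic clause from Definition~\ref{sudoku.def.models}. Pick any $r_ic_j \not\approx d \in \Cns(\calA)$; by (a) it lies in $\Cns(\calS)$, which by definition of $\Cns(\calS)$ means $d \notin S_{ij}$, which by the semantic clause is exactly $\calS \models r_ic_j \not\approx d$. Since this holds for every formula in $\Cns(\calA)$, Definition~\ref{sudoku.def.models-set} gives $\calS \models \Cns(\calA)$.

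For part (c), I would unfold the shorthand $r_ic_j \equiv d := \bigwedge_{s \ne d} r_ic_j \not\approx s$. The assumption $\calA \models r_ic_j \equiv d$ then means $s \notin A_{ij}$ for every $s \ne d$, i.e., $A_{ij} \subseteq \{d\}$. Because grids are required to have nonempty cells, $A_{ij} = \{d\}$. Compatibility gives $S_{ij} \subseteq \{d\}$, and again nonemptiness forces $S_{ij} = \{d\}$, so $s \notin S_{ij}$ for all $s \ne d$, i.e., $\calS \models r_ic_j \equiv d$.

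There is no real obstacle here; the only subtlety worth flagging is the nonemptiness of cells in a grid, which is what prevents the implication in (c) from being vacuous and forces $S_{ij}$ actually to equal $\{d\}$ rather than merely being contained in it. Everything else is bookkeeping between the set-theoretic definition of $\Cns$ and the semantic clauses.
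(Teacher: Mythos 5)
Your proof is correct and follows essentially the same route as the paper: part (a) is the direct unwinding of $\Cns$ via the cell-wise inclusion $S_{ij} \subseteq A_{ij}$, and (b) and (c) are immediate consequences of (a) together with Definition~\ref{sudoku.def.models} (the paper simply states this without spelling out the details you provide). Your remark about nonemptiness in (c) is harmless but not actually needed, since $\calS \models r_ic_j \equiv d$ only requires $s \notin S_{ij}$ for all $s \ne d$, which already follows from $S_{ij} \subseteq A_{ij} \subseteq \{d\}$.
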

\begin{proof}
  $(a)$ Let $\calA = [A_{ij}]_{9 \times 9}$ and $\calB = [S_{ij}]_{9 \times 9}$.
  Take any formula $r_i c_j \not\approx d \in \Cns(\calA)$. Then $d \notin A_{ij}$ by the definition of $\Cns(\calA)$.
  Since $\calB$ is compatible with $\calA$, we also know that $S_{ij} \subseteq A_{ij}$. Therefore,
  $d \notin S_{ij}$ and thus $r_i c_j \not\approx d  \in \Cns(\calB)$.

  $(b)$ and $(c)$ follow immediately from $(a)$.
\end{proof}

As a demonstration of the theory developed so far, we propose a series of exercises which validate
the X-wing rule. Let $\XRow(i, j, m, d)$ denote the following set of Sudoku propositions:
$$
  \XRow(i, j, m, d) = \{ r_i c_s \not\approx d : s \ne j, m \}
$$
which claims that in row $i$ the digit $d$ can occur in none but two cells, $r_i c_j$ and $r_i c_m$.
Analogously, let $\XCol(j, i, \ell, d)$ denote the following set of Sudoku propositions:
$$
  \XCol(j, i, \ell, d) = \{ r_t c_j \not\approx d : t \ne i, \ell \}
$$
which encodes an analogous claim for columns.
Then the X-wing rule for rows is the following claim:

\begin{quote}
  \textbf{X-wing rule for rows.}
  Fix $1 \le i, j, \ell, m, d \le 9$ so that $i \ne \ell$ and $j \ne m$. Then
  $$
    \XRow(i, j, m, d) \cup \XRow(\ell, j, m, d) \models \XCol(j, i, \ell, d) \cup \XCol(m, i, \ell, d).
  $$
\end{quote}

\begin{ZAD}
  Prove the X-wing rule for rows. In other words, prove the following: if $\calA$ is a Sudoku grid such that
  $\calA \models \XRow(i, j, m, d) \cup \XRow(\ell, j, m, d)$ and if $\calS$ is a Sudoku grid which is a
  solution of $\calA$ then $\calS \models \XCol(j, i, \ell, d) \cup \XCol(m, i, \ell, d)$.
\end{ZAD}

\begin{ZAD}
  Formulate and prove the X-wing rule for columns.
\end{ZAD}

\begin{ZAD}
  Formulate and prove the jellyfish rule for rows and columns.
\end{ZAD}

\section{Deducibility}
\label{sudoku.sec.Deduc}

A deduction is a highly formalized procedure of establishing that a particular piece of information
must be a logical consequence of certain assumptions. In every deduction we start from universally accepted truths
called \emph{axioms}, and then starting from a set of \emph{premisses} we then produce a sequence
of intermediate statements that lead to a \emph{conclusion}. Along the way, we obtain new statements from those
that appear prior to it in the deduction using some prescribed \emph{rules of deduction}. 

Usually there are two kinds of axioms: logical axioms and domain-specific axioms.
\emph{Logical axioms} are universally true across mathematics. They govern our reasoning and
ensure that in every part of mathematics we use the same rigorous procedures, so that conclusions we make in euclidean geometry
are as reliable as conclusions we make in number theory.

Using logical axioms only nothing can be said about euclidean geometry, say. What is a straight line? What is a point?
In what ways can a straight line and a point interact? None of this is encoded in logical axioms.
The choice of \emph{domain-specific axioms} tells us what is the mathematical theory we are working in.
Their principal task is to introduce the fundamental objects of the theory (such as points and lines) and
their defining properties (such as: for every pair of distinct points there is precisely one line that passes through both of them).

Finally, problem-specific premisses are the assumptions about a particular configuration for which
we are trying to infer new properties.

For example, in the proof of the Pythagorean Theorem:
\begin{quote}
  \textbf{Pythagorean Theorem.} \textsl{In every right-angled triangle, the square of the length of one side of the triangle
  equals the sum of squares of lengths of the other two sides.}
\end{quote}
\noindent
the logical axioms are the axioms of mathematical logic (these are usually taught at schools
and universities as ``proof strategies'' or ``proof techniques'').
The domain-specific axioms are the axioms of euclidean geometry. This is not specified explicitly in the
statement of the theorem, but anyone can safely infer that from the historical context. Finally,
problem-specific assumptions are that we are given a triangle in which one of the angles is right.

The \emph{logical axioms} of Sudoku logic are all Sudoku formulas that are derived from tautologies.
A \emph{tautology} is a universally true propositional formula, such as
$$
  (\lnot q \Rightarrow \lnot p) \Rightarrow (p \Rightarrow q) \quad \text{and}\quad p \Rightarrow(q \Rightarrow p \land q).
$$
A \emph{Sudoku formula derived from a tautology} is any formula that we can obtain from a tautology by replacing
letters with arbitrary Sudoku propositions. For example, the following two Sudoku axioms are examples of
logical Sudoku axioms derived from the above two tautologies:
\begin{gather*}
  (\lnot(r_1 c_2 \not\approx 4) \Rightarrow \lnot(r_1 c_3 \not\approx 7)) \Rightarrow (r_1 c_3 \not\approx 7 \Rightarrow r_1 c_2 \not\approx 4) \\
  \text{and}\quad r_1 c_2 \not\approx 4 \Rightarrow(r_1 c_2 \not\approx 5 \Rightarrow r_1 c_2 \not\approx 4 \land r_1 c_2 \not\approx 5).
\end{gather*}

A \emph{Sudoku axiom} is a Sudoku formula which holds in every full Sudoku grid.
Since tautologies are universally true, every Sudoku formula derived from a tautology is also a Sudoku axiom.

Let $\Dax$ denote the set of all Sudoku axioms.
The set $\Dax$ contains $\Sax$ and much more.
For example, each of the formulas
$$
  \qquad \lnot(r_i c_j \not\approx d) \Leftrightarrow r_i c_j \equiv d,
$$
where $1 \le i, j, d \le 9$, is also a Sudoku axiom since it is true in every full Sudoku grid.
Note that there are Sudoku grids where such a formula fails, but they all hold in every \emph{full} Sudoku grid.

In Sudoku logic we use only one \emph{deduction rule -- modus ponens}:
$$
  \DedRuleII{\alpha}{\alpha \Rightarrow \beta}{\beta}{1cm}
$$
This depiction means that if we can logically deduce $\alpha$ and if we can logically deduce $\alpha \Rightarrow \beta$, then
we have the license to logically deduce~$\beta$.

\begin{DEF}
  Let $\Phi$ be a set of Sudoku formulas and let $\alpha$ be a Sudoku formula. We say that \emph{$\alpha$ is logically deducible from $\Phi$}
  and write $\Phi \vdash \alpha$ if we can write down a sequence
  \begin{equation}\label{sudoku.eq.proof}
    \phi_1, \; \phi_2, \; \ldots, \; \phi_n
  \end{equation}
  of formulas such that $\phi_n = \alpha$ and for every $1 \le i \le n$:
  \begin{itemize}\itemsep -1pt
    \item $\phi_i$ is a Sudoku axiom (that is, $\phi_i \in \Dax$), or
    \item $\phi_i$ is a \emph{premise} (or \emph{assumption}; that is, $\phi_i \in \Phi$), or
    \item there exist $1 \le p, q < i$ such that $\phi_i$ can be deduced from $\phi_p$ and $\phi_q$ using modus ponens
          (in other words, $\phi_q$ is $\phi_p \Rightarrow \phi_i$).
  \end{itemize}
  In that case $\Phi$ is the set of \emph{premisses}, $\alpha$ is the \emph{consequence} or \emph{conclusion}, and
  the sequence \eqref{sudoku.eq.proof} is the \emph{proof of $\alpha$ from the premisses in $\Phi$}.

  If $\Phi = \0$ is the empty set of premisses, instead of $\0 \vdash \alpha$ we shall simply write $\vdash \alpha$.
  This means that $\alpha$ is a \emph{Sudoku theorem} because it follows only from the Sudoku axioms. As usual,
  we write $\vdash \Phi$ if $\vdash \alpha$ for all $\alpha \in \Phi$.
\end{DEF}

A deduction system (axioms together with deduction rules) makes sense if it is \emph{sound}.
This means that everything we logically deduce within the system must be true ``in reality''.
We shall now prove that the deduction system of Sudoku logic is sound.

\begin{LEM}\label{sudoku.lem.s-phi-alpha}
    Let $\calS$ be a full Sudoku grid, let $\Phi$ be a set of Sudoku formulas and let $\alpha$ be a Sudoku formula.
    If $\calS \models \Phi$ and $\Phi \vdash \alpha$ then $\calS \models \alpha$.
\end{LEM}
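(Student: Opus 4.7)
The plan is to prove this by induction on the length $n$ of a proof of $\alpha$ from $\Phi$. More precisely, I would strengthen the statement slightly (which will be convenient for the induction) by showing that if $\phi_1, \phi_2, \ldots, \phi_n$ is a proof of $\alpha$ from $\Phi$, then $\calS \models \phi_i$ for every $1 \le i \le n$; taking $i = n$ then gives $\calS \models \alpha$.

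For the inductive step I consider the three ways a formula $\phi_i$ can appear in a proof. If $\phi_i \in \Dax$, then $\phi_i$ is by definition a Sudoku axiom, so it holds in every full Sudoku grid, and in particular $\calS \models \phi_i$. If $\phi_i \in \Phi$, then $\calS \models \phi_i$ follows directly from the hypothesis $\calS \models \Phi$ together with Definition~\ref{sudoku.def.models-set}. If $\phi_i$ is obtained by modus ponens from $\phi_p$ and $\phi_q = (\phi_p \Rightarrow \phi_i)$ with $p, q < i$, then by the induction hypothesis $\calS \models \phi_p$ and $\calS \models \phi_p \Rightarrow \phi_i$; by the semantic clause for $\Rightarrow$ in Definition~\ref{sudoku.def.models} this forces $\calS \models \phi_i$.

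The main conceptual point I want to make sure I get right is the first case: why every formula in $\Dax$ is true in every full Sudoku grid. This is immediate from how $\Dax$ was defined in the preceding discussion, namely as the collection of all Sudoku formulas that hold in every full Sudoku grid (this set includes, in particular, all Sudoku formulas derived from propositional tautologies, since tautologies are true under every truth assignment, hence in particular under the assignment determined by $\calS$ through the clauses of Definition~\ref{sudoku.def.models}).

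There is no real obstacle here; the argument is a completely routine structural induction on proofs, of the kind familiar from the soundness proof for classical propositional logic. The only subtle point, which is really just a bookkeeping matter, is to keep the universally quantified statement about all $\phi_i$ (rather than just about $\alpha$) during the induction, so that the modus ponens step has both $\calS \models \phi_p$ and $\calS \models \phi_q$ available from the hypothesis.
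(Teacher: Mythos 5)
Your proposal is correct and follows essentially the same route as the paper: induction on the proof sequence with the three cases (axiom, premise, modus ponens), using that Sudoku axioms hold in every full Sudoku grid by definition and the semantic clause for $\Rightarrow$ for the modus ponens step. Your variant of carrying the claim for all $\phi_i$ simultaneously is just a cleaner bookkeeping of the same induction the paper performs on proof length.
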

\begin{proof}
    Assume that $\calS \models \Phi$ and $\Phi \vdash \alpha$. Then there is a proof
    $$
        \phi_1, \; \phi_2, \; \ldots, \; \phi_n
    $$
    of $\alpha$. We shall prove that $\calS \models \alpha$ using induction on $n$ -- the length of the proof.

    If $n = 1$ then $\alpha$ is a Sudoku axiom or $\alpha \in \Phi$. In any case, $\calS \models \alpha$.

    Assume that the statement is true for every formula $\phi$ whose proof is shorter than~$n$.
    If $\alpha$ is an axiom or $\alpha \in \Phi$ then obviously $\calS \models \alpha$.
    Assume, therefore, that there exist $1 \le p, q < n$ such that $\phi_i$ can be deduced from $\phi_p$ and $\phi_q$ using modus ponens.
    Then, without loss of generality, we may also assume that $\phi_q$ is $\phi_p \Rightarrow \phi_n$.
    Since $p, q < n$ the induction hypothesis yields that $\calS \models \phi_p$ and $\calS \models \phi_p \Rightarrow \phi_n$.
    By Definition~\ref{sudoku.def.models} the statement $\calS \models \phi_p \Rightarrow \phi_n$ means that we can conclude
    $\calS \models \phi_n$ provided we know that $\calS \models \phi_p$ holds, which we do! Therefore, $\calS \models \phi_n$,
    that is, $\calS \models \alpha$.
  \end{proof}

\begin{THM}[Soundness]\label{sudoku.thm.soundness}
  Let $\Phi$ and $\Psi$ be two sets of Sudoku propositions. If $\Phi \vdash \Psi$ then
  $\Phi \models \Psi$.
\end{THM}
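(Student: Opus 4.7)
The plan is to reduce the statement to the already-proved Lemma~\ref{sudoku.lem.s-phi-alpha}, whose content is exactly that modus ponens transports $\calS$-validity from the premisses to the conclusion along a formal proof. The bridge we need is that whenever $\calA \models \Phi$ for a Sudoku grid $\calA$, \emph{every} full Sudoku grid $\calS$ which is a solution of $\calA$ also satisfies $\Phi$; once this is in place, the rest is entirely mechanical.

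First I would unfold the definition of $\Phi \models \Psi$: fix an arbitrary Sudoku grid $\calA$ with $\calA \models \Phi$, fix an arbitrary full Sudoku grid $\calS$ that is a solution of $\calA$, fix an arbitrary $\psi \in \Psi$, and reduce the goal to $\calS \models \psi$. Then, still at the level of set-ups, I would replace the quantifier over $\Psi$ by the observation that $\Phi \vdash \Psi$ unfolds to $\Phi \vdash \psi$ for each $\psi \in \Psi$, so for the fixed $\psi$ above we have a formal proof witnessing $\Phi \vdash \psi$.

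Next I would verify the bridge $\calS \models \Phi$. This is where the hypothesis that $\Phi$ consists of \emph{Sudoku propositions} (formulas $r_ic_j \not\approx d$) is essential. For any such $\phi = r_ic_j \not\approx d$ in $\Phi$, $\calA \models \phi$ means $d \notin A_{ij}$, i.e.\ $\phi \in \Cns(\calA)$. Thus $\Phi \subseteq \Cns(\calA)$, and Lemma~\ref{sudoku.lem.SmodelsCnsA}(b) (applied to the pair $\calA, \calS$, using that $\calS$ is compatible with $\calA$ and is itself a Sudoku grid) yields $\calS \models \Cns(\calA)$, hence $\calS \models \Phi$. Finally, with $\calS$ a full Sudoku grid satisfying $\calS \models \Phi$ and with $\Phi \vdash \psi$, Lemma~\ref{sudoku.lem.s-phi-alpha} delivers $\calS \models \psi$ directly. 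Since $\psi \in \Psi$ was arbitrary, $\calS \models \Psi$, and since $\calA$ and $\calS$ were arbitrary, $\Phi \models \Psi$.

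The only real content is the bridge step, and it is not so much a difficulty as a point that must be noticed: soundness of Sudoku logic as stated here concerns the \emph{propositional} fragment (sets of Sudoku propositions), which is exactly the fragment whose truth is monotone under restriction from $\calA$ to a compatible solution $\calS$. If $\Phi$ were allowed to contain arbitrary Sudoku formulas this monotonicity would fail (negated propositions can gain truth under restriction), so one should mention that the restriction on $\Phi$ is used precisely here. Everything else is a direct appeal to the two earlier lemmas.
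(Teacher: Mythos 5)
Your proposal is correct and follows essentially the same route as the paper's own proof: fix $\calA \models \Phi$ and a solution $\calS$, use Lemma~\ref{sudoku.lem.SmodelsCnsA}~$(b)$ together with $\Phi \subseteq \Cns(\calA)$ to get $\calS \models \Phi$, then apply Lemma~\ref{sudoku.lem.s-phi-alpha}. Your added remark that the restriction of $\Phi$ to Sudoku propositions is what makes the bridge step work is a point the paper leaves implicit, but the argument itself is identical.
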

\begin{proof}
    Let $\calA$ be a Sudoku grid such that $\calA \models \Phi$ and let $\calS$ be a solution of~$\calA$.
    Take any $\alpha \in \Psi$.
    Then $\calS \models \Cns(\calA)$ by Lemma~\ref{sudoku.lem.SmodelsCnsA}~$(b)$. Since $\calA \models \Phi$ is another
    way of saying that $\Phi \subseteq \Cns(\calA)$ it follows that $\calS \models \Phi$.
    Now, $\calS \models \Phi$ and $\Phi \vdash \alpha$ gives us that $\calS \models \alpha$ by the lemma above.
    This holds for every $\alpha \in \Psi$, so $\calS \models \Psi$.
\end{proof}

\emph{Completeness} of a deductive system is a complementary requirement which ensures that everything
which is ``semantically justifiable'' can also be logically deduced. More precisely:

\begin{THM}[Completeness]\label{sudoku.thm.completeness}
  Let $\Phi$ and $\Psi$ be two sets of Sudoku propositions. If $\Phi \models \Psi$ then
  $\Phi \vdash \Psi$.
\end{THM}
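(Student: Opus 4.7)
The plan is to reduce Sudoku-logic completeness to the well-known completeness of classical propositional logic. Regard each Sudoku proposition $r_ic_j \not\approx d$ as a propositional variable, so Sudoku formulas coincide with propositional formulas over the finite set of $9^3 = 729$ variables. Any truth assignment $v$ on these variables encodes a grid $\calA_v$ via $A_{ij} = \{d : v(r_ic_j \not\approx d) \text{ is false}\}$ (provided every cell comes out nonempty), and Definition~\ref{sudoku.def.models} is arranged precisely so that $\calA_v \models \phi$ iff $v$ satisfies $\phi$ propositionally. Sudoku-deducibility $\Phi \vdash \alpha$ is, in these terms, exactly propositional deducibility of $\alpha$ from $\Phi \cup \Dax$ using modus ponens and tautologies, so the task reduces to showing: if $\Phi \models \Psi$ then every propositional model of $\Phi \cup \Dax$ is a model of $\Psi$.

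I would first reformulate the hypothesis. Using Lemma~\ref{sudoku.lem.SmodelsCnsA}, for sets $\Phi, \Psi$ of Sudoku propositions the statement $\Phi \models \Psi$ is equivalent to: every full Sudoku grid $\calS$ with $\calS \models \Phi$ satisfies $\calS \models \Psi$. One direction uses that a full Sudoku grid is its own solution; the other uses part $(b)$ of the lemma to lift $\calA \models \Phi$ on an arbitrary Sudoku grid $\calA$ to $\calS \models \Phi$ on any of its full solutions. Thus I may restrict attention to full Sudoku grids only.

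Next I would show that $\Dax$ is categorical for full Sudoku grids: a truth assignment $v$ satisfies $\Dax$ iff $\calA_v$ encodes a full Sudoku grid. The nontrivial direction rests on three families of formulas, each of which holds in every full Sudoku grid and therefore belongs to $\Dax$ by definition: the set $\Sax$ itself (forcing $\calA_v \models \Sax$), the ``at least one digit'' formulas $\lnot(r_ic_j \not\approx 1 \land \cdots \land r_ic_j \not\approx 9)$ (forcing $A_{ij} \ne \varnothing$), and the ``at most one digit'' formulas $r_ic_j \not\approx d_1 \lor r_ic_j \not\approx d_2$ for $d_1 \ne d_2$ (forcing $|A_{ij}| \le 1$). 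Combined, these force $|A_{ij}| = 1$ for every $(i,j)$ and $\calA_v \models \Sax$, so $\calA_v$ is a full Sudoku grid; the converse is immediate from the definition of a Sudoku axiom. Once categoricity is in place, ``$\Phi \models \Psi$'' translates into ``every $v$ with $v \models \Phi \cup \Dax$ satisfies $v \models \Psi$'', and classical propositional completeness — trivial here, since only finitely many variables occur and no appeal to compactness is needed — delivers $\Phi \vdash \alpha$ for each $\alpha \in \Psi$. The main obstacle I anticipate is the categoricity step: one must carefully isolate axioms that are genuinely in $\Dax$ (true in \emph{every} full Sudoku grid) and strong enough to pin the propositional model down to a full Sudoku grid.
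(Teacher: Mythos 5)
Your proof is correct, but it takes a genuinely different route from the paper's. The paper re-runs the standard Henkin--Lindenbaum machinery inside Sudoku logic: it proves Semantic and Syntactic Deduction Theorems (Theorems~\ref{sudoku.thm.semded} and~\ref{sudoku.thm.syntded}), extends any consistent set to a complete consistent Sudoku theory (Lemma~\ref{sudoku.lem.8}), shows that every complete consistent Sudoku theory equals $\Th(\calS)$ for some full Sudoku grid $\calS$ (Lemma~\ref{sudoku.lem.7}), and then assembles these pieces using the finiteness of $\Phi$. You instead reduce the whole problem to classical propositional completeness over the $9^3$ variables, the one substantive step being the categoricity of $\Dax$: an assignment satisfies $\Dax$ exactly when it encodes a full Sudoku grid. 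The two arguments meet at precisely this point --- your three axiom families ($\Sax$, ``at least one digit'', ``at most one digit'') are exactly the axioms invoked in Steps 1 and 2 of the proof of Lemma~\ref{sudoku.lem.7}, there written with $\approx$ as $r_ic_j \approx 1 \lor \cdots \lor r_ic_j \approx 9$ and $\lnot(r_ic_j \approx d \land r_ic_j \approx e)$. What your route buys: it is shorter; in a system where every substitution instance of a tautology is an axiom, strong completeness for a finite premise set is a one-line consequence of the deduction theorem (and $\Phi$, $\Psi$ and the needed fragment of $\Dax$ are all finite here, so compactness is indeed never required); and it isolates a concrete finite subset of $\Dax$ from which, together with the tautologies, every other Sudoku axiom is derivable --- which essentially answers Problem~\ref{sudoku.prob.completeness}. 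What the paper's route buys: it is self-contained, resting on no imported completeness theorem, and its Lindenbaum construction would survive generalizations with infinitely many propositions, where your finiteness shortcuts would have to be replaced by compactness. The only point to make explicit in a final write-up is that assignments leaving some cell empty satisfy no ``at least one digit'' axiom, so they are harmlessly excluded from the models of $\Dax$ even though they encode no grid at all.
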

\begin{proof}
  See Appendix~\ref{sudoku.sec.app-A}.
\end{proof}

\begin{COR}
  Let $\Phi$ and $\Psi$ be two sets of Sudoku propositions. Then $\Phi \vdash \Psi$ if and only if
  $\Phi \models \Psi$.
\end{COR}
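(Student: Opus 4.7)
The plan is to observe that this corollary is precisely the conjunction of the two immediately preceding theorems, so no new work is required beyond invoking them. Specifically, the corollary asserts the two-way equivalence $\Phi \vdash \Psi \iff \Phi \models \Psi$ for sets $\Phi, \Psi$ of Sudoku propositions, and each direction of this biconditional is exactly one of Theorem~\ref{sudoku.thm.soundness} and Theorem~\ref{sudoku.thm.completeness}.

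First, I would handle the forward direction. Assume $\Phi \vdash \Psi$. By Theorem~\ref{sudoku.thm.soundness} (Soundness), applied verbatim to the sets $\Phi$ and $\Psi$ of Sudoku propositions, we conclude $\Phi \models \Psi$. There is nothing to check: the hypotheses of the corollary match the hypotheses of the soundness theorem exactly, and so does the conclusion.

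For the converse direction, I would assume $\Phi \models \Psi$. By Theorem~\ref{sudoku.thm.completeness} (Completeness), again applied directly, we obtain $\Phi \vdash \Psi$. Combining the two implications gives the stated equivalence, which finishes the proof.

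There is really no obstacle in this step of the paper; the substantive work is hidden in the proofs of soundness (already carried out in the main text via Lemma~\ref{sudoku.lem.s-phi-alpha}) and completeness (deferred to Appendix~\ref{sudoku.sec.app-A}). The role of the corollary is simply to package these two results as a single ``sound and complete'' statement, which is the form in which it will be convenient to use in the subsequent sections on symmetry and the Uniqueness Controversy. So my ``proof'' is a one-line deduction: ``Follows immediately from Theorem~\ref{sudoku.thm.soundness} and Theorem~\ref{sudoku.thm.completeness}.''
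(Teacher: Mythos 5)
Your proof is correct and matches the paper's intent exactly: the corollary is stated without a proof body precisely because it is the immediate conjunction of Theorem~\ref{sudoku.thm.soundness} (soundness, giving $\Phi \vdash \Psi \Rightarrow \Phi \models \Psi$) and Theorem~\ref{sudoku.thm.completeness} (completeness, giving the converse). Nothing further is needed.
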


Our set of Sudoku axioms is very large. For purely aesthetic reasons we are interested in a smaller set
of axioms from which every Sudoku axiom cam be deduced. We pose this as an open problem.

\begin{PROB}\label{sudoku.prob.completeness}
  Find a reasonable set of Sudoku axioms $\calA x \subseteq \Dax$ such that every other Sudoku axiom can be
  logically deduced from $\calA x$.
\end{PROB}

Now that we have set up the logical system and precisely defined the deduction process in Sudoku logic,
we can rigorously define logically deducible solutions of Sudoku grids and prove the first main result of the paper:
if a grid has a logically deducible solution, then this must be the unique solution of the grid;
conversely, if a grid has a unique solution, then this solution must be logically deducible.
So, uniqueness and deducibility are the same thing!

\begin{DEF}
  Let $\calA$ and $\calB$ be Sudoku grids. We say that $\calB$ is \emph{logically deducible} from $\calA$
  if $\Cns(\calA) \vdash \Cns(\calB)$.

  In particular, if $\calS$ is a solution of $\calA$ such that $\Cns(\calA) \vdash \Cns(\calS)$
  we say that $\calS$ is a \emph{logically deducible solution of $\calA$}. 
\end{DEF}

Let us now show that if a Sudoku grid has a logically deducible solution, then this is the unique solution.

\begin{THM}[Deducibility implies uniqueness]\label{sudoku.thm.deduc=>uniq}
  Let $\calA$ and $\calS$ be Sudoku grids.
  If $\calS$ is a logically deducible solution of $\calA$ then it is the only solution of $\calA$.
\end{THM}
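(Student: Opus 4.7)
The plan is to show that every solution $\calS'$ of $\calA$ must coincide with $\calS$. The key engine is the Soundness Theorem (Theorem~\ref{sudoku.thm.soundness}), which immediately converts the hypothesis $\Cns(\calA) \vdash \Cns(\calS)$ into the semantic statement $\Cns(\calA) \models \Cns(\calS)$.

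First I would record the trivial observation $\calA \models \Cns(\calA)$: by its very definition, $\Cns(\calA)$ is exactly the set of Sudoku propositions true in $\calA$. Now I would unfold the definition of $\Cns(\calA) \models \Cns(\calS)$: for every Sudoku grid modelling $\Cns(\calA)$, every full Sudoku grid that is a solution of it must satisfy $\Cns(\calS)$. Taking this grid to be $\calA$ itself and the solution to be the arbitrary solution $\calS'$, I obtain $\calS' \models \Cns(\calS)$.

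The final step is to see that $\Cns(\calS)$ pins down the full grid $\calS$ cell by cell. Since $\calS$ is full, at each cell $r_i c_j$ there is a unique digit $s_{ij}$ with $S_{ij} = \{s_{ij}\}$, so $\Cns(\calS)$ contains $r_i c_j \not\approx d$ for every $d \ne s_{ij}$. The conclusion $\calS' \models \Cns(\calS)$ then forces $S'_{ij} \subseteq \{s_{ij}\}$, and the requirement that $S'_{ij}$ be nonempty (part of the definition of a grid) upgrades this to $S'_{ij} = \{s_{ij}\} = S_{ij}$. Since this holds at every cell, $\calS' = \calS$.

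I do not anticipate a genuine obstacle: the substantive work has already been done by Soundness, and the rest is just the observation that a full grid is recovered uniquely from its set of valid negative propositions. The whole theorem is really the claim that Soundness plus this recovery observation together imply uniqueness of the solution.
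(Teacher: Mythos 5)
Your proposal is correct and follows essentially the same route as the paper: soundness converts $\Cns(\calA) \vdash \Cns(\calS)$ into a semantic statement, an arbitrary solution $\calS'$ of $\calA$ is then shown to satisfy $\Cns(\calS)$, and the negative propositions in $\Cns(\calS)$ together with nonemptiness of the cells force $\calS' = \calS$. The paper phrases the last step as a contradiction at a single differing cell and reaches $\calS' \models \Cns(\calS)$ via $\calS' \models \Cns(\calA)$ rather than via $\calA \models \Cns(\calA)$, but these are cosmetic differences.
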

\begin{proof}
  Let $\calS$ be a Sudoku grid which is a logically deducible solution of $\calA$ and assume that 
  there is a Sudoku grid $\calS' \ne \calS$ which is another solution of $\calA$.
  Because $\calS' \ne \calS$ we can find $i$ and $j$ such that $S_{ij} \ne S'_{ij}$.
  Let $S_{ij} = \{d\}$ and $S'_{ij} = \{e\}$ where $d \ne e$.
  Then:
  \begin{equation}\label{sudoku.eq.ricjnotapproxe}
    r_i c_j \not\approx e \in \Cns(\calS).
  \end{equation}
  Since $\calS'$ is a solution of $\calA$ we have that
  $$
    \calS' \models \Cns(\calA)
  $$
  by Lemma~\ref{sudoku.lem.SmodelsCnsA}~$(b)$. On the other hand, $\calS$ is a logically deducible solution of $\calA$
  so, by definition,
  $$
    \Cns(\calA) \vdash \Cns(\calS).
  $$
  Since our deductive system is sound (Theorem~\ref{sudoku.thm.soundness}), it follows that
  $$
    \calS' \models \Cns(\calS).
  $$
  This and \eqref{sudoku.eq.ricjnotapproxe} imply
  $$
    \calS' \models r_i c_j \not\approx e
  $$
  which means that $e \notin S'_{ij}$. Contradiction.
\end{proof}

\begin{COR}
  If a Sudoku grid $\calA$ has several distinct solutions, none of them is logically deducible.
\end{COR}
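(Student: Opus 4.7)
The plan is to derive the corollary directly from Theorem~\ref{sudoku.thm.deduc=>uniq} by a one-line contrapositive argument. Specifically, I would argue as follows: suppose, for contradiction, that $\calA$ has several distinct solutions $\calS_1, \calS_2, \ldots$ but that some solution $\calS_k$ among them is logically deducible from $\calA$, i.e.\ $\Cns(\calA) \vdash \Cns(\calS_k)$.

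Then $\calS_k$ is a logically deducible solution of $\calA$ in the sense of the definition just given. Applying Theorem~\ref{sudoku.thm.deduc=>uniq} to the pair $(\calA, \calS_k)$ yields that $\calS_k$ is the \emph{only} solution of $\calA$. But this contradicts our assumption that $\calA$ has several distinct solutions (so in particular there is some $\calS_j \ne \calS_k$ that is also a solution of $\calA$). Hence no $\calS_k$ can be logically deducible from $\calA$.

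There is no real obstacle here: the corollary is nothing more than the contrapositive formulation of Theorem~\ref{sudoku.thm.deduc=>uniq}, repackaged as a statement about the case of multiple solutions. The only minor point to be careful about is the quantifier — we must show that \emph{every} solution of $\calA$ fails to be logically deducible, not just one. But this is automatic: the contradiction above is reached for an arbitrary chosen solution $\calS_k$, so the conclusion applies uniformly to all solutions of $\calA$.
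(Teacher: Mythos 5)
Your argument is correct and is exactly the intended one: the paper states this corollary without proof as an immediate consequence of Theorem~\ref{sudoku.thm.deduc=>uniq}, and your contrapositive reading (a deducible solution would have to be the unique solution, contradicting multiplicity) is precisely that consequence. Nothing further is needed.
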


The completeness of the deductive system makes it possible for us to prove the converse of Theorem~\ref{sudoku.thm.deduc=>uniq}:
if a Sudoku grid has a unique solution, then this solution must be logically deducible.
We start with a lemma.

\begin{LEM}\label{sudoku.lem.BLA}
  Let $\calA$ and $\calB$ be Sudoku grids. If $\calB \models \Cns(\calA)$ and if $\calS$ is a solution of $\calB$
  then $\calS$ is a solution of $\calA$.
\end{LEM}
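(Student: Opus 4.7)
The plan is to unpack the definitions and push the propositions in $\Cns(\calA)$ through $\calB$ down to $\calL$. Write $\calA = [A_{ij}]$, $\calB = [B_{ij}]$, $\calL = [L_{ij}]$. Since $\calL$ is a solution of $\calB$, it is automatically a full grid, so only the compatibility condition $L_{ij} \subseteq A_{ij}$ for every $i,j$ needs to be established in order to conclude that $\calL$ is a solution of $\calA$.

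To prove compatibility I would argue cell by cell. Fix $i,j$, let $L_{ij} = \{d\}$ (using the fact that $\calL$ is full), and show $d \in A_{ij}$ by contradiction. If $d \notin A_{ij}$, then by the very definition of $\Cns(\calA)$ we have $r_i c_j \not\approx d \in \Cns(\calA)$. The hypothesis $\calB \models \Cns(\calA)$ then yields $\calB \models r_i c_j \not\approx d$, which by Definition~\ref{sudoku.def.models} means $d \notin B_{ij}$. On the other hand, $\calL$ being a solution of $\calB$ gives $L_{ij} \subseteq B_{ij}$, hence $d \in B_{ij}$ — a contradiction.

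There is really no main obstacle here: the argument is a one-step chase through the definitions of $\Cns(\calA)$, $\models$, and ``solution,'' combined with the fact that a full grid has singleton entries. The only thing to be mildly careful about is not to confuse the two uses of compatibility (of $\calL$ with $\calB$, which is given, and of $\calL$ with $\calA$, which is to be proved), and to remember that $\calL$ inherits fullness from being a solution of $\calB$, so that step need not be re-established.
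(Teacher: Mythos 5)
Your proof is correct and follows essentially the same route as the paper's: the key step in both is that $d \notin A_{ij}$ forces $r_i c_j \not\approx d \in \Cns(\calA)$, hence $\calB \models r_i c_j \not\approx d$ and so $d \notin B_{ij}$. The only cosmetic difference is that the paper establishes $B_{ij} \subseteq A_{ij}$ (compatibility of $\calB$ with $\calA$) and lets transitivity of $\subseteq$ finish the job, whereas you chase the single element of $L_{ij}$ directly using fullness of $\calL$; both are fine.
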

\begin{proof}
  Let $\calA = [A_{ij}]_{9 \times 9}$, $\calB = [B_{ij}]_{9 \times 9}$ and $\calS = [S_{ij}]_{9 \times 9}$
  be Sudoku grids such that $\calB \models \Cns(\calA)$ and $\calS$ is a solution of $\calB$.
  So, $\calS$ is a full grid and $\calS$ is compatible with $\calB$. To complete the proof it suffices to show that
  $\calB$ is compatible with $\calA$. To this end, assume that $d \notin A_{ij}$ for some $1 \le i, j, d \le 9$.
  Then $r_ic_j \not\approx d \in \Cns(\calA)$, so the assumption that $\calB \models \Cns(\calA)$ implies that
  $\calB \models r_ic_j \not\approx d$. Therefore, $d \notin B_{ij}$. We have thus shown that $d \notin A_{ij} \Rightarrow d \notin B_{ij}$,
  whence $B_{ij} \subseteq A_{ij}$. This completes the proof.
\end{proof}

\begin{THM}[Uniqueness implies deducibility]
  If a Sudoku grid $\calA$ has a unique solution, then this is a logically deducible solution of $\calA$.
\end{THM}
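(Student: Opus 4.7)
The plan is to reduce the claim to the semantic side using the Completeness Theorem, and then invoke Lemma~\ref{sudoku.lem.BLA} together with the uniqueness hypothesis. Concretely, let $\calS$ be the unique solution of $\calA$. We want $\Cns(\calA) \vdash \Cns(\calS)$, so by Theorem~\ref{sudoku.thm.completeness} it suffices to prove $\Cns(\calA) \models \Cns(\calS)$.

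Unpacking the definition of $\models$ between sets of Sudoku propositions, I would take an arbitrary Sudoku grid $\calB$ with $\calB \models \Cns(\calA)$ together with an arbitrary full Sudoku grid $\calL$ which is a solution of $\calB$, and argue $\calL \models \Cns(\calS)$. The first step is to apply Lemma~\ref{sudoku.lem.BLA}: from $\calB \models \Cns(\calA)$ and the fact that $\calL$ solves $\calB$ we conclude that $\calL$ is also a solution of $\calA$. Here is where uniqueness enters: since $\calA$ has $\calS$ as its only solution, we must have $\calL = \calS$.

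Once $\calL = \calS$, the conclusion $\calL \models \Cns(\calS)$ is immediate from the very definition of $\Cns$: for any full grid $\calS = [S_{ij}]$ and any proposition $r_i c_j \not\approx d \in \Cns(\calS)$ we have $d \notin S_{ij}$, and by Definition~\ref{sudoku.def.models} this is exactly what $\calS \models r_i c_j \not\approx d$ means. Thus $\calL \models \Cns(\calS)$, so $\Cns(\calA) \models \Cns(\calS)$, and completeness yields $\Cns(\calA) \vdash \Cns(\calS)$, which is the definition of $\calS$ being a logically deducible solution of $\calA$.

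The only real obstacle is that the argument leans on the Completeness Theorem, whose proof has been relegated to the appendix; everything else is a short semantic bookkeeping step built on Lemma~\ref{sudoku.lem.BLA}. I do not anticipate any case analysis or technical calculation in the body of this theorem itself.
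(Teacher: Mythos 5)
Your proposal is correct and follows exactly the same route as the paper: reduce to $\Cns(\calA) \models \Cns(\calS)$ via the Completeness Theorem, use Lemma~\ref{sudoku.lem.BLA} to see that any solution $\calL$ of a grid $\calB$ modelling $\Cns(\calA)$ is a solution of $\calA$, and then invoke uniqueness to force $\calL = \calS$. No differences worth noting.
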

\begin{proof}
  Let $\calS$ be a Sudoku grid which is the unique solution of $\calA$. Let us first show that $\Cns(\calA) \models \Cns(\calS)$.
  Let $\calB$ be any Sudoku grid and let $\calB \models \Cns(\calA)$. Also, let $\calL$ be a solution of $\calB$.
  Then by Lemma~\ref{sudoku.lem.BLA} we get that $\calL$ is a solution of $\calA$. But $\calS$ is the unique
  solution of $\calA$, so it must be the case that $\calL = \calS$. Therefore, $\calL \models \Cns(\calS)$ trivially.
  This completes the proof that $\Cns(\calA) \models \Cns(\calS)$. Theorem~\ref{sudoku.thm.completeness} then
  ensures that $\Cns(\calA) \vdash \Cns(\calS)$, and thus $\calS$ is a logically deducible solution of~$\calA$.
\end{proof}

\begin{COR}[Uniqueness $=$ deducibility]\label{sudoku.cor.quniq=deduc}
  A Sudoku grid has a unique solution if and only if it has a logically deducible solution.
\end{COR}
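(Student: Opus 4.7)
The plan is to obtain this corollary as an immediate consequence of the two theorems immediately preceding it in the section, since the corollary is essentially their conjunction phrased as a biconditional. So I would simply unpack the ``if and only if'' and cite the relevant theorem in each direction.

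For the forward direction, I would assume that a Sudoku grid $\calA$ has a unique solution, call it $\calS$. Then by Theorem (Uniqueness implies deducibility), $\calS$ is a logically deducible solution of $\calA$, so in particular $\calA$ has a logically deducible solution. For the backward direction, I would assume that $\calA$ has a logically deducible solution $\calS$; then Theorem~\ref{sudoku.thm.deduc=>uniq} (Deducibility implies uniqueness) tells us that $\calS$ is the only solution of $\calA$, so $\calA$ has a unique solution.

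There is essentially no obstacle: the genuine work has already been carried out in proving Theorem~\ref{sudoku.thm.deduc=>uniq} (which is elementary given soundness) and Theorem~\ref{sudoku.thm.completeness} together with Lemma~\ref{sudoku.lem.BLA} (which is where completeness actually gets invoked via the proof of the ``Uniqueness implies deducibility'' theorem). The only thing to watch is the trivial quantifier bookkeeping --- making sure that ``$\calA$ has a logically deducible solution'' is unpacked as ``there exists a Sudoku grid $\calS$ which is a logically deducible solution of $\calA$'' so that the hypothesis of Theorem~\ref{sudoku.thm.deduc=>uniq} is literally available, and similarly that ``$\calA$ has a unique solution'' supplies exactly the $\calS$ needed to feed into the other theorem. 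Given this, the proof should fit comfortably in two or three lines.
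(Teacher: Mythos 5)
Your proposal is correct and is exactly how the paper intends the corollary to be read: it is stated without proof immediately after the two theorems ``Deducibility implies uniqueness'' (Theorem~\ref{sudoku.thm.deduc=>uniq}) and ``Uniqueness implies deducibility'', as their conjunction phrased as a biconditional. Your unpacking of the two directions and the quantifier bookkeeping matches the paper's implicit argument.
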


\section{What is a symmetry?}
\label{sudoku.sec.transf}

It is an intrinsic quality of humanity to be able to observe and admire symmetry.
Highly symmetric patterns have always attracted attention not only of artists, but also of
architects and scientists. For example, the abstract understanding of symmetries in
physics started with Einstein who observed in the early 1900's that the symmetries of physical
laws are the primary features of nature. Invariance under abstract symmetries has played
a crucial role in physics ever since.

Mathematics as an abstract theory of structures has a way of understanding and analyzing symmetry which,
as many great ideas, stems from geometry. From the point of view of Euclidean geometry
a plane figure is symmetric if there are nontrivial rigid motions that map the figure onto itself.
For example, the circle is the most symmetric of all (it has infinitely many symmetries),
while a square has only 8 symmetries -- 4 rotations about the center ($0^\circ$, $90^\circ$,
$180^\circ$ and $270^\circ$) and 4 reflections about the horizontal axis, the vertical axis and the two
diagonals:
\begin{center}
\begin{pgfpicture}
  \pgfsetxvec{\pgfpoint{\acadpgfunit}{0pt}}
  \pgfsetyvec{\pgfpoint{0pt}{\acadpgfunit}}
  \pgfsetlinewidth{\acadpgflinewidth}
  \pgftransformshift{\pgfpointxy{52.5892}{-202.834}}

  \begin{pgfscope}
    \pgfsetlinewidth{0.50mm}
    \pgfpathellipse{\pgfpointxy{175.0}{450.0}}{\pgfpointxy{150.0}{0.0}}{\pgfpointxy{0.0}{150.0}}
    \pgfusepath{stroke}
  \end{pgfscope}
  \begin{pgfscope}
    \pgfsetlinewidth{0.50mm}
    \pgfpathmoveto{\pgfpointxy{1050.0}{575.0}}
    \pgfpathlineto{\pgfpointxy{1050.0}{300.0}}
    \pgfusepath{stroke}
  \end{pgfscope}
  \begin{pgfscope}
    \pgfsetlinewidth{0.50mm}
    \pgfpathmoveto{\pgfpointxy{1050.0}{300.0}}
    \pgfpathlineto{\pgfpointxy{1200.0}{300.0}}
    \pgfusepath{stroke}
  \end{pgfscope}
  \begin{pgfscope}
    \pgfsetlinewidth{0.50mm}
    \pgfpathmoveto{\pgfpointxy{1200.0}{300.0}}
    \pgfpathlineto{\pgfpointxy{1050.0}{575.0}}
    \pgfusepath{stroke}
  \end{pgfscope}
  \begin{pgfscope}
    \pgfpathmoveto{\pgfpointxy{15.901}{609.099}}
    \pgfpathlineto{\pgfpointxy{334.099}{290.901}}
    \pgfusepath{stroke}
  \end{pgfscope}
  \begin{pgfscope}
    \pgfpathmoveto{\pgfpointxy{175.0}{450.0}}
    \pgfpathlineto{\pgfpointxy{343.167}{599.482}}
    \pgfusepath{stroke}
  \end{pgfscope}
  \begin{pgfscope}
    \pgfpathmoveto{\pgfpointxy{175.0}{450.0}}
    \pgfpathlineto{\pgfpointxy{251.892}{661.454}}
    \pgfusepath{stroke}
  \end{pgfscope}
  \begin{pgfscope}
    \pgfpathmoveto{\pgfpointxy{327.534}{585.586}}
    \pgfpatharcaxes{41.6335}{70.0169}{\pgfpointxy{204.084}{0.0}}{\pgfpointxy{0.0}{204.084}}
    \pgfusepath{stroke}
  \end{pgfscope}
  \begin{pgfscope}
    \pgfpathmoveto{\pgfpointxy{263.179}{627.709}}
    \pgfpatharcaxes{35.2076}{70.0169}{\pgfpointxy{38.7827}{0.0}}{\pgfpointxy{0.0}{38.7827}}
    \pgfusepath{stroke}
  \end{pgfscope}
  \begin{pgfscope}
    \pgfpathmoveto{\pgfpointxy{244.744}{641.797}}
    \pgfpatharcaxes{250.017}{284.826}{\pgfpointxy{38.7827}{0.0}}{\pgfpointxy{0.0}{38.7827}}
    \pgfusepath{stroke}
  \end{pgfscope}
  \begin{pgfscope}
    \pgfpathmoveto{\pgfpointxy{700.0}{225.0}}
    \pgfpathlineto{\pgfpointxy{700.0}{675.0}}
    \pgfusepath{stroke}
  \end{pgfscope}
  \begin{pgfscope}
    \pgfpathmoveto{\pgfpointxy{475.0}{450.0}}
    \pgfpathlineto{\pgfpointxy{925.0}{450.0}}
    \pgfusepath{stroke}
  \end{pgfscope}
  \begin{pgfscope}
    \pgfsetlinewidth{0.50mm}
    \pgfpathmoveto{\pgfpointxy{600.0}{550.0}}
    \pgfpathlineto{\pgfpointxy{800.0}{550.0}}
    \pgfpathlineto{\pgfpointxy{800.0}{350.0}}
    \pgfpathlineto{\pgfpointxy{600.0}{350.0}}
    \pgfpathclose
    \pgfusepath{stroke}
  \end{pgfscope}
  \begin{pgfscope}
    \pgfpathmoveto{\pgfpointxy{541.028}{608.972}}
    \pgfpathlineto{\pgfpointxy{858.972}{291.028}}
    \pgfusepath{stroke}
  \end{pgfscope}
  \begin{pgfscope}
    \pgfpathmoveto{\pgfpointxy{826.531}{576.531}}
    \pgfpathlineto{\pgfpointxy{541.028}{291.028}}
    \pgfusepath{stroke}
  \end{pgfscope}
  \begin{pgfscope}
    \pgfpathmoveto{\pgfpointxy{897.144}{450.0}}
    \pgfpatharcaxes{0.0}{90.0}{\pgfpointxy{197.144}{0.0}}{\pgfpointxy{0.0}{197.144}}
    \pgfusepath{stroke}
  \end{pgfscope}
  \begin{pgfscope}
    \pgfpathmoveto{\pgfpointxy{904.12}{472.124}}
    \pgfpatharcaxes{145.0}{180.0}{\pgfpointxy{38.571}{0.0}}{\pgfpointxy{0.0}{38.571}}
    \pgfusepath{stroke}
  \end{pgfscope}
  \begin{pgfscope}
    \pgfpathmoveto{\pgfpointxy{897.144}{450.0}}
    \pgfpatharcaxes{0.0}{35.0004}{\pgfpointxy{38.571}{0.0}}{\pgfpointxy{0.0}{38.571}}
    \pgfusepath{stroke}
  \end{pgfscope}
  \begin{pgfscope}
    \pgfsetfillcolor{white}
    \pgfpathellipse{\pgfpointxy{175.0}{450.0}}{\pgfpointxy{8.0}{0.0}}{\pgfpointxy{0.0}{8.0}}
    \pgfusepath{fill,stroke}
  \end{pgfscope}
  \begin{pgfscope}
    \pgfsetfillcolor{white}
    \pgfpathellipse{\pgfpointxy{700.0}{450.0}}{\pgfpointxy{8.0}{0.0}}{\pgfpointxy{0.0}{8.0}}
    \pgfusepath{fill,stroke}
  \end{pgfscope}
  \pgftext[bottom,left,at={\pgfpointxy{297.638}{626.844}}]{$\alpha$}
  \pgftext[bottom,left,at={\pgfpointxy{847.402}{597.402}}]{$90^\circ$}
\end{pgfpicture}
\end{center}
The triangle in the figure above is the least symmetric -- the only rigid motion that maps it
onto itself is the \emph{identity} (which, actually, moves nothing -- it is a \emph{trivial symmetry}).

This geometric intuition easily transposes to other, less geometric situations such as Sudoku grids.
Let us start thinking about the geometry of Sudoku grids 
by identifying \emph{Sudoku transformations}, that is, transformations
that turn a Sudoku grid into another Sudoku grid.

After reflecting a Sudoku grid about one of the four possible axes (the horizontal axis, the vertical axis and the two
diagonals) or rotating it clockwise about its central cell by one of the four possible angles ($0^\circ$, $90^\circ$,
$180^\circ$ or $270^\circ$) we again end up with a Sudoku grid. The distribution of givens in the new Sudoku
grid may differ from the distribution in the original grid, but what we get is again a Sudoku grid, Fig.~\ref{sudoku.fig.rot}.

Note that the $0^\circ$-rotation is the identity (nothing has changed), and that clockwise $90^\circ$-rotation
is the same as counterclockwise $270^\circ$-ro\-ta\-ti\-on. This is why there is no need to discuss counterclockwise
rotations.

The four rotations and the four reflections of a Sudoku grid will be referred to as the \emph{geometric transformations}.

\begin{figure}
  \centering
\begin{pgfpicture}
  \pgfsetxvec{\pgfpoint{\acadpgfunit}{0pt}}
  \pgfsetyvec{\pgfpoint{0pt}{\acadpgfunit}}
  \pgfsetlinewidth{\acadpgflinewidth}
  \pgftransformshift{\pgfpointxy{28.0363}{43.4741}}

  \begin{pgfscope}
    \pgfsetlinewidth{0.50mm}
    \pgfpathmoveto{\pgfpointxy{0.0}{0.0}}
    \pgfpathlineto{\pgfpointxy{0.0}{450.0}}
    \pgfusepath{stroke}
  \end{pgfscope}
  \begin{pgfscope}
    \pgfpathmoveto{\pgfpointxy{50.0}{0.0}}
    \pgfpathlineto{\pgfpointxy{50.0}{450.0}}
    \pgfusepath{stroke}
  \end{pgfscope}
  \begin{pgfscope}
    \pgfpathmoveto{\pgfpointxy{0.0}{50.0}}
    \pgfpathlineto{\pgfpointxy{450.0}{50.0}}
    \pgfusepath{stroke}
  \end{pgfscope}
  \begin{pgfscope}
    \pgfsetlinewidth{0.50mm}
    \pgfpathmoveto{\pgfpointxy{0.0}{0.0}}
    \pgfpathlineto{\pgfpointxy{450.0}{0.0}}
    \pgfusepath{stroke}
  \end{pgfscope}
  \begin{pgfscope}
    \pgfpathmoveto{\pgfpointxy{100.0}{0.0}}
    \pgfpathlineto{\pgfpointxy{100.0}{450.0}}
    \pgfusepath{stroke}
  \end{pgfscope}
  \begin{pgfscope}
    \pgfsetlinewidth{0.50mm}
    \pgfpathmoveto{\pgfpointxy{150.0}{0.0}}
    \pgfpathlineto{\pgfpointxy{150.0}{450.0}}
    \pgfusepath{stroke}
  \end{pgfscope}
  \begin{pgfscope}
    \pgfpathmoveto{\pgfpointxy{200.0}{0.0}}
    \pgfpathlineto{\pgfpointxy{200.0}{450.0}}
    \pgfusepath{stroke}
  \end{pgfscope}
  \begin{pgfscope}
    \pgfpathmoveto{\pgfpointxy{250.0}{0.0}}
    \pgfpathlineto{\pgfpointxy{250.0}{450.0}}
    \pgfusepath{stroke}
  \end{pgfscope}
  \begin{pgfscope}
    \pgfsetlinewidth{0.50mm}
    \pgfpathmoveto{\pgfpointxy{300.0}{0.0}}
    \pgfpathlineto{\pgfpointxy{300.0}{450.0}}
    \pgfusepath{stroke}
  \end{pgfscope}
  \begin{pgfscope}
    \pgfpathmoveto{\pgfpointxy{350.0}{0.0}}
    \pgfpathlineto{\pgfpointxy{350.0}{450.0}}
    \pgfusepath{stroke}
  \end{pgfscope}
  \begin{pgfscope}
    \pgfpathmoveto{\pgfpointxy{400.0}{0.0}}
    \pgfpathlineto{\pgfpointxy{400.0}{450.0}}
    \pgfusepath{stroke}
  \end{pgfscope}
  \begin{pgfscope}
    \pgfsetlinewidth{0.50mm}
    \pgfpathmoveto{\pgfpointxy{450.0}{0.0}}
    \pgfpathlineto{\pgfpointxy{450.0}{450.0}}
    \pgfusepath{stroke}
  \end{pgfscope}
  \begin{pgfscope}
    \pgfpathmoveto{\pgfpointxy{0.0}{100.0}}
    \pgfpathlineto{\pgfpointxy{450.0}{100.0}}
    \pgfusepath{stroke}
  \end{pgfscope}
  \begin{pgfscope}
    \pgfsetlinewidth{0.50mm}
    \pgfpathmoveto{\pgfpointxy{0.0}{150.0}}
    \pgfpathlineto{\pgfpointxy{450.0}{150.0}}
    \pgfusepath{stroke}
  \end{pgfscope}
  \begin{pgfscope}
    \pgfpathmoveto{\pgfpointxy{0.0}{200.0}}
    \pgfpathlineto{\pgfpointxy{450.0}{200.0}}
    \pgfusepath{stroke}
  \end{pgfscope}
  \begin{pgfscope}
    \pgfpathmoveto{\pgfpointxy{0.0}{250.0}}
    \pgfpathlineto{\pgfpointxy{450.0}{250.0}}
    \pgfusepath{stroke}
  \end{pgfscope}
  \begin{pgfscope}
    \pgfsetlinewidth{0.50mm}
    \pgfpathmoveto{\pgfpointxy{0.0}{300.0}}
    \pgfpathlineto{\pgfpointxy{450.0}{300.0}}
    \pgfusepath{stroke}
  \end{pgfscope}
  \begin{pgfscope}
    \pgfpathmoveto{\pgfpointxy{0.0}{350.0}}
    \pgfpathlineto{\pgfpointxy{450.0}{350.0}}
    \pgfusepath{stroke}
  \end{pgfscope}
  \begin{pgfscope}
    \pgfpathmoveto{\pgfpointxy{0.0}{400.0}}
    \pgfpathlineto{\pgfpointxy{450.0}{400.0}}
    \pgfusepath{stroke}
  \end{pgfscope}
  \begin{pgfscope}
    \pgfsetlinewidth{0.50mm}
    \pgfpathmoveto{\pgfpointxy{0.0}{450.0}}
    \pgfpathlineto{\pgfpointxy{450.0}{450.0}}
    \pgfusepath{stroke}
  \end{pgfscope}
  \begin{pgfscope}
    \pgfsetlinewidth{0.50mm}
    \pgfpathmoveto{\pgfpointxy{750.0}{0.0}}
    \pgfpathlineto{\pgfpointxy{750.0}{450.0}}
    \pgfusepath{stroke}
  \end{pgfscope}
  \begin{pgfscope}
    \pgfpathmoveto{\pgfpointxy{800.0}{0.0}}
    \pgfpathlineto{\pgfpointxy{800.0}{450.0}}
    \pgfusepath{stroke}
  \end{pgfscope}
  \begin{pgfscope}
    \pgfpathmoveto{\pgfpointxy{750.0}{50.0}}
    \pgfpathlineto{\pgfpointxy{1200.0}{50.0}}
    \pgfusepath{stroke}
  \end{pgfscope}
  \begin{pgfscope}
    \pgfsetlinewidth{0.50mm}
    \pgfpathmoveto{\pgfpointxy{750.0}{0.0}}
    \pgfpathlineto{\pgfpointxy{1200.0}{0.0}}
    \pgfusepath{stroke}
  \end{pgfscope}
  \begin{pgfscope}
    \pgfpathmoveto{\pgfpointxy{850.0}{0.0}}
    \pgfpathlineto{\pgfpointxy{850.0}{450.0}}
    \pgfusepath{stroke}
  \end{pgfscope}
  \begin{pgfscope}
    \pgfsetlinewidth{0.50mm}
    \pgfpathmoveto{\pgfpointxy{900.0}{0.0}}
    \pgfpathlineto{\pgfpointxy{900.0}{450.0}}
    \pgfusepath{stroke}
  \end{pgfscope}
  \begin{pgfscope}
    \pgfpathmoveto{\pgfpointxy{950.0}{0.0}}
    \pgfpathlineto{\pgfpointxy{950.0}{450.0}}
    \pgfusepath{stroke}
  \end{pgfscope}
  \begin{pgfscope}
    \pgfpathmoveto{\pgfpointxy{1000.0}{0.0}}
    \pgfpathlineto{\pgfpointxy{1000.0}{450.0}}
    \pgfusepath{stroke}
  \end{pgfscope}
  \begin{pgfscope}
    \pgfsetlinewidth{0.50mm}
    \pgfpathmoveto{\pgfpointxy{1050.0}{0.0}}
    \pgfpathlineto{\pgfpointxy{1050.0}{450.0}}
    \pgfusepath{stroke}
  \end{pgfscope}
  \begin{pgfscope}
    \pgfpathmoveto{\pgfpointxy{1100.0}{0.0}}
    \pgfpathlineto{\pgfpointxy{1100.0}{450.0}}
    \pgfusepath{stroke}
  \end{pgfscope}
  \begin{pgfscope}
    \pgfpathmoveto{\pgfpointxy{1150.0}{0.0}}
    \pgfpathlineto{\pgfpointxy{1150.0}{450.0}}
    \pgfusepath{stroke}
  \end{pgfscope}
  \begin{pgfscope}
    \pgfsetlinewidth{0.50mm}
    \pgfpathmoveto{\pgfpointxy{1200.0}{0.0}}
    \pgfpathlineto{\pgfpointxy{1200.0}{450.0}}
    \pgfusepath{stroke}
  \end{pgfscope}
  \begin{pgfscope}
    \pgfpathmoveto{\pgfpointxy{750.0}{100.0}}
    \pgfpathlineto{\pgfpointxy{1200.0}{100.0}}
    \pgfusepath{stroke}
  \end{pgfscope}
  \begin{pgfscope}
    \pgfsetlinewidth{0.50mm}
    \pgfpathmoveto{\pgfpointxy{750.0}{150.0}}
    \pgfpathlineto{\pgfpointxy{1200.0}{150.0}}
    \pgfusepath{stroke}
  \end{pgfscope}
  \begin{pgfscope}
    \pgfpathmoveto{\pgfpointxy{750.0}{200.0}}
    \pgfpathlineto{\pgfpointxy{1200.0}{200.0}}
    \pgfusepath{stroke}
  \end{pgfscope}
  \begin{pgfscope}
    \pgfpathmoveto{\pgfpointxy{750.0}{250.0}}
    \pgfpathlineto{\pgfpointxy{1200.0}{250.0}}
    \pgfusepath{stroke}
  \end{pgfscope}
  \begin{pgfscope}
    \pgfsetlinewidth{0.50mm}
    \pgfpathmoveto{\pgfpointxy{750.0}{300.0}}
    \pgfpathlineto{\pgfpointxy{1200.0}{300.0}}
    \pgfusepath{stroke}
  \end{pgfscope}
  \begin{pgfscope}
    \pgfpathmoveto{\pgfpointxy{750.0}{350.0}}
    \pgfpathlineto{\pgfpointxy{1200.0}{350.0}}
    \pgfusepath{stroke}
  \end{pgfscope}
  \begin{pgfscope}
    \pgfpathmoveto{\pgfpointxy{750.0}{400.0}}
    \pgfpathlineto{\pgfpointxy{1200.0}{400.0}}
    \pgfusepath{stroke}
  \end{pgfscope}
  \begin{pgfscope}
    \pgfsetlinewidth{0.50mm}
    \pgfpathmoveto{\pgfpointxy{750.0}{450.0}}
    \pgfpathlineto{\pgfpointxy{1200.0}{450.0}}
    \pgfusepath{stroke}
  \end{pgfscope}
  \begin{pgfscope}
    \pgfpathmoveto{\pgfpointxy{475.0}{225.0}}
    \pgfpathlineto{\pgfpointxy{725.0}{225.0}}
    \pgfusepath{stroke}
  \end{pgfscope}
  \begin{pgfscope}
    \pgfpathmoveto{\pgfpointxy{702.5}{231.029}}
    \pgfpatharcaxes{240.0}{270.0}{\pgfpointxy{45.0}{0.0}}{\pgfpointxy{0.0}{45.0}}
    \pgfusepath{stroke}
  \end{pgfscope}
  \begin{pgfscope}
    \pgfpathmoveto{\pgfpointxy{725.0}{225.0}}
    \pgfpatharcaxes{90.0}{120.0}{\pgfpointxy{45.0}{0.0}}{\pgfpointxy{0.0}{45.0}}
    \pgfusepath{stroke}
  \end{pgfscope}
  \pgftext[at={\pgfpointxy{25.0}{425.0}}]{4}
  \pgftext[at={\pgfpointxy{425.0}{425.0}}]{8}
  \pgftext[at={\pgfpointxy{1175.0}{425.0}}]{4}
  \pgftext[at={\pgfpointxy{1175.0}{25.0}}]{8}
  \pgftext[at={\pgfpointxy{25.0}{375.0}}]{6}
  \pgftext[at={\pgfpointxy{75.0}{375.0}}]{8}
  \pgftext[at={\pgfpointxy{1125.0}{425.0}}]{6}
  \pgftext[at={\pgfpointxy{1125.0}{375.0}}]{8}
  \pgftext[at={\pgfpointxy{25.0}{325.0}}]{3}
  \pgftext[at={\pgfpointxy{75.0}{325.0}}]{2}
  \pgftext[at={\pgfpointxy{125.0}{325.0}}]{1}
  \pgftext[at={\pgfpointxy{275.0}{325.0}}]{9}
  \pgftext[at={\pgfpointxy{325.0}{325.0}}]{4}
  \pgftext[at={\pgfpointxy{425.0}{325.0}}]{7}
  \pgftext[at={\pgfpointxy{1075.0}{425.0}}]{3}
  \pgftext[at={\pgfpointxy{1075.0}{375.0}}]{2}
  \pgftext[at={\pgfpointxy{1075.0}{325.0}}]{1}
  \pgftext[at={\pgfpointxy{1075.0}{175.0}}]{9}
  \pgftext[at={\pgfpointxy{1075.0}{125.0}}]{4}
  \pgftext[at={\pgfpointxy{1075.0}{25.0}}]{7}
  \pgftext[at={\pgfpointxy{275.0}{275.0}}]{1}
  \pgftext[at={\pgfpointxy{325.0}{275.0}}]{2}
  \pgftext[at={\pgfpointxy{1025.0}{175.0}}]{1}
  \pgftext[at={\pgfpointxy{1025.0}{125.0}}]{2}
  \pgftext[at={\pgfpointxy{175.0}{225.0}}]{8}
  \pgftext[at={\pgfpointxy{225.0}{225.0}}]{5}
  \pgftext[at={\pgfpointxy{275.0}{225.0}}]{7}
  \pgftext[at={\pgfpointxy{975.0}{275.0}}]{8}
  \pgftext[at={\pgfpointxy{975.0}{225.0}}]{5}
  \pgftext[at={\pgfpointxy{975.0}{175.0}}]{7}
  \pgftext[at={\pgfpointxy{125.0}{175.0}}]{3}
  \pgftext[at={\pgfpointxy{175.0}{175.0}}]{4}
  \pgftext[at={\pgfpointxy{925.0}{325.0}}]{3}
  \pgftext[at={\pgfpointxy{925.0}{275.0}}]{4}
  \pgftext[at={\pgfpointxy{25.0}{125.0}}]{7}
  \pgftext[at={\pgfpointxy{125.0}{125.0}}]{4}
  \pgftext[at={\pgfpointxy{325.0}{125.0}}]{1}
  \pgftext[at={\pgfpointxy{875.0}{425.0}}]{7}
  \pgftext[at={\pgfpointxy{875.0}{325.0}}]{4}
  \pgftext[at={\pgfpointxy{875.0}{125.0}}]{1}
  \pgftext[at={\pgfpointxy{325.0}{75.0}}]{5}
  \pgftext[at={\pgfpointxy{375.0}{75.0}}]{2}
  \pgftext[at={\pgfpointxy{825.0}{125.0}}]{5}
  \pgftext[at={\pgfpointxy{825.0}{75.0}}]{2}
  \pgftext[at={\pgfpointxy{25.0}{25.0}}]{8}
  \pgftext[at={\pgfpointxy{325.0}{25.0}}]{7}
  \pgftext[at={\pgfpointxy{375.0}{25.0}}]{3}
  \pgftext[at={\pgfpointxy{425.0}{25.0}}]{9}
  \pgftext[at={\pgfpointxy{775.0}{425.0}}]{8}
  \pgftext[at={\pgfpointxy{775.0}{125.0}}]{7}
  \pgftext[at={\pgfpointxy{775.0}{75.0}}]{3}
  \pgftext[at={\pgfpointxy{775.0}{25.0}}]{9}
  \pgftext[bottom,at={\pgfpointxy{600.0}{237.0}}]{clockwise}
  \pgftext[top,at={\pgfpointxy{600.0}{213.0}}]{$90^\circ$-rotation}
\end{pgfpicture}
  \caption{A $90^\circ$-rotation of a Sudoku grid}
  \label{sudoku.fig.rot}
\end{figure}

Another way to transform a Sudoku grid into a Sudoku grid is to permute its rows or columns. Now, we have to be
careful here: while every permutation of rows and columns preserves row and column conditions, some permutations
may violate box conditions. For example, after swapping the columns 3 and 7 in the Sudoku grid in Fig.~\ref{sudoku.fig.swap1}
we end up with a grid which is no longer a Sudoku because the box condition is violated in box~7.

Therefore, only those permutations of rows and columns which do not violate box conditions can be allowed.
Such permutations of rows and columns will be referred to as \emph{shuffles} and will be defined formally below.

\begin{figure}
  \centering
  \input pic/swap1.pgf
  \caption{Violating a box condition}
  \label{sudoku.fig.swap1}
\end{figure}

Finally, if we systematically replace digits in a Sudoku grid by other digits
taking care that each digit is always replaced by the same digit and that different digits are
replaced by different digits, we will end up with another Sudoku grid.
Such transformations will be referred to as \emph{relabellings}.

%
\bgroup

\def\1{5}
\def\2{1}
\def\3{7}
\def\4{9}
\def\5{8}
\def\6{3}
\def\7{2}
\def\8{4}
\def\9{6}
\def\smlrelabelling{%
$\left(\begin{smallmatrix}
  1 & 2 & 3 & 4 & 5 & 6 & 7 & 8 & 9\\
  \1 & \2 & \3 & \4 & \5 & \6 & \7 & \8 & \9
\end{smallmatrix}\right)$
}

More succinctly, each relabelling reduces to applying a fixed permutation of digits to each digit in a Sudoku grid.
For example, Fig.~\ref{sudoku.fig.relabelling} illustrates the relabelling of a Sudoku grid by applying the permutation
$$
  \begin{pmatrix}
    1 & 2 & 3 & 4 & 5 & 6 & 7 & 8 & 9\\
    \1 & \2 & \3 & \4 & \5 & \6 & \7 & \8 & \9
  \end{pmatrix}.
$$
The concise notation above conveys the information that each 1 in the grid should be changed to~5, each~2
should be changed to~1 and so on.

\begin{figure}
  \centering
\begin{pgfpicture}
  \pgfsetxvec{\pgfpoint{\acadpgfunit}{0pt}}
  \pgfsetyvec{\pgfpoint{0pt}{\acadpgfunit}}
  \pgfsetlinewidth{\acadpgflinewidth}
  \pgftransformshift{\pgfpointxy{150.0}{75.0}}

  \begin{pgfscope}
    \pgfsetlinewidth{0.50mm}
    \pgfpathmoveto{\pgfpointxy{-50.0}{0.0}}
    \pgfpathlineto{\pgfpointxy{-50.0}{450.0}}
    \pgfusepath{stroke}
  \end{pgfscope}
  \begin{pgfscope}
    \pgfpathmoveto{\pgfpointxy{0.0}{0.0}}
    \pgfpathlineto{\pgfpointxy{0.0}{450.0}}
    \pgfusepath{stroke}
  \end{pgfscope}
  \begin{pgfscope}
    \pgfpathmoveto{\pgfpointxy{-50.0}{50.0}}
    \pgfpathlineto{\pgfpointxy{400.0}{50.0}}
    \pgfusepath{stroke}
  \end{pgfscope}
  \begin{pgfscope}
    \pgfsetlinewidth{0.50mm}
    \pgfpathmoveto{\pgfpointxy{-50.0}{0.0}}
    \pgfpathlineto{\pgfpointxy{400.0}{0.0}}
    \pgfusepath{stroke}
  \end{pgfscope}
  \begin{pgfscope}
    \pgfpathmoveto{\pgfpointxy{50.0}{0.0}}
    \pgfpathlineto{\pgfpointxy{50.0}{450.0}}
    \pgfusepath{stroke}
  \end{pgfscope}
  \begin{pgfscope}
    \pgfsetlinewidth{0.50mm}
    \pgfpathmoveto{\pgfpointxy{100.0}{0.0}}
    \pgfpathlineto{\pgfpointxy{100.0}{450.0}}
    \pgfusepath{stroke}
  \end{pgfscope}
  \begin{pgfscope}
    \pgfpathmoveto{\pgfpointxy{150.0}{0.0}}
    \pgfpathlineto{\pgfpointxy{150.0}{450.0}}
    \pgfusepath{stroke}
  \end{pgfscope}
  \begin{pgfscope}
    \pgfpathmoveto{\pgfpointxy{200.0}{0.0}}
    \pgfpathlineto{\pgfpointxy{200.0}{450.0}}
    \pgfusepath{stroke}
  \end{pgfscope}
  \begin{pgfscope}
    \pgfsetlinewidth{0.50mm}
    \pgfpathmoveto{\pgfpointxy{250.0}{0.0}}
    \pgfpathlineto{\pgfpointxy{250.0}{450.0}}
    \pgfusepath{stroke}
  \end{pgfscope}
  \begin{pgfscope}
    \pgfpathmoveto{\pgfpointxy{300.0}{0.0}}
    \pgfpathlineto{\pgfpointxy{300.0}{450.0}}
    \pgfusepath{stroke}
  \end{pgfscope}
  \begin{pgfscope}
    \pgfpathmoveto{\pgfpointxy{350.0}{0.0}}
    \pgfpathlineto{\pgfpointxy{350.0}{450.0}}
    \pgfusepath{stroke}
  \end{pgfscope}
  \begin{pgfscope}
    \pgfsetlinewidth{0.50mm}
    \pgfpathmoveto{\pgfpointxy{400.0}{0.0}}
    \pgfpathlineto{\pgfpointxy{400.0}{450.0}}
    \pgfusepath{stroke}
  \end{pgfscope}
  \begin{pgfscope}
    \pgfpathmoveto{\pgfpointxy{-50.0}{100.0}}
    \pgfpathlineto{\pgfpointxy{400.0}{100.0}}
    \pgfusepath{stroke}
  \end{pgfscope}
  \begin{pgfscope}
    \pgfsetlinewidth{0.50mm}
    \pgfpathmoveto{\pgfpointxy{-50.0}{150.0}}
    \pgfpathlineto{\pgfpointxy{400.0}{150.0}}
    \pgfusepath{stroke}
  \end{pgfscope}
  \begin{pgfscope}
    \pgfpathmoveto{\pgfpointxy{-50.0}{200.0}}
    \pgfpathlineto{\pgfpointxy{400.0}{200.0}}
    \pgfusepath{stroke}
  \end{pgfscope}
  \begin{pgfscope}
    \pgfpathmoveto{\pgfpointxy{-50.0}{250.0}}
    \pgfpathlineto{\pgfpointxy{400.0}{250.0}}
    \pgfusepath{stroke}
  \end{pgfscope}
  \begin{pgfscope}
    \pgfsetlinewidth{0.50mm}
    \pgfpathmoveto{\pgfpointxy{-50.0}{300.0}}
    \pgfpathlineto{\pgfpointxy{400.0}{300.0}}
    \pgfusepath{stroke}
  \end{pgfscope}
  \begin{pgfscope}
    \pgfpathmoveto{\pgfpointxy{-50.0}{350.0}}
    \pgfpathlineto{\pgfpointxy{400.0}{350.0}}
    \pgfusepath{stroke}
  \end{pgfscope}
  \begin{pgfscope}
    \pgfpathmoveto{\pgfpointxy{-50.0}{400.0}}
    \pgfpathlineto{\pgfpointxy{400.0}{400.0}}
    \pgfusepath{stroke}
  \end{pgfscope}
  \begin{pgfscope}
    \pgfsetlinewidth{0.50mm}
    \pgfpathmoveto{\pgfpointxy{-50.0}{450.0}}
    \pgfpathlineto{\pgfpointxy{400.0}{450.0}}
    \pgfusepath{stroke}
  \end{pgfscope}
  \begin{pgfscope}
    \pgfpathmoveto{\pgfpointxy{425.0}{225.0}}
    \pgfpathlineto{\pgfpointxy{775.0}{225.0}}
    \pgfusepath{stroke}
  \end{pgfscope}
  \begin{pgfscope}
    \pgfpathmoveto{\pgfpointxy{752.5}{231.029}}
    \pgfpatharcaxes{240.0}{270.0}{\pgfpointxy{45.0}{0.0}}{\pgfpointxy{0.0}{45.0}}
    \pgfusepath{stroke}
  \end{pgfscope}
  \begin{pgfscope}
    \pgfpathmoveto{\pgfpointxy{775.0}{225.0}}
    \pgfpatharcaxes{90.0}{120.0}{\pgfpointxy{45.0}{0.0}}{\pgfpointxy{0.0}{45.0}}
    \pgfusepath{stroke}
  \end{pgfscope}
  \begin{pgfscope}
    \pgfsetlinewidth{0.50mm}
    \pgfpathmoveto{\pgfpointxy{800.0}{0.0}}
    \pgfpathlineto{\pgfpointxy{800.0}{450.0}}
    \pgfusepath{stroke}
  \end{pgfscope}
  \begin{pgfscope}
    \pgfpathmoveto{\pgfpointxy{850.0}{0.0}}
    \pgfpathlineto{\pgfpointxy{850.0}{450.0}}
    \pgfusepath{stroke}
  \end{pgfscope}
  \begin{pgfscope}
    \pgfpathmoveto{\pgfpointxy{800.0}{50.0}}
    \pgfpathlineto{\pgfpointxy{1250.0}{50.0}}
    \pgfusepath{stroke}
  \end{pgfscope}
  \begin{pgfscope}
    \pgfsetlinewidth{0.50mm}
    \pgfpathmoveto{\pgfpointxy{800.0}{0.0}}
    \pgfpathlineto{\pgfpointxy{1250.0}{0.0}}
    \pgfusepath{stroke}
  \end{pgfscope}
  \begin{pgfscope}
    \pgfpathmoveto{\pgfpointxy{900.0}{0.0}}
    \pgfpathlineto{\pgfpointxy{900.0}{450.0}}
    \pgfusepath{stroke}
  \end{pgfscope}
  \begin{pgfscope}
    \pgfsetlinewidth{0.50mm}
    \pgfpathmoveto{\pgfpointxy{950.0}{0.0}}
    \pgfpathlineto{\pgfpointxy{950.0}{450.0}}
    \pgfusepath{stroke}
  \end{pgfscope}
  \begin{pgfscope}
    \pgfpathmoveto{\pgfpointxy{1000.0}{0.0}}
    \pgfpathlineto{\pgfpointxy{1000.0}{450.0}}
    \pgfusepath{stroke}
  \end{pgfscope}
  \begin{pgfscope}
    \pgfpathmoveto{\pgfpointxy{1050.0}{0.0}}
    \pgfpathlineto{\pgfpointxy{1050.0}{450.0}}
    \pgfusepath{stroke}
  \end{pgfscope}
  \begin{pgfscope}
    \pgfsetlinewidth{0.50mm}
    \pgfpathmoveto{\pgfpointxy{1100.0}{0.0}}
    \pgfpathlineto{\pgfpointxy{1100.0}{450.0}}
    \pgfusepath{stroke}
  \end{pgfscope}
  \begin{pgfscope}
    \pgfpathmoveto{\pgfpointxy{1150.0}{0.0}}
    \pgfpathlineto{\pgfpointxy{1150.0}{450.0}}
    \pgfusepath{stroke}
  \end{pgfscope}
  \begin{pgfscope}
    \pgfpathmoveto{\pgfpointxy{1200.0}{0.0}}
    \pgfpathlineto{\pgfpointxy{1200.0}{450.0}}
    \pgfusepath{stroke}
  \end{pgfscope}
  \begin{pgfscope}
    \pgfsetlinewidth{0.50mm}
    \pgfpathmoveto{\pgfpointxy{1250.0}{0.0}}
    \pgfpathlineto{\pgfpointxy{1250.0}{450.0}}
    \pgfusepath{stroke}
  \end{pgfscope}
  \begin{pgfscope}
    \pgfpathmoveto{\pgfpointxy{800.0}{100.0}}
    \pgfpathlineto{\pgfpointxy{1250.0}{100.0}}
    \pgfusepath{stroke}
  \end{pgfscope}
  \begin{pgfscope}
    \pgfsetlinewidth{0.50mm}
    \pgfpathmoveto{\pgfpointxy{800.0}{150.0}}
    \pgfpathlineto{\pgfpointxy{1250.0}{150.0}}
    \pgfusepath{stroke}
  \end{pgfscope}
  \begin{pgfscope}
    \pgfpathmoveto{\pgfpointxy{800.0}{200.0}}
    \pgfpathlineto{\pgfpointxy{1250.0}{200.0}}
    \pgfusepath{stroke}
  \end{pgfscope}
  \begin{pgfscope}
    \pgfpathmoveto{\pgfpointxy{800.0}{250.0}}
    \pgfpathlineto{\pgfpointxy{1250.0}{250.0}}
    \pgfusepath{stroke}
  \end{pgfscope}
  \begin{pgfscope}
    \pgfsetlinewidth{0.50mm}
    \pgfpathmoveto{\pgfpointxy{800.0}{300.0}}
    \pgfpathlineto{\pgfpointxy{1250.0}{300.0}}
    \pgfusepath{stroke}
  \end{pgfscope}
  \begin{pgfscope}
    \pgfpathmoveto{\pgfpointxy{800.0}{350.0}}
    \pgfpathlineto{\pgfpointxy{1250.0}{350.0}}
    \pgfusepath{stroke}
  \end{pgfscope}
  \begin{pgfscope}
    \pgfpathmoveto{\pgfpointxy{800.0}{400.0}}
    \pgfpathlineto{\pgfpointxy{1250.0}{400.0}}
    \pgfusepath{stroke}
  \end{pgfscope}
  \begin{pgfscope}
    \pgfsetlinewidth{0.50mm}
    \pgfpathmoveto{\pgfpointxy{800.0}{450.0}}
    \pgfpathlineto{\pgfpointxy{1250.0}{450.0}}
    \pgfusepath{stroke}
  \end{pgfscope}
  \pgftext[at={\pgfpointxy{-25.0}{425.0}}]{4}
  \pgftext[at={\pgfpointxy{375.0}{425.0}}]{8}
  \pgftext[at={\pgfpointxy{-25.0}{375.0}}]{6}
  \pgftext[at={\pgfpointxy{25.0}{375.0}}]{8}
  \pgftext[at={\pgfpointxy{-25.0}{325.0}}]{3}
  \pgftext[at={\pgfpointxy{25.0}{325.0}}]{2}
  \pgftext[at={\pgfpointxy{75.0}{325.0}}]{1}
  \pgftext[at={\pgfpointxy{225.0}{325.0}}]{9}
  \pgftext[at={\pgfpointxy{275.0}{325.0}}]{4}
  \pgftext[at={\pgfpointxy{375.0}{325.0}}]{7}
  \pgftext[at={\pgfpointxy{225.0}{275.0}}]{1}
  \pgftext[at={\pgfpointxy{275.0}{275.0}}]{2}
  \pgftext[at={\pgfpointxy{125.0}{225.0}}]{8}
  \pgftext[at={\pgfpointxy{175.0}{225.0}}]{5}
  \pgftext[at={\pgfpointxy{225.0}{225.0}}]{7}
  \pgftext[at={\pgfpointxy{75.0}{175.0}}]{3}
  \pgftext[at={\pgfpointxy{125.0}{175.0}}]{4}
  \pgftext[at={\pgfpointxy{-25.0}{125.0}}]{7}
  \pgftext[at={\pgfpointxy{75.0}{125.0}}]{4}
  \pgftext[at={\pgfpointxy{275.0}{125.0}}]{1}
  \pgftext[at={\pgfpointxy{275.0}{75.0}}]{5}
  \pgftext[at={\pgfpointxy{325.0}{75.0}}]{2}
  \pgftext[at={\pgfpointxy{-25.0}{25.0}}]{8}
  \pgftext[at={\pgfpointxy{275.0}{25.0}}]{7}
  \pgftext[at={\pgfpointxy{325.0}{25.0}}]{3}
  \pgftext[at={\pgfpointxy{375.0}{25.0}}]{9}
  \pgftext[bottom,at={\pgfpointxy{600.0}{237.0}}]{relabelling}
  \pgftext[at={\pgfpointxy{825.0}{425.0}}]{\4}
  \pgftext[at={\pgfpointxy{1225.0}{425.0}}]{\8}
  \pgftext[at={\pgfpointxy{825.0}{375.0}}]{\6}
  \pgftext[at={\pgfpointxy{875.0}{375.0}}]{\8}
  \pgftext[at={\pgfpointxy{825.0}{325.0}}]{\3}
  \pgftext[at={\pgfpointxy{875.0}{325.0}}]{\2}
  \pgftext[at={\pgfpointxy{925.0}{325.0}}]{\1}
  \pgftext[at={\pgfpointxy{1075.0}{325.0}}]{\9}
  \pgftext[at={\pgfpointxy{1125.0}{325.0}}]{\4}
  \pgftext[at={\pgfpointxy{1225.0}{325.0}}]{\7}
  \pgftext[at={\pgfpointxy{1075.0}{275.0}}]{\1}
  \pgftext[at={\pgfpointxy{1125.0}{275.0}}]{\2}
  \pgftext[at={\pgfpointxy{975.0}{225.0}}]{\8}
  \pgftext[at={\pgfpointxy{1025.0}{225.0}}]{\5}
  \pgftext[at={\pgfpointxy{1075.0}{225.0}}]{\7}
  \pgftext[at={\pgfpointxy{925.0}{175.0}}]{\3}
  \pgftext[at={\pgfpointxy{975.0}{175.0}}]{\4}
  \pgftext[at={\pgfpointxy{825.0}{125.0}}]{\7}
  \pgftext[at={\pgfpointxy{925.0}{125.0}}]{\4}
  \pgftext[at={\pgfpointxy{1125.0}{125.0}}]{\1}
  \pgftext[at={\pgfpointxy{1125.0}{75.0}}]{\5}
  \pgftext[at={\pgfpointxy{1175.0}{75.0}}]{\2}
  \pgftext[at={\pgfpointxy{825.0}{25.0}}]{\8}
  \pgftext[at={\pgfpointxy{1125.0}{25.0}}]{\7}
  \pgftext[at={\pgfpointxy{1175.0}{25.0}}]{\3}
  \pgftext[at={\pgfpointxy{1225.0}{25.0}}]{\9}
  \pgftext[top,at={\pgfpointxy{600.0}{213.0}}]{\smlrelabelling}
\end{pgfpicture}
  \caption{Relabelling}
  \label{sudoku.fig.relabelling}
\end{figure}

\egroup
%

\section{Sudoku transformations}
\label{sudoku.sec.transf-def}

As we shall see, defining relabellings and shuffles formally is easy.
In contrast to that, defining geometric Sudoku transformations can be challenging. To save time
and space in both defining and formally reasoning about them,
we shall define geometric transformations as chains of reflections.

It is quite usual in many daily activities to take an object and apply
to it a sequence of transformations. In modern mathematics a transformation is modelled by a function
between two sets of objects, and then a sequence of transformations
$$
  A \overset f \longrightarrow B \overset g \longrightarrow C
$$
is usually denoted by $g \circ f$. Here's why. When this chain of transformations
acts on an element $x \in A$ we usually write:
$$
  g(f(x))
$$
because we first apply $f$ to $x$ to obtain $f(x)$, and then apply $g$ to $f(x)$. Thus:
$$
  (g \circ f)(x) = g(f(x)).
$$

The \emph{identity transformation $\id$} is the Sudoku transformation that changes nothing.
It has the property that $\id(\calA) = \calA$ for every grid $\calA$.
We also say that $\id$ is the \emph{trivial Sudoku transformation}.
A Sudoku transformation $\theta$ is \emph{nontrivial} if $\theta \ne \id$.

We shall now define geometric Sudoku transformations as chains of two kinds of reflections.

\begin{DEF}
  Let $\calA = [A_{ij}]_{9 \times 9}$ and $\calB = [B_{ij}]_{9 \times 9}$ be Sudoku grids.
  \begin{itemize}
  \item
  The \emph{reflection about the main diagonal} is a Sudoku transformation $\delta$ such that
  $\delta(\calA) = \calB$ just in case $B_{ij} = A_{ji}$, for all $1 \le i, j  \le 9$.
  \item
  The \emph{reflection about the horizontal axis} is a Sudoku transformation $\chi$ such that
  $\chi(\calA) = \calB$ just in case $B_{ij} = A_{10-i,j}$, for all $1 \le i, j \le 9$.
\end{itemize}
\end{DEF}

\begin{DEF}\label{sudoku.def.geom}
  This is the list of \emph{geometric Sudoku transformations:}
  \begin{itemize}\itemsep -1pt
    \item the identity transformation $\id$;
    \item the reflections $\delta$ and $\chi$ defined above;
    \item the \emph{reflection about the vertical axis} $\overline\chi = \delta\circ\chi\circ\delta$;
    \item the \emph{reflection about the auxiliary diagonal} $\overline\delta = \overline\chi\circ\delta\circ\overline\chi$;
    \item the \emph{clockwise $90^\circ$-rotation} $\rho_1 = \delta\circ\chi$;
    \item the \emph{clockwise $180^\circ$-rotation} $\rho_2 = \rho_1\circ\rho_1$; and
    \item the \emph{clockwise $270^\circ$-rotation} $\rho_3 = \rho_1\circ\rho_1\circ\rho_1$.
  \end{itemize}
\end{DEF}

Recall that only well-behaved row and column permutations preserve the quality of being a Sudoku grid.
To isolate those from the bad ones we shall introduce a few auxiliary notions. Let
$$
  \Pi = \big\{ \{1, 2, 3\}, \; \{4, 5, 6\}, \; \{7, 8, 9\} \big\}
$$
be a \emph{fundamental partition} of the index set $\{1, 2, \ldots, 9\}$. The elements of $\Pi$ will be
referred to as \emph{blocks} of~$\Pi$. So, $\{4, 5, 6\}$ is a block of~$\Pi$, while $\{1, 3, 5\}$ is not.
A bijection $f : \{1, 2, \ldots, 9\} \to \{1, 2, \ldots, 9\}$ \emph{respects $\Pi$} if the following holds:
\begin{itemize}
  \item if $i$ and $j$ belong to the same block of $\Pi$, then so do $f(i)$ and $f(j)$; and
  \item if $i$ and $j$ belong to different blocks of $\Pi$, then so do $f(i)$ and $f(j)$.
\end{itemize}
The idea behind this is that a row/column permutation transforms a Sudoku grid into a Sudoku grid
just in case the way it operates on row/column indices respects~$\Pi$. This is how we distinguish
well-behaved row/column permutations. Let us now define the remaining Sudoku transformations.

\begin{DEF}\label{sudoku.def.shuf-relab}
  Let $\calA = [A_{ij}]_{9 \times 9}$ and $\calB = [B_{ij}]_{9 \times 9}$ be Sudoku grids
  and let $f : \{1, 2, \ldots, 9\} \to \{1, 2, \ldots, 9\}$ be a bijection which respects~$\Pi$.
  \begin{itemize}
  \item
    The \emph{$f$-shuffle of rows} is a Sudoku transformation $\sigma_f$ such that
    $\sigma_f(\calA) = \calB$ just in case $B_{ij} = A_{f(i)\,j}$, for all $1 \le i, j  \le 9$.
  \item
    The \emph{$f$-shuffle of columns} is a Sudoku transformation $\tau_f$ such that
    $\tau_f(\calA) = \calB$ just in case $B_{ij} = A_{i\,f(j)}$, for all $1 \le i, j \le 9$.
  \end{itemize}
\end{DEF}

\begin{DEF}\label{sudoku.def.relab}
  Let $\calA = [A_{ij}]_{9 \times 9}$ and $\calB = [B_{ij}]_{9 \times 9}$ be Sudoku grids
  and let $f : \{1, 2, \ldots, 9\} \to \{1, 2, \ldots, 9\}$ be a bijection.
  The \emph{$f$-relabelling} is a Sudoku transformation $\lambda_f$ such that
  $\lambda_f(\calA) = \calB$ just in case $B_{ij} = \{f(d) : d \in A_{ij} \}$, for all $1 \le i, j \le 9$.
\end{DEF}

\begin{DEF}
  An \emph{elementary Sudoku transformation} is any of the transformations listed in Definitions~\ref{sudoku.def.geom},
  \ref{sudoku.def.shuf-relab} and~\ref{sudoku.def.relab}.
\end{DEF}

Clearly, a sequence of elementary Sudoku transformations (such as rotate-shuffle-relabel)
is again a Sudoku transformation. But, is that all?
Is there any other way to transform a Sudoku grid that might have eluded our scrutiny?
This question was addressed by Adler and Adler in~\cite{AdlerAdler} and the answer is that there
are no other meaningful Sudoku transformations.

Adler and Adler start by analyzing full Sudoku grids. To facilitate the presentation of their main
result, let us introduce several new notions. By a \emph{Sudoku board} we mean the set of pairs
$$
  \calN = \{(i, j) : 1 \le i,j \le 9\}.
$$
These are the indices of cells in a Sudoku grid. Sudoku transformations are then introduced in terms of
partitions of $\calN$ with the following intuition. Let $N_1 \subseteq \calN$ be the set of all the
cells that contain the same Sudoku digit, not necessarily~1; let $N_2 \subseteq \calN$ be the set of all the
cells that contain the same Sudoku digit different from the one that determines $N_1$ (again, this digit is not required to be~2);
and so on until we form $N_1$, $N_2$, \ldots, $N_9$. Let us stress again that the cells in each $N_i$ have to contain
the same digit, but the cells in $N_i$ are not required to contain the digit $i$, $1 \le i \le 9$. Then
$\{N_1, N_2, \ldots, N_9\}$ is a partition of $\calN$, each block $N_i$ has size~9 and each $N_i$
intersects each row, column and box of the grid exactly once.

Following~\cite{AdlerAdler} we shall, therefore, say that a partition $\{M_1, M_2, \ldots, M_9\}$ of $\calN$ is \emph{valid} if
\begin{itemize}
  \item each $M_i$ has size 9, $1 \le i \le 9$;
  \item each $M_i$ intersects each row, column and box of the grid exactly once, $1 \le i \le 9$.
\end{itemize}

\begin{DEF} \cite{AdlerAdler}
  A \emph{fundamental transformation} is every bijection
  $$
    f : \calN \to \calN
  $$
  which takes valid partitions to valid partitions. More precisely, for every 
  valid partition $\{M_1, M_2, \ldots, M_9\}$ of $\calN$ we have that
  $\{f(M_1), f(M_2), \ldots, f(M_9)\}$ is also a valid partition of $\calN$
  (here, $f(M) = \{f(x) : x \in M\}$).
\end{DEF}

Note that fundamental transformations operate on \emph{full} Sudoku grids!

\begin{THM}[The characterization of fundamental transformations~\cite{AdlerAdler}]
  Every fundamental transformation is of the form
  $\theta_1 \circ \theta_2 \circ \ldots \circ \theta_n$
  where each $\theta_i$ is an elementary Sudoku transformation.
\end{THM}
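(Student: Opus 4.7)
The plan is to prove this characterization by reducing an arbitrary fundamental transformation $f$ to the identity via composition with elementary Sudoku transformations. The crux is to extract, from the purely combinatorial condition that $f$ preserves valid partitions, enough rigidity to force $f$ into a product of a reflection, row shuffles, and column shuffles.

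First I would establish that $f$ preserves the relation ``$c_1$ and $c_2$ lie in a common row, column, or box.'' The direction $(\Rightarrow)$ is immediate from the definition of a valid partition: if $c_1, c_2$ share a row, column, or box, then no block of any valid partition can contain both. For $(\Leftarrow)$ I would show that any two cells in different rows, columns, and boxes occur together in some valid partition, for instance by exhibiting a full Sudoku grid in which those two cells carry the same digit; the partition by same-digit cells is then valid. Since the set of valid partitions is finite, $f$ permutes it, so $f^{-1}$ preserves valid partitions as well, and combining both directions gives that $f$ preserves this ``shared-line'' relation on $\calN$.

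Next I would identify the maximal cliques of this relation. A routine check shows they are precisely the 9 rows, 9 columns, and 9 boxes (27 sets in total), so $f$ induces a permutation of these 27 sets. I would then classify them by their intersection profiles: rows and columns share the same profile (nine intersections of size one, three of size three, the rest of size zero), whereas boxes have six intersections of size three and all others of size zero. Hence $f$ preserves the class of boxes setwise, and either preserves both the class of rows and the class of columns setwise, or swaps them. Composing with the reflection $\delta$ about the main diagonal in the second case, I may assume from now on that $f$ sends rows to rows, columns to columns, and boxes to boxes.

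Now I would analyse the induced permutations on rows and columns. Two rows lie in the same band iff they both pass through some common box, and $f$ preserves this incidence (it preserves intersection sizes among rows, columns, and boxes), so the induced row-permutation respects the band partition $\Pi$; symmetrically for the induced column-permutation and the stack partition. Let $g$ and $h$ be the resulting $\Pi$-respecting bijections; composing $f$ with $\sigma_{g^{-1}}$ and $\tau_{h^{-1}}$ produces a fundamental transformation that fixes every row and every column setwise. Since each cell of $\calN$ is the unique intersection of its row with its column, this forces the residual transformation to be the identity pointwise, and we have expressed the original $f$ as a composition of at most one $\delta$, one row shuffle, and one column shuffle. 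The main obstacle is the $(\Leftarrow)$ direction of the shared-line step: it reduces to the realizability claim that any two non-attacking cells can be assigned the same digit in some full Sudoku grid, and the slickest route is to invoke the known abundance of Sudoku solutions (or to build the required valid partition directly by a Latin-square style argument, bypassing Sudoku realizability altogether). Once that is in hand, everything that follows is intersection arithmetic and recognition of the shuffles and reflections lurking inside the induced permutations of the 27 rows, columns, and boxes.
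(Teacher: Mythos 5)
The paper does not prove this theorem at all: it is imported verbatim from Adler and Adler~\cite{AdlerAdler} as a known result, so there is no internal proof to compare against. Your argument is a legitimate self\-/contained reconstruction along the standard lines, and its skeleton is sound: $f$ and (because an injective self\-/map of the finite, nonempty set of valid partitions is a bijection) also $f^{-1}$ preserve valid partitions, hence $f$ preserves the ``attacking'' relation; the maximal cliques of that relation are exactly the $27$ rows, columns and boxes; intersection cardinalities separate boxes from lines and the $K_{9,9}$ incidence between rows and columns forces $f$ to preserve or swap those two classes; after absorbing $\delta$, the induced row and column permutations respect $\Pi$ because bands are detected by ``meeting a common box,'' and undoing them with $\sigma_{g^{-1}}$ and $\tau_{h^{-1}}$ leaves a bijection fixing every row and column setwise, hence the identity. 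A pleasant by\-/product you could state explicitly is that no relabelling is needed: a fundamental transformation is a pure cell bijection and your decomposition uses only $\delta$ and shuffles, i.e.\ it is a cell transformation in the paper's sense (consistent with Theorem~\ref{sudoku.thm.normalform}).

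The one place where real work remains is the step you yourself flag: that any two cells sharing no row, column or box lie in a common block of \emph{some} valid partition. This is true but must actually be exhibited, and ``by symmetry'' needs care because the non\-/attacking pairs split into more than one orbit under the cell\-/transformation group (same band different stack, same stack different band, and neither), so a single pair of witness cells does not suffice by transitivity alone. The cleanest fix is to write down one explicit full Sudoku grid (e.g.\ the standard shifted\-/rows grid) and observe that the positions of a single digit in it already realize all the required pair types, then move an arbitrary non\-/attacking pair onto a realized one of the same type using shuffles only --- which is legitimate and not circular, since it uses only the known transformations, not the theorem being proved. With that lemma written out, your proof is complete.
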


This theorem justifies the following expansion of the notion of fundamental transformation to all
Sudoku grids, including those that are not necessarily full.

\begin{DEF}
  A \emph{transformation of a Sudoku grid}, or a \emph{Sudoku transformation} for short,
  is any chain $\theta_1 \circ \theta_2 \circ \ldots \circ \theta_n$ of elementary Sudoku transformations.
  A \emph{cell transformation} is any chain $\theta_1 \circ \theta_2 \circ \ldots \circ \theta_n$
  of geometric transformations and shuffles.
\end{DEF}

We can now address the issue of essentially different grids from the introduction:
we say that two Sudoku grids are \emph{essentially different} if no Sudoku transformation
takes one of them onto the other.

\begin{THM}[Normal form]\label{sudoku.thm.normalform}
  Every Sudoku transformation can be written as $\xi\circ\lambda$ where $\xi$ is a
  cell transformation and $\lambda$ is a relabelling.
\end{THM}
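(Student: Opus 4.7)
The plan is to push every relabelling in the given chain past the geometric transformations and shuffles on its right, amalgamate all relabellings into a single one, and fuse the remaining geometric transformations and shuffles into a cell transformation. The whole argument rests on the observation that relabellings act on cell \emph{contents} while cell transformations act on cell \emph{positions}, so the two sorts of moves commute.

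First I would establish the commutativity lemma: for every generator $\eta$ of cell transformations (that is, for $\eta \in \{\id, \delta, \chi\}$ or $\eta = \sigma_f$ or $\eta = \tau_f$ with $f$ respecting $\Pi$) and for every relabelling $\lambda_g$, one has
\[
  \eta \circ \lambda_g = \lambda_g \circ \eta.
\]
Each such $\eta$ satisfies $(\eta(\calA))_{ij} = A_{\phi_\eta(i,j)}$ for a bijection $\phi_\eta : \calN \to \calN$ read off directly from Definitions~\ref{sudoku.def.geom} and~\ref{sudoku.def.shuf-relab}. Combining this with the definition of $\lambda_g$ shows that both sides send $\calA$ to the grid whose $(i,j)$-entry equals $\{g(d) : d \in A_{\phi_\eta(i,j)}\}$. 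Since the geometric Sudoku transformations $\overline\chi$, $\overline\delta$, $\rho_1$, $\rho_2$ and $\rho_3$ are, by Definition~\ref{sudoku.def.geom}, themselves chains of $\delta$s and $\chi$s, commutativity propagates by associativity to \emph{every} cell transformation $\xi$: that is, $\xi \circ \lambda_g = \lambda_g \circ \xi$. I would also record the routine identity $\lambda_g \circ \lambda_h = \lambda_{g \circ h}$, which says that relabellings are closed under composition.

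With these tools in hand, the theorem follows by induction on the length $n$ of the defining chain $\theta_1 \circ \ldots \circ \theta_n$ of elementary transformations. For $n = 1$ a single generator is either a cell transformation (take $\xi = \theta_1$, $\lambda = \lambda_{\id}$) or a relabelling (take $\xi = \id$, $\lambda = \theta_1$). For the inductive step, apply the hypothesis to $\theta_2 \circ \ldots \circ \theta_n$ to obtain $\xi' \circ \lambda'$ with $\xi'$ a cell transformation and $\lambda'$ a relabelling. If $\theta_1$ is a geometric transformation or a shuffle, then $\theta_1 \circ \xi'$ is a cell transformation by definition and we are done. Otherwise $\theta_1 = \lambda_g$ for some bijection $g$, and commuting it past $\xi'$ turns $\theta_1 \circ \xi' \circ \lambda'$ into $\xi' \circ (\lambda_g \circ \lambda')$, where $\lambda_g \circ \lambda'$ is again a relabelling by closure.

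The main obstacle is the commutativity lemma: although conceptually obvious, it could invite a tedious case split over the seven geometric transformations plus the two kinds of shuffles. Encoding each generator uniformly as a bijection $\phi_\eta$ of the Sudoku board $\calN$, so that all of them act by $\calA \mapsto [A_{\phi_\eta(i,j)}]_{9\times 9}$, reduces the verification to a single line and lets the induction take over.
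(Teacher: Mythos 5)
Your proposal is correct and follows exactly the route the paper intends: the paper relegates this theorem to Exercise~\ref{sudoku.zad.TL=LT}, with the preceding exercise ($\xi\circ\lambda = \lambda\circ\xi$ for every cell transformation $\xi$ and relabelling $\lambda$) supplying precisely the commutation lemma you prove, after which your induction on the length of the chain, together with closure of relabellings under composition, completes the argument. The uniform encoding of each generator as a bijection $\phi_\eta$ of the board $\calN$ is a clean way to dispatch the case analysis.
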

\begin{proof}
  Exercise~\ref{sudoku.zad.TL=LT}.
\end{proof}

In most cases when a nontrivial Sudoku transformation is applied to a Sudoku grid $\calA$ the outcome
will differ from $\calA$. However, those Sudoku transformations that leave a Sudoku grid unchanged are
of particular interest: these are the symmetries of the grid! In the combinatorial context such as Sudoku, symmetries are
often referred to as \emph{automorphisms}.

\begin{DEF}
  Let $\calA$ be a Sudoku grid and $\theta$ a Sudoku transformation. We say that $\theta$ is an \emph{automorphism} of $\calA$
  if $\theta(\calA) = \calA$. A Sudoku grid $\calA$ is \emph{symmetric} if $\theta(\calA) = \calA$ for some
  nontrivial automorphism~$\theta$.
\end{DEF}

\begin{EX}
  Fig.~\ref{sudoku.fig.invariant-grids}~$(a)$ is an example of a full Sudoku grid with a nontrivial automorphism $\tau_f \circ \sigma_f$
  where
  $$
    f = \begin{pmatrix}
      1 & 2 & 3 & 4 & 5 & 6 & 7 & 8 & 9\\
      4 & 5 & 6 & 7 & 8 & 9 & 1 & 2 & 3
    \end{pmatrix},
  $$
  while Fig.~\ref{sudoku.fig.invariant-grids}~$(b)$ is an example of a full Sudoku grid with a nontrivial automorphism
  $\rho_2 \circ \lambda_g$ where $g(i) = 10 - i$.
\end{EX}

\begin{figure}
  \centering
  \input pic/Ex3.pgf
  \caption{A full Sudoku grid invariant for: $(a)$ a combination of shuffles; $(b)$ a combination of a rotation and a relabelling}
  \label{sudoku.fig.invariant-grids}
\end{figure}

\begin{ZAD}
  $(a)$ Prove that $\theta\circ\theta = \id$ whenever $\theta$ is a reflection.

  $(b)$ Prove that $\overline\delta = \delta\circ\chi\circ\delta\circ\chi\circ\delta$.

  $(c)$ Prove that $\rho_2\circ\rho_2 = \id$.

  $(d)$ Prove that $\rho_3 = \chi\circ\delta$.

  $(e)$ Let $\gamma_1$, $\gamma_2$, \ldots, $\gamma_n$ be arbitrary geometric transformations. Prove that
  $\gamma_1 \circ \gamma_2 \circ \ldots \circ \gamma_n$ is again a geometric transformation.
\end{ZAD}

\begin{ZAD}
  Prove that $\chi$ is a shuffle of rows and $\overline\chi$ is a shuffle of columns.
  Write down the corresponding bijections and check that they respect~$\Pi$.
\end{ZAD}

\begin{ZAD}
  Prove that $\xi\circ\lambda = \lambda\circ\xi$ for every cell transformation $\xi$ and every
  relabelling~$\lambda$.
\end{ZAD}

\begin{ZAD}\label{sudoku.zad.TL=LT}
  Prove Theorem~\ref{sudoku.thm.normalform}.
\end{ZAD}

\begin{ZAD}
  $(a)$ Prove that $\id$ is an automorphism of every Sudoku grid.

  $(b)$ Prove: if $\theta_1$ and $\theta_2$ are automorphisms of a Sudoku grid $\calA$ then
  $\theta_1 \circ \theta_2$ is also an automorphism of $\calA$.
\end{ZAD}

\section{Gurth’s Symmetrical Placement}
\label{sudoku.sec.Gurth}

We have now reached the point where we can formulate and prove Gurth's Symmetrical Placement Theorem.

Recall that cell transformations actually move cells, so for a cell transformation $\xi$ we shall write
$\xi(r_i c_j) = r_\ell c_m$ to indicate that the cell $r_i c_j$ in the original grid is moved to the cell~$r_\ell c_m$.
For example, $\delta(r_1 c_2) = r_2 c_1$ and $\rho_1(r_1 c_2) = r_2 c_9$. Note that $\delta(r_i c_i) = r_i c_i$ for all $1 \le i \le 9$
and $\rho_1(r_5 c_5) = \rho_2(r_5 c_5) = \rho_3(r_5 c_5) = r_5 c_5$.

\begin{LEM}\label{sudoku.lem.transf-full-grid}
  $(a)$ A Sudoku transformation takes a full Sudoku grid to a full Sudoku grid.

  $(b)$ Let $\calA$ and $\calS$ be Sudoku grids and $\theta$ a Sudoku transformation. If $\calS$ is a solution of $\calA$
  then $\theta(\calS)$ is a solution of $\theta(\calA)$.
\end{LEM}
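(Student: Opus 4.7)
The plan is to handle both parts by induction on the length of the chain of elementary transformations, since a Sudoku transformation is by definition a composition $\theta = \theta_1 \circ \theta_2 \circ \ldots \circ \theta_n$ of elementary ones. For both parts the base case is essentially trivial, and the composition step is transparent because $(\theta_1\circ\theta_2)(\calA) = \theta_1(\theta_2(\calA))$; so the whole argument reduces to checking the two claims for a single elementary Sudoku transformation.

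For part $(a)$, I would go through the three kinds of elementary transformations in Definitions~\ref{sudoku.def.geom} and~\ref{sudoku.def.shuf-relab} and verify that each of them preserves fullness. The reflections $\delta$ and $\chi$ send $\calA=[A_{ij}]$ to a grid with entries $A_{ji}$ and $A_{10-i,j}$ respectively, and all other geometric transformations are chains of these two; so if $|A_{ij}| = 1$ for every $i,j$, the same holds after reindexing. The shuffles $\sigma_f$ and $\tau_f$ likewise only reindex cells. Finally the relabelling $\lambda_f$ sends a cell $A_{ij}$ to $\{f(d) : d \in A_{ij}\}$, and since $f$ is a bijection we have $|\{f(d) : d \in A_{ij}\}| = |A_{ij}|$. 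Since the output is already guaranteed to be a Sudoku grid (that is part of the definition of an elementary transformation), and the cardinalities of cells are preserved, a full Sudoku grid is mapped to a full Sudoku grid.

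For part $(b)$, fix $\calA, \calS$ Sudoku grids with $\calS$ a solution of $\calA$, i.e.\ $\calS$ full and $S_{ij} \subseteq A_{ij}$ for all $i,j$. Part $(a)$ already gives that $\theta(\calS)$ is full, so the remaining task is to show compatibility $(\theta(\calS))_{ij} \subseteq (\theta(\calA))_{ij}$ for all $i,j$. Again I would check this elementary transformation by elementary transformation. For the reflections and shuffles the compatibility at position $(i,j)$ of the output reduces to the compatibility of $\calS$ and $\calA$ at some other position $(i',j')$, which is given. For a relabelling $\lambda_f$, one has $(\lambda_f(\calS))_{ij} = \{f(d) : d \in S_{ij}\} \subseteq \{f(d) : d \in A_{ij}\} = (\lambda_f(\calA))_{ij}$, since applying a function elementwise is monotone with respect to inclusion. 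A straightforward induction on the number of factors in $\theta$ then lifts the property to arbitrary Sudoku transformations.

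There is no real obstacle here; the only thing to be careful about is that one does not try to apply an elementary transformation to a grid that is not a Sudoku grid (since the elementary transformations are defined only on Sudoku grids), but this never happens in the induction because each step produces a Sudoku grid by definition, which is exactly the induction hypothesis needed for the next step.
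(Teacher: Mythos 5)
The paper does not actually prove this lemma---it defers it to Exercise~\ref{sudoku.zad.transf-full-grid}---so there is no official argument to compare against; your proposal is a correct and complete solution along exactly the lines the author clearly intends: reduce to a single elementary transformation, observe that geometric transformations and shuffles merely reindex cells while a relabelling applies a bijection elementwise (preserving both cell cardinality and the inclusions $S_{ij} \subseteq A_{ij}$), and lift to arbitrary chains by induction on the number of factors. Your closing remark about never leaving the class of Sudoku grids during the induction is the right point of care given how Definitions~\ref{sudoku.def.geom} and~\ref{sudoku.def.shuf-relab} are phrased.
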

\begin{proof}
  Exercise~\ref{sudoku.zad.transf-full-grid}.
\end{proof}

\begin{THM}[Gurth’s Symmetrical Placement]\label{sudoku.thm.gurth}
  Let $\calA$ and $\calS$ be Sudoku grids. If $\theta$ is an automorphism of $\calA$ and $\calS$ is a unique solution of $\calA$
  then $\theta$ is an automorphism of $\calS$.
\end{THM}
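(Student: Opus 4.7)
The plan is to exploit the uniqueness hypothesis together with Lemma~\ref{sudoku.lem.transf-full-grid}: once we know that $\theta(\calS)$ is still \emph{a} solution of $\calA$, uniqueness forces $\theta(\calS) = \calS$, which is exactly what it means for $\theta$ to be an automorphism of $\calS$.

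First I would apply $\theta$ to the grid $\calS$. Part $(a)$ of Lemma~\ref{sudoku.lem.transf-full-grid} says that Sudoku transformations take full Sudoku grids to full Sudoku grids, so $\theta(\calS)$ is a full Sudoku grid. Next I would use part $(b)$ of the same lemma: since $\calS$ is a solution of $\calA$, its image $\theta(\calS)$ is a solution of $\theta(\calA)$. Here is where the hypothesis that $\theta$ is an automorphism of $\calA$ enters: by definition $\theta(\calA) = \calA$, and therefore $\theta(\calS)$ is a solution of $\calA$ itself.

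Finally, the uniqueness hypothesis does the rest. We have exhibited two full Sudoku grids, namely $\calS$ and $\theta(\calS)$, each of which is a solution of $\calA$. Since $\calS$ is assumed to be the \emph{unique} solution of $\calA$, we must have $\theta(\calS) = \calS$, i.e.\ $\theta$ is an automorphism of $\calS$.

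There really is no main obstacle beyond setting up the right chain of equalities: the heavy lifting has been done in Lemma~\ref{sudoku.lem.transf-full-grid} (where one checks that each elementary Sudoku transformation preserves fullness and the solution relation, and this extends to arbitrary compositions). The only place one might slip up is in conflating the two roles of $\theta$: it is given to us as an automorphism of $\calA$, but what we need is that it also fixes $\calS$, and this transfer is precisely what uniqueness supplies.
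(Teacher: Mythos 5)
Your proof is correct and follows exactly the same route as the paper: apply Lemma~\ref{sudoku.lem.transf-full-grid}~$(b)$ to conclude that $\theta(\calS)$ is a solution of $\theta(\calA)=\calA$, then invoke uniqueness to get $\theta(\calS)=\calS$. The extra appeal to part~$(a)$ is harmless but not needed, since being a solution already entails being a full grid.
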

\begin{proof}
  Assume that $\calS$ is a unique solution of $\calA$ and that $\theta$ is an automorphism of $\calA$. Then $\theta(\calA) = \calA$.
  Since $\calS$ is a solution of $\calA$, Lemma~\ref{sudoku.lem.transf-full-grid}~$(b)$ gives us that
  $\theta(\calS)$ is then a solution of $\theta(\calA) = \calA$. So, both $\calS$ and $\theta(\calS)$ are solutions of $\calA$.
  However, $\calS$ is a unique solution of $\calA$, so $\theta(\calS) = \calS$, and $\theta$ is an automorphism of $\calS$.
\end{proof}

As an immediate corollary of Theorem~\ref{sudoku.thm.deduc=>uniq} we now have:

\begin{COR}[Gurth’s Symmetrical Placement -- Deducible version]
  Let $\calA$ and $\calS$ be Sudoku grids. If $\theta$ is an automorphism of $\calA$ and $\calS$ is a logically deducible solution of $\calA$
  then $\theta$ is an automorphism of $\calS$.
\end{COR}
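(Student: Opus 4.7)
The plan is to observe that this corollary follows immediately by chaining two results already established in the paper: Theorem~\ref{sudoku.thm.deduc=>uniq} (Deducibility implies uniqueness) and Theorem~\ref{sudoku.thm.gurth} (Gurth's Symmetrical Placement). The content of the corollary is simply the observation that ``logically deducible solution'' is at least as strong a hypothesis as ``unique solution,'' so anything proved for the latter transfers to the former.

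Concretely, I would proceed as follows. First, assume the hypotheses: $\theta$ is an automorphism of $\calA$, and $\calS$ is a logically deducible solution of $\calA$, meaning $\Cns(\calA) \vdash \Cns(\calS)$. Second, invoke Theorem~\ref{sudoku.thm.deduc=>uniq}, which asserts that every logically deducible solution is the unique solution of $\calA$. Hence $\calS$ is the unique solution of $\calA$. Third, feed this conclusion into Theorem~\ref{sudoku.thm.gurth}: since $\theta$ is an automorphism of $\calA$ and $\calS$ is the unique solution of $\calA$, $\theta$ is an automorphism of $\calS$, which is exactly what we needed.

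There is essentially no obstacle here: the corollary is a one-line deduction, and all the real work was done in Theorem~\ref{sudoku.thm.gurth} (which relied on Lemma~\ref{sudoku.lem.transf-full-grid}~$(b)$ that a Sudoku transformation sends a solution of $\calA$ to a solution of $\theta(\calA)$) and Theorem~\ref{sudoku.thm.deduc=>uniq} (whose proof in turn used soundness together with the observation that a witness of non-uniqueness would force a contradictory deduction). The only thing to be careful about is the direction of the implication used when invoking Theorem~\ref{sudoku.thm.deduc=>uniq}: we need \emph{deducibility implies uniqueness}, not the converse, and that is precisely the direction stated there.
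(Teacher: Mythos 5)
Your proposal is correct and matches the paper exactly: the paper presents this corollary as an immediate consequence of Theorem~\ref{sudoku.thm.deduc=>uniq} (deducibility implies uniqueness) combined with Theorem~\ref{sudoku.thm.gurth}, which is precisely the two-step chain you describe. Your remark about needing the ``deducibility implies uniqueness'' direction (and not its converse) is the right point of care, and it is satisfied.
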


\begin{EX}
  Let us now demonstrate Gurth’s Symmetrical Placement in action. 
  In the Cracking the Cryptic's video ``The Video They Said We Should NEVER
  Release''\footnote{https://www.youtube.com/watch?v=iiQ9MXrfb7A} at some point (timestamp 25:48)
  Simon reaches the position in~Fig.~\ref{sudoku.fig.gurth-ex} (with pencilmarks irrelevant for the discussion omitted; also
  it is safe to ignore the blue line along the auxiliary diagonal).

  \emph{Assume that the puzzle has a unique solution} and let us show that $r_9c_9$ cannot be~9.
  Suppose, to the contrary, that $r_9c_9$ contains the digit~9. Then 
  $$
    \theta = \overline \delta \circ \lambda_f \text{\quad where\quad} f = \begin{pmatrix}
      1 & 2 & 3 & 4 & 5 & 6 & 7 & 8 & 9\\
      1 & 2 & 3 & 5 & 4 & 7 & 6 & 9 & 8
    \end{pmatrix}
  $$
  is an automorphism of the grid with 9 filled in $r_9c_9$.
  Gurth's Symmetrical Placement Theorem then ensures that $\theta$ is an automorphism of
  the solution of the puzzle. Now, look at the cell $r_6c_4$ whose only
  options are 8 or~9. Since $\overline \delta$ takes $r_6c_4$ to itself and $\theta$ is an automorphism
  of the solution of the puzzle, it follows that $\lambda_f(8) = 8$ in case $r_6c_4$ contains 8 in the solved puzzle,
  or $\lambda_f(9) = 9$ in case $r_6c_4$ contains 9 in the solved puzzle. But this is not possible because
  $\lambda_f(8) = 9$ and $\lambda_f(9) = 8$. Contradiction! Therefore, $r_9c_9$ cannot be~9.
\end{EX}

\begin{figure}
  \centering
  \def\f#1{{\Large\textbf{#1}}}
  \def\a#1#2{$\substack{#1 \; #2}$}
\begin{pgfpicture}
  \pgfsetxvec{\pgfpoint{\acadpgfunit}{0pt}}
  \pgfsetyvec{\pgfpoint{0pt}{\acadpgfunit}}
  \pgfsetlinewidth{\acadpgflinewidth}
  \pgftransformshift{\pgfpointxy{25.0}{50.0}}

  \begin{pgfscope}
    \pgfsetlinewidth{0.50mm}
    \pgfpathmoveto{\pgfpointxy{0.0}{0.0}}
    \pgfpathlineto{\pgfpointxy{0.0}{900.0}}
    \pgfusepath{stroke}
  \end{pgfscope}
  \begin{pgfscope}
    \pgfsetlinewidth{0.50mm}
    \pgfpathmoveto{\pgfpointxy{0.0}{900.0}}
    \pgfpathlineto{\pgfpointxy{900.0}{900.0}}
    \pgfusepath{stroke}
  \end{pgfscope}
  \begin{pgfscope}
    \pgfpathmoveto{\pgfpointxy{100.0}{0.0}}
    \pgfpathlineto{\pgfpointxy{100.0}{900.0}}
    \pgfusepath{stroke}
  \end{pgfscope}
  \begin{pgfscope}
    \pgfpathmoveto{\pgfpointxy{200.0}{0.0}}
    \pgfpathlineto{\pgfpointxy{200.0}{900.0}}
    \pgfusepath{stroke}
  \end{pgfscope}
  \begin{pgfscope}
    \pgfsetlinewidth{0.50mm}
    \pgfpathmoveto{\pgfpointxy{300.0}{0.0}}
    \pgfpathlineto{\pgfpointxy{300.0}{900.0}}
    \pgfusepath{stroke}
  \end{pgfscope}
  \begin{pgfscope}
    \pgfpathmoveto{\pgfpointxy{0.0}{800.0}}
    \pgfpathlineto{\pgfpointxy{900.0}{800.0}}
    \pgfusepath{stroke}
  \end{pgfscope}
  \begin{pgfscope}
    \pgfpathmoveto{\pgfpointxy{400.0}{0.0}}
    \pgfpathlineto{\pgfpointxy{400.0}{900.0}}
    \pgfusepath{stroke}
  \end{pgfscope}
  \begin{pgfscope}
    \pgfpathmoveto{\pgfpointxy{500.0}{0.0}}
    \pgfpathlineto{\pgfpointxy{500.0}{900.0}}
    \pgfusepath{stroke}
  \end{pgfscope}
  \begin{pgfscope}
    \pgfsetlinewidth{0.50mm}
    \pgfpathmoveto{\pgfpointxy{600.0}{0.0}}
    \pgfpathlineto{\pgfpointxy{600.0}{900.0}}
    \pgfusepath{stroke}
  \end{pgfscope}
  \begin{pgfscope}
    \pgfpathmoveto{\pgfpointxy{700.0}{0.0}}
    \pgfpathlineto{\pgfpointxy{700.0}{900.0}}
    \pgfusepath{stroke}
  \end{pgfscope}
  \begin{pgfscope}
    \pgfpathmoveto{\pgfpointxy{800.0}{0.0}}
    \pgfpathlineto{\pgfpointxy{800.0}{900.0}}
    \pgfusepath{stroke}
  \end{pgfscope}
  \begin{pgfscope}
    \pgfsetlinewidth{0.50mm}
    \pgfpathmoveto{\pgfpointxy{900.0}{0.0}}
    \pgfpathlineto{\pgfpointxy{900.0}{900.0}}
    \pgfusepath{stroke}
  \end{pgfscope}
  \begin{pgfscope}
    \pgfpathmoveto{\pgfpointxy{0.0}{700.0}}
    \pgfpathlineto{\pgfpointxy{900.0}{700.0}}
    \pgfusepath{stroke}
  \end{pgfscope}
  \begin{pgfscope}
    \pgfsetlinewidth{0.50mm}
    \pgfpathmoveto{\pgfpointxy{0.0}{600.0}}
    \pgfpathlineto{\pgfpointxy{900.0}{600.0}}
    \pgfusepath{stroke}
  \end{pgfscope}
  \begin{pgfscope}
    \pgfpathmoveto{\pgfpointxy{0.0}{500.0}}
    \pgfpathlineto{\pgfpointxy{900.0}{500.0}}
    \pgfusepath{stroke}
  \end{pgfscope}
  \begin{pgfscope}
    \pgfpathmoveto{\pgfpointxy{0.0}{400.0}}
    \pgfpathlineto{\pgfpointxy{900.0}{400.0}}
    \pgfusepath{stroke}
  \end{pgfscope}
  \begin{pgfscope}
    \pgfsetlinewidth{0.50mm}
    \pgfpathmoveto{\pgfpointxy{0.0}{300.0}}
    \pgfpathlineto{\pgfpointxy{900.0}{300.0}}
    \pgfusepath{stroke}
  \end{pgfscope}
  \begin{pgfscope}
    \pgfpathmoveto{\pgfpointxy{0.0}{200.0}}
    \pgfpathlineto{\pgfpointxy{900.0}{200.0}}
    \pgfusepath{stroke}
  \end{pgfscope}
  \begin{pgfscope}
    \pgfpathmoveto{\pgfpointxy{0.0}{100.0}}
    \pgfpathlineto{\pgfpointxy{900.0}{100.0}}
    \pgfusepath{stroke}
  \end{pgfscope}
  \begin{pgfscope}
    \pgfsetlinewidth{0.50mm}
    \pgfpathmoveto{\pgfpointxy{0.0}{0.0}}
    \pgfpathlineto{\pgfpointxy{900.0}{0.0}}
    \pgfusepath{stroke}
  \end{pgfscope}
  \begin{pgfscope}
    \pgfsetstrokecolor{acad2004aci140}
    \pgfpathmoveto{\pgfpointxy{900.0}{900.0}}
    \pgfpathlineto{\pgfpointxy{0.0}{0.0}}
    \pgfusepath{stroke}
  \end{pgfscope}
  \pgftext[at={\pgfpointxy{50.0}{850.0}}]{\f8}
  \pgftext[at={\pgfpointxy{250.0}{850.0}}]{\f3}
  \pgftext[at={\pgfpointxy{350.0}{850.0}}]{\f7}
  \pgftext[at={\pgfpointxy{450.0}{850.0}}]{\f6}
  \pgftext[at={\pgfpointxy{150.0}{750.0}}]{\f2}
  \pgftext[at={\pgfpointxy{650.0}{750.0}}]{\f6}
  \pgftext[at={\pgfpointxy{50.0}{650.0}}]{\f1}
  \pgftext[at={\pgfpointxy{350.0}{650.0}}]{\f5}
  \pgftext[at={\pgfpointxy{750.0}{650.0}}]{\f7}
  \pgftext[at={\pgfpointxy{50.0}{550.0}}]{\f5}
  \pgftext[at={\pgfpointxy{250.0}{550.0}}]{\f8}
  \pgftext[at={\pgfpointxy{50.0}{450.0}}]{\f9}
  \pgftext[at={\pgfpointxy{850.0}{450.0}}]{\f7}
  \pgftext[at={\pgfpointxy{850.0}{350.0}}]{\f6}
  \pgftext[at={\pgfpointxy{650.0}{350.0}}]{\f4}
  \pgftext[at={\pgfpointxy{150.0}{250.0}}]{\f8}
  \pgftext[at={\pgfpointxy{250.0}{150.0}}]{\f9}
  \pgftext[at={\pgfpointxy{550.0}{250.0}}]{\f9}
  \pgftext[at={\pgfpointxy{550.0}{50.0}}]{\f4}
  \pgftext[at={\pgfpointxy{450.0}{50.0}}]{\f8}
  \pgftext[at={\pgfpointxy{850.0}{250.0}}]{\f3}
  \pgftext[at={\pgfpointxy{750.0}{150.0}}]{\f2}
  \pgftext[at={\pgfpointxy{650.0}{50.0}}]{\f1}
  \pgftext[at={\pgfpointxy{350.0}{350.0}}]{\a89}
  \pgftext[at={\pgfpointxy{850.0}{50.0}}]{\a59}
\end{pgfpicture}
  \caption{Application of Gurth’s Symmetrical Placement Theorem to eliminate 9 from $r_9c_9$}
  \label{sudoku.fig.gurth-ex}
\end{figure}

Let us now prove a simple corollary of Gurth’s Symmetrical Placement Theorem.

\begin{LEM}\label{sudoku.lem.geomaut}
  Let $\calS$ be a full Sudoku grid and let $\theta$ be an automorphism of $\calS$. If $\theta$ is a geometric transformation
  then $\theta = \id$.
\end{LEM}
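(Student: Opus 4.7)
The plan is to proceed by case analysis on the seven nontrivial geometric transformations listed in Definition~\ref{sudoku.def.geom}. For each such $\theta$, the goal is to exhibit two distinct cells $r_ic_j$ and $r_\ell c_m$ lying in a common row, column, or box that are swapped by $\theta$ (or that share a $\theta$-orbit). Once such a pair is produced, the assumption $\theta(\calS) = \calS$ forces $S_{ij} = S_{\ell m}$; but since $\calS$ is a full Sudoku grid it satisfies $\Sax$ and in particular $\calS \models r_ic_j \parallel r_\ell c_m$, which with both cells uniquely determined forces the two single digits to differ---contradiction. The preparatory step is to unfold each composite transformation into an explicit coordinate formula, starting from $\delta(\calA)_{ij} = A_{ji}$ and $\chi(\calA)_{ij} = A_{10-i,j}$.

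The concrete witnesses I have in mind are the following. For $\delta$, acting on $\calS$ as $S_{ij} = S_{ji}$, take $(1,2)$ and $(2,1)$, both in the top-left box. For $\chi$, acting as $S_{ij} = S_{10-i,j}$, take $(1,1)$ and $(9,1)$ in column $c_1$. For $\overline\chi = \delta\circ\chi\circ\delta$, acting as $S_{ij} = S_{i,10-j}$, take $(1,1)$ and $(1,9)$ in row $r_1$. For $\overline\delta = \overline\chi\circ\delta\circ\overline\chi$, acting as $S_{ij} = S_{10-j,10-i}$, take $(1,8)$ and $(2,9)$, both in the top-right box. For $\rho_1 = \delta\circ\chi$, acting as $S_{ij} = S_{10-j,i}$, take $(2,2)$ and $(8,2)$ in column $c_2$. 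For $\rho_2 = \rho_1\circ\rho_1$, acting as $S_{ij} = S_{10-i,10-j}$, take $(5,1)$ and $(5,9)$ in row $r_5$. And for $\rho_3 = \chi\circ\delta$, acting as $S_{ij} = S_{j,10-i}$, take $(2,2)$ and $(2,8)$ in row $r_2$. In each case the two coordinates visibly share a row, column, or box, and so the contradiction is immediate.

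There is essentially no serious obstacle in this argument. Once the coordinate action of each composite transformation has been computed from the definitions of $\delta$ and $\chi$---a routine exercise that was effectively foreshadowed in the preceding exercise block---the verification reduces to a one-line check for each of the seven cases. The only mild nuisance is organizing the seven coordinate computations so as to avoid repetition; no deeper idea is required, and in particular no appeal to Gurth's theorem or to any property of $\calS$ beyond $\calS \models \Sax$ is needed.
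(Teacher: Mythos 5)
Your proof is correct, but it takes a genuinely different route from the paper. The paper gives a single uniform argument: every geometric transformation maps box~5 into box~5 (Exercise~\ref{sudoku.zad.box5}); since a geometric transformation involves no relabelling and each digit occurs exactly once in box~5 of a full Sudoku grid, the automorphism condition $\theta(\calS)=\calS$ forces every cell of box~5 to be fixed, and the only geometric transformation fixing all nine cells of box~5 is $\id$. You instead run an exhaustive case analysis over the seven nontrivial geometric transformations, computing the coordinate action of each and exhibiting for each a pair of distinct cells in a common row, column or box whose contents are forced equal, contradicting $\calS \models \Sax$ (specifically the relevant $r_ic_j \parallel r_\ell c_m$ conjunct with both cells uniquely determined). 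I checked your seven coordinate formulas and witness pairs and they are all correct, and the case list is exhaustive since Definition~\ref{sudoku.def.geom} names exactly eight geometric transformations. The trade-off: your argument is entirely self-contained and mechanically verifiable, at the cost of seven separate computations; the paper's argument is shorter and conceptually cleaner (one invariance observation plus a counting argument on box~5), but delegates the invariance of box~5 to an exercise and still implicitly requires checking that no nontrivial geometric transformation fixes all of box~5 -- a case check comparable in size to yours.
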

\begin{proof}
  Assume that $\theta$ is a geometric transformation. Then it is easy to see that
  $\theta$ takes every cell in box~5 to some cell in box~5 (Exercise~\ref{sudoku.zad.box5}).
  Since $\theta$ makes no relabellings (by definition) it follows that $\theta$ takes every cell in box~5 to itself,
  and the only geometric transformation that fixes all the cells in box~5 is the identity. Therefore, $\theta = \id$.
\end{proof}

A \emph{geometric automorphism} of a Sudoku grid is any geometric transformation which is at an automorphism of the grid.
 
\begin{THM}\label{sudoku.thm.geomaut}
  Assume that a Sudoku grid $\calA$ has a deducible (or, equivalently, unique) solution.
  Then $\id$ is the only geometric automorphism of~$\calA$.
\end{THM}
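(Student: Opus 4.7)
The plan is to chain together two results that have already been established in the excerpt: Gurth's Symmetrical Placement Theorem (Theorem~\ref{sudoku.thm.gurth}) and Lemma~\ref{sudoku.lem.geomaut}. Corollary~\ref{sudoku.cor.quniq=deduc} assures us that ``deducible solution'' and ``unique solution'' are interchangeable hypotheses, so without loss of generality I would work with the unique solution formulation.

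First I would fix the unique solution $\calS$ of $\calA$, and let $\theta$ be an arbitrary geometric automorphism of $\calA$. The key observation is that all we have to prove is $\theta = \id$, and we have a ready-made tool (Lemma~\ref{sudoku.lem.geomaut}) that does exactly this, provided we can promote $\theta$ from being an automorphism of $\calA$ to being an automorphism of a \emph{full} Sudoku grid. That is precisely the service rendered by Gurth's Symmetrical Placement Theorem: since $\theta$ is an automorphism of $\calA$ and $\calS$ is the unique solution of $\calA$, Theorem~\ref{sudoku.thm.gurth} yields that $\theta$ is also an automorphism of $\calS$.

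Now $\calS$ is full and $\theta$ is geometric, so Lemma~\ref{sudoku.lem.geomaut} applies verbatim and forces $\theta = \id$. Since $\theta$ was an arbitrary geometric automorphism of $\calA$, this completes the proof.

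There is no real obstacle here -- the substantive work was done in establishing Gurth's theorem (which rests on uniqueness of the solution and Lemma~\ref{sudoku.lem.transf-full-grid}) and in the box-5 argument of Lemma~\ref{sudoku.lem.geomaut}. The only thing to be careful about is that the ``deducible'' and ``unique'' hypotheses are indeed equivalent, which is guaranteed by Corollary~\ref{sudoku.cor.quniq=deduc}, justifying the parenthetical phrasing in the statement.
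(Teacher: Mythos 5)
Your proof is correct and is essentially identical to the paper's: both invoke Gurth's Symmetrical Placement Theorem (Theorem~\ref{sudoku.thm.gurth}) to promote $\theta$ to an automorphism of the full solution $\calS$, and then apply Lemma~\ref{sudoku.lem.geomaut} to conclude $\theta = \id$. Your remark on the equivalence of the two hypotheses via Corollary~\ref{sudoku.cor.quniq=deduc} is a welcome explicit justification of the parenthetical in the statement.
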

\begin{proof}
  Let $\calS$ be a deducible solution of $\calA$ and let $\theta$ be a geometric automorphism of $\calA$.
  Theorem~\ref{sudoku.thm.gurth} then tells us that $\theta$ is an automorphism of $\calS$,
  so Lemma~\ref{sudoku.lem.geomaut} ensures that $\theta = \id$.
\end{proof}

Finally, let us weaken Gurth's Symmetrical Placement Theorem and thus justify the
practice of ``reasoning by symmetry''. Namely, if a conclusion can be reached using the
information contained in some part of the grid, then the ``symmetrical'' conclusion follows
immediately if we are given ``symmetrical'' information in the ``symmetrical'' part of the grid.
In order to make this feeling precise, we have to be able to apply Sudoku transformations to
Sudoku formulas. Note that this weaker result \emph{does not assume uniqueness of the solution!}

Let $\theta$ be a Sudoku transformation. By Theorem~\ref{sudoku.thm.normalform} there is a cell transformation $\xi$ and a relabelling
$\lambda$ such that $\theta = \xi \circ \lambda$. We have referred to this decomposition as the \emph{normal form} of the Sudoku
transformation.

\begin{DEF}\label{sudoku.def.theta-alpha}
  Let $\theta = \xi \circ \lambda$ be a Sudoku transformation in its normal form, where $\xi$ is a cell transformation and $\lambda$ is a relabelling.
  Then for a Sudoku formula  $\phi$ we define $\theta(\phi)$ as follows:
  \begin{itemize}\itemsep -1pt
    \item $\theta(\top) = \top$ and $\theta(\bot) = \bot$;
    \item if $\phi = r_i c_j \not\approx d$ then $\theta(\phi) = \xi(r_i c_j) \not\approx \lambda(d)$;
    \item if $\phi = \lnot \alpha$ then $\theta(\phi) = \lnot \theta(\alpha)$;
    \item if $\phi = \alpha \star \beta$, where $\star$ is one of the connectives $\land$, $\lor$, $\Rightarrow$, $\Leftrightarrow$
          then $\theta(\phi) = \theta(\alpha) \star \theta(\beta)$.
  \end{itemize}
  For a set of Sudoku formulas $\Phi$ let $\theta(\Phi) = \{\theta(\phi) : \phi \in \Phi\}$.
\end{DEF}

\begin{LEM}\label{sudoku.lem.thetaathetaalpha}
  Let $\calA$ be a Sudoku grid, $\alpha$ a Sudoku formula and $\theta$ a Sudoku transformation. Then
  
  $(a)$ $\calA \models \alpha$ if and only if $\theta(\calA) \models \theta(\alpha)$;

  $(b)$ $\theta(\Cns(\calA)) = \Cns(\theta(\calA))$.
\end{LEM}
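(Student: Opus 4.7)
The plan is to prove part $(a)$ by structural induction on the Sudoku formula $\alpha$, and to derive part $(b)$ as an easy corollary. First I would fix the normal form $\theta = \xi \circ \lambda$ with $\lambda = \lambda_f$ a relabelling and $\xi$ a cell transformation (which is a bijection on cells), and note that on cells $\theta$ acts as $\xi$, since $\lambda$ fixes every cell.

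The technical heart of the argument is a preliminary computation, which I would verify first: if $\xi(r_i c_j) = r_\ell c_m$, then
$$
(\theta(\calA))_{\ell m} = \{f(d) : d \in A_{ij}\}.
$$
This is checked by unpacking Definition~\ref{sudoku.def.shuf-relab}: $\lambda_f$ preserves the positions of cells but replaces each $A_{ij}$ by $\{f(d) : d \in A_{ij}\}$, while the cell transformation $\xi$ moves the content of $r_i c_j$ to $\xi(r_i c_j)$. The latter is easy to verify for each elementary cell transformation ($\delta$, $\chi$, $\sigma_f$, $\tau_f$) directly from the definitions, and it composes: if it holds for $\xi_1$ and $\xi_2$, it holds for $\xi_2 \circ \xi_1$.

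With this in hand, the base case of $(a)$ for a Sudoku proposition $\alpha = r_i c_j \not\approx d$ is immediate: $\calA \models \alpha$ iff $d \notin A_{ij}$, which by bijectivity of $f$ is equivalent to $f(d) \notin \{f(e) : e \in A_{ij}\} = (\theta(\calA))_{\ell m}$, i.e., $\theta(\calA) \models r_\ell c_m \not\approx f(d) = \theta(\alpha)$. The cases $\alpha = \top$ and $\alpha = \bot$ are trivial because $\theta$ fixes both symbols. The inductive steps are mechanical: Definition~\ref{sudoku.def.theta-alpha} has $\theta$ commute with every connective, and Definition~\ref{sudoku.def.models} evaluates each connective via the corresponding boolean operation, so the equivalence lifts through every inductive clause. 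For instance, if $\alpha = \beta \Rightarrow \gamma$, then $\theta(\alpha) = \theta(\beta) \Rightarrow \theta(\gamma)$, and the induction hypothesis applied separately to $\beta$ and $\gamma$ converts the semantic clause for $\calA$ into the one for $\theta(\calA)$.

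Part $(b)$ then follows from $(a)$ together with bijectivity. Since $\xi$ is a bijection on the $81$ cells and $f$ is a bijection on $\{1,\ldots,9\}$, every Sudoku proposition is uniquely of the form $\theta(\phi)$ for some Sudoku proposition $\phi$. For any such $\psi = \theta(\phi)$ we have $\psi \in \Cns(\theta(\calA))$ iff $\theta(\calA) \models \psi$ iff (by $(a)$) $\calA \models \phi$ iff $\phi \in \Cns(\calA)$ iff $\psi \in \theta(\Cns(\calA))$. The main obstacle is nothing conceptually deep but simply the careful bookkeeping in the preliminary computation for $(\theta(\calA))_{\ell m}$: one must keep straight the direction of the index substitutions in $\sigma_f$ and $\tau_f$, and the order in which $\xi$ and $\lambda$ are composed. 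Once this is pinned down, the rest of the proof is automatic.
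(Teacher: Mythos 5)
Your proof is correct. The paper itself relegates this lemma to Exercise~\ref{sudoku.zad.thetaathetaalpha}, whose hint prescribes a slightly different route: prove only the left-to-right implication of $(a)$ ``directly from Definition~\ref{sudoku.def.theta-alpha}'', and then obtain the converse by applying that implication to $\theta^{-1}$, $\theta(\calA)$ and $\theta(\alpha)$, as in the proof of Lemma~\ref{sudoku.lem.theta-ax}. You instead carry the full biconditional through a single structural induction. The two routes buy different things. The inverse trick is shorter on paper, but it quietly presupposes that the one-directional statement can be established by induction on its own --- and for the clauses $\lnot$ and $\Rightarrow$ it cannot: to push ``$\calA \models \lnot\beta$ implies $\theta(\calA)\models\lnot\theta(\beta)$'' through the induction you need the \emph{converse} of the hypothesis for $\beta$, so a one-directional induction stalls exactly where your biconditional induction sails through. (One can repair the hint's version by first proving the base-case biconditional and then invoking $\theta^{-1}$ only at the level of atomic propositions, but that is essentially your argument in disguise.) Your preliminary computation $(\theta(\calA))_{\ell m} = \{f(d): d\in A_{ij}\}$ for $\xi(r_ic_j)=r_\ell c_m$, together with injectivity of $f$ in the base case and bijectivity of $\theta$ on the set of $9^3$ Sudoku propositions for part $(b)$, is exactly the bookkeeping the exercise expects, and you have the direction of the index substitutions in $\sigma_f$ and $\tau_f$ right.
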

\begin{proof}
  Exercise~\ref{sudoku.zad.thetaathetaalpha}.
\end{proof}

\begin{LEM}\label{sudoku.lem.theta-ax}
  If $\alpha$ is a Sudoku axiom and $\theta$ is a Sudoku transformation then $\theta(\alpha)$ is also a Sudoku axiom.
\end{LEM}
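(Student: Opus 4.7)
The plan is to show directly that $\theta(\alpha)$ holds in every full Sudoku grid, using Lemma~\ref{sudoku.lem.thetaathetaalpha}~$(a)$ together with the fact that every Sudoku transformation is invertible.

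First I would verify that each elementary Sudoku transformation is a bijection whose inverse is again an elementary Sudoku transformation: the reflections $\delta$ and $\chi$ are self-inverse (so $\overline\chi$, $\overline\delta$ and the $\rho_i$ are built from invertible pieces); the $f$-shuffles $\sigma_f$, $\tau_f$ and the $f$-relabelling $\lambda_f$ are undone by $\sigma_{f^{-1}}$, $\tau_{f^{-1}}$, $\lambda_{f^{-1}}$, and $f^{-1}$ respects $\Pi$ whenever $f$ does. Consequently, for any Sudoku transformation $\theta = \theta_1 \circ \ldots \circ \theta_n$ the chain $\theta^{-1} = \theta_n^{-1} \circ \ldots \circ \theta_1^{-1}$ is also a Sudoku transformation and satisfies $\theta \circ \theta^{-1} = \theta^{-1} \circ \theta = \id$.

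Then the proof goes through in a few lines. Let $\calS$ be an arbitrary full Sudoku grid. Put $\calA = \theta^{-1}(\calS)$. By Lemma~\ref{sudoku.lem.transf-full-grid}~$(a)$ applied to the Sudoku transformation $\theta^{-1}$, the grid $\calA$ is again a full Sudoku grid. Since $\alpha$ is a Sudoku axiom, $\calA \models \alpha$. Lemma~\ref{sudoku.lem.thetaathetaalpha}~$(a)$ then yields $\theta(\calA) \models \theta(\alpha)$, that is, $\calS \models \theta(\alpha)$. Because $\calS$ was an arbitrary full Sudoku grid, $\theta(\alpha)$ holds in every full Sudoku grid and is therefore a Sudoku axiom.

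The only potential obstacle is the invertibility claim for $\theta$, and in particular that $\theta^{-1}$ is again a Sudoku transformation in the sense of Definition~\ref{sudoku.def.shuf-relab} (one must check that the inverse of a shuffle is a shuffle, i.e.\ that $f^{-1}$ respects the fundamental partition $\Pi$, which follows immediately from the definition since the two conditions defining ``respects $\Pi$'' are symmetric in $f$ and $f^{-1}$). Everything else is a one-line application of the preceding lemmas.
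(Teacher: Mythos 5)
Your proof is correct and follows essentially the same route as the paper: take an arbitrary full Sudoku grid $\calS$, pull it back along $\theta^{-1}$, apply the axiom there, and push forward with Lemma~\ref{sudoku.lem.thetaathetaalpha}~$(a)$. The only difference is that you explicitly verify that $\theta^{-1}$ is again a Sudoku transformation, a point the paper leaves implicit; this is a welcome bit of extra care, not a divergence in method.
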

\begin{proof}
  We have to show that $\theta(\alpha)$ holds in every full Sudoku grid. So, let $\calS$ be a full Sudoku grid and
  let $\theta^{-1}$ be the inverse of $\theta$. Then $\theta^{-1}(\calS)$ is a full Sudoku grid (Lemma~\ref{sudoku.zad.transf-full-grid}),
  so $\theta^{-1}(\calS) \models \alpha$ because $\alpha$ is a Sudoku axiom. But then Lemma~\ref{sudoku.lem.thetaathetaalpha}
  yields that $\theta(\theta^{-1}(\calS)) \models \theta(\alpha)$, that is,
  $\calS \models \theta(\alpha)$ because $\theta \circ \theta^{-1} = \id$.
\end{proof}

\begin{THM}[``Symmetric reasoning'']
  Let $\Phi$ be a set of Sudoku formulas, $\alpha$ a Sudoku formula and $\theta$ a Sudoku transformation.
  If $\Phi \vdash \alpha$ then $\theta(\Phi) \vdash \theta(\alpha)$.
\end{THM}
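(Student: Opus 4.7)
The plan is to proceed by induction on the length $n$ of a proof of $\alpha$ from $\Phi$, showing that applying $\theta$ pointwise to the proof sequence yields a valid proof of $\theta(\alpha)$ from $\theta(\Phi)$. Concretely, if $\phi_1, \phi_2, \ldots, \phi_n$ is a proof witnessing $\Phi \vdash \alpha$ with $\phi_n = \alpha$, I claim that $\theta(\phi_1), \theta(\phi_2), \ldots, \theta(\phi_n)$ is a proof witnessing $\theta(\Phi) \vdash \theta(\alpha)$.

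To verify this, I would consider each $\phi_i$ according to the three cases in the definition of deducibility. If $\phi_i$ is a Sudoku axiom, then Lemma~\ref{sudoku.lem.theta-ax} immediately gives that $\theta(\phi_i)$ is again a Sudoku axiom, hence a legal step. If $\phi_i \in \Phi$ is a premise, then $\theta(\phi_i) \in \theta(\Phi)$ by the very definition $\theta(\Phi) = \{\theta(\phi) : \phi \in \Phi\}$, so it is again a legal step. If $\phi_i$ is obtained by modus ponens from earlier lines $\phi_p$ and $\phi_q = \phi_p \Rightarrow \phi_i$, then by the clause for implication in Definition~\ref{sudoku.def.theta-alpha} we have
$$
\theta(\phi_q) = \theta(\phi_p \Rightarrow \phi_i) = \theta(\phi_p) \Rightarrow \theta(\phi_i),
$$
so $\theta(\phi_i)$ follows from $\theta(\phi_p)$ and $\theta(\phi_q)$ by modus ponens; both of these appear earlier in the transformed sequence, so this is also a legal step.

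Since every line of the transformed sequence is justified by the same rule that justified the corresponding original line, and the last line is $\theta(\phi_n) = \theta(\alpha)$, we obtain $\theta(\Phi) \vdash \theta(\alpha)$.

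The only nontrivial ingredient is the preservation of axioms under $\theta$, which is already handled by Lemma~\ref{sudoku.lem.theta-ax}; the compatibility of $\theta$ with the logical connective $\Rightarrow$ needed for the modus ponens step is built into Definition~\ref{sudoku.def.theta-alpha}. Thus no real obstacle remains: the proof is a routine, line-by-line translation of the original derivation through $\theta$, and a brief induction on $n$ is enough to formalize it cleanly.
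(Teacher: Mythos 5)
Your argument is correct and is essentially identical to the proof in the paper: both apply $\theta$ line by line to a derivation of $\alpha$ from $\Phi$ and check the three cases (axiom via Lemma~\ref{sudoku.lem.theta-ax}, premise via the definition of $\theta(\Phi)$, and modus ponens via the implication clause of Definition~\ref{sudoku.def.theta-alpha}). No discrepancies to report.
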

\begin{proof}
  Assume that $\Phi \vdash \alpha$. Then there is a proof in Sudoku logic
  $$
    \phi_1, \; \phi_2, \; \ldots, \; \phi_n
  $$
  of $\alpha = \phi_n$ from premisses in $\Phi$. Let us show that 
  \begin{equation}\label{sudoku.eq.proof-theta-phi}
    \theta(\phi_1), \; \theta(\phi_2), \; \ldots, \; \theta(\phi_n)
  \end{equation}
  is a proof of $\theta(\alpha)$ from premisses in $\theta(\Phi)$. Clearly, $\theta(\phi_n) = \theta(\alpha)$ because $\phi_n = \alpha$, so
  the main job here is to prove that \eqref{sudoku.eq.proof-theta-phi} is actually a proof. And this is straightforward:
  \begin{itemize}
    \item if $\phi_i$ is a Sudoku axiom, then $\theta(\phi_i)$ is also a Sudoku axiom by Lemma~\ref{sudoku.lem.theta-ax};
    \item if $\phi_i \in \Phi$ is a premise then $\theta(\phi_i) \in \theta(\Phi)$ trivially;
    \item if $\phi_i$ is obtained by modus ponens from $\phi_p$ and $\phi_q$ for some $p, q < i$ then without loss of generality
          we can assume that $\phi_q$ is of the form $\phi_p \Rightarrow \phi_i$, so
          $$
            \theta(\phi_q) = \theta(\phi_p \Rightarrow \phi_i) = (\theta(\phi_p) \Rightarrow \theta(\phi_i))
          $$
          by Definition~\ref{sudoku.def.theta-alpha}, whence follows that $\theta(\phi_i)$ can be obtained by modus ponens from
          $\theta(\phi_p)$ and $\theta(\phi_q)$.
  \end{itemize}
  This completes the proof.
\end{proof}

\clearpage

\begin{ZAD}\label{sudoku.zad.transf-full-grid}
  Prove Lemma~\ref{sudoku.lem.transf-full-grid}.
\end{ZAD}

\begin{ZAD}
  Prove that the following Sudoku grid \emph{does not} have a deducible solution:
  \begin{center}
\begin{pgfpicture}
  \pgfsetxvec{\pgfpoint{\acadpgfunit}{0pt}}
  \pgfsetyvec{\pgfpoint{0pt}{\acadpgfunit}}
  \pgfsetlinewidth{\acadpgflinewidth}
  \pgftransformshift{\pgfpointxy{50.0}{62.5}}

  \begin{pgfscope}
    \pgfsetlinewidth{0.50mm}
    \pgfpathmoveto{\pgfpointxy{0.0}{0.0}}
    \pgfpathlineto{\pgfpointxy{0.0}{450.0}}
    \pgfusepath{stroke}
  \end{pgfscope}
  \begin{pgfscope}
    \pgfpathmoveto{\pgfpointxy{50.0}{0.0}}
    \pgfpathlineto{\pgfpointxy{50.0}{450.0}}
    \pgfusepath{stroke}
  \end{pgfscope}
  \begin{pgfscope}
    \pgfpathmoveto{\pgfpointxy{0.0}{50.0}}
    \pgfpathlineto{\pgfpointxy{450.0}{50.0}}
    \pgfusepath{stroke}
  \end{pgfscope}
  \begin{pgfscope}
    \pgfsetlinewidth{0.50mm}
    \pgfpathmoveto{\pgfpointxy{0.0}{0.0}}
    \pgfpathlineto{\pgfpointxy{450.0}{0.0}}
    \pgfusepath{stroke}
  \end{pgfscope}
  \begin{pgfscope}
    \pgfpathmoveto{\pgfpointxy{100.0}{0.0}}
    \pgfpathlineto{\pgfpointxy{100.0}{450.0}}
    \pgfusepath{stroke}
  \end{pgfscope}
  \begin{pgfscope}
    \pgfsetlinewidth{0.50mm}
    \pgfpathmoveto{\pgfpointxy{150.0}{0.0}}
    \pgfpathlineto{\pgfpointxy{150.0}{450.0}}
    \pgfusepath{stroke}
  \end{pgfscope}
  \begin{pgfscope}
    \pgfpathmoveto{\pgfpointxy{200.0}{0.0}}
    \pgfpathlineto{\pgfpointxy{200.0}{450.0}}
    \pgfusepath{stroke}
  \end{pgfscope}
  \begin{pgfscope}
    \pgfpathmoveto{\pgfpointxy{250.0}{0.0}}
    \pgfpathlineto{\pgfpointxy{250.0}{450.0}}
    \pgfusepath{stroke}
  \end{pgfscope}
  \begin{pgfscope}
    \pgfsetlinewidth{0.50mm}
    \pgfpathmoveto{\pgfpointxy{300.0}{0.0}}
    \pgfpathlineto{\pgfpointxy{300.0}{450.0}}
    \pgfusepath{stroke}
  \end{pgfscope}
  \begin{pgfscope}
    \pgfpathmoveto{\pgfpointxy{350.0}{0.0}}
    \pgfpathlineto{\pgfpointxy{350.0}{450.0}}
    \pgfusepath{stroke}
  \end{pgfscope}
  \begin{pgfscope}
    \pgfpathmoveto{\pgfpointxy{400.0}{0.0}}
    \pgfpathlineto{\pgfpointxy{400.0}{450.0}}
    \pgfusepath{stroke}
  \end{pgfscope}
  \begin{pgfscope}
    \pgfsetlinewidth{0.50mm}
    \pgfpathmoveto{\pgfpointxy{450.0}{0.0}}
    \pgfpathlineto{\pgfpointxy{450.0}{450.0}}
    \pgfusepath{stroke}
  \end{pgfscope}
  \begin{pgfscope}
    \pgfpathmoveto{\pgfpointxy{0.0}{100.0}}
    \pgfpathlineto{\pgfpointxy{450.0}{100.0}}
    \pgfusepath{stroke}
  \end{pgfscope}
  \begin{pgfscope}
    \pgfsetlinewidth{0.50mm}
    \pgfpathmoveto{\pgfpointxy{0.0}{150.0}}
    \pgfpathlineto{\pgfpointxy{450.0}{150.0}}
    \pgfusepath{stroke}
  \end{pgfscope}
  \begin{pgfscope}
    \pgfpathmoveto{\pgfpointxy{0.0}{200.0}}
    \pgfpathlineto{\pgfpointxy{450.0}{200.0}}
    \pgfusepath{stroke}
  \end{pgfscope}
  \begin{pgfscope}
    \pgfpathmoveto{\pgfpointxy{0.0}{250.0}}
    \pgfpathlineto{\pgfpointxy{450.0}{250.0}}
    \pgfusepath{stroke}
  \end{pgfscope}
  \begin{pgfscope}
    \pgfsetlinewidth{0.50mm}
    \pgfpathmoveto{\pgfpointxy{0.0}{300.0}}
    \pgfpathlineto{\pgfpointxy{450.0}{300.0}}
    \pgfusepath{stroke}
  \end{pgfscope}
  \begin{pgfscope}
    \pgfpathmoveto{\pgfpointxy{0.0}{350.0}}
    \pgfpathlineto{\pgfpointxy{450.0}{350.0}}
    \pgfusepath{stroke}
  \end{pgfscope}
  \begin{pgfscope}
    \pgfpathmoveto{\pgfpointxy{0.0}{400.0}}
    \pgfpathlineto{\pgfpointxy{450.0}{400.0}}
    \pgfusepath{stroke}
  \end{pgfscope}
  \begin{pgfscope}
    \pgfsetlinewidth{0.50mm}
    \pgfpathmoveto{\pgfpointxy{0.0}{450.0}}
    \pgfpathlineto{\pgfpointxy{450.0}{450.0}}
    \pgfusepath{stroke}
  \end{pgfscope}
  \pgftext[at={\pgfpointxy{75.0}{425.0}}]{2}
  \pgftext[at={\pgfpointxy{125.0}{425.0}}]{3}
  \pgftext[at={\pgfpointxy{425.0}{375.0}}]{2}
  \pgftext[at={\pgfpointxy{425.0}{325.0}}]{3}
  \pgftext[at={\pgfpointxy{25.0}{125.0}}]{3}
  \pgftext[at={\pgfpointxy{25.0}{75.0}}]{2}
  \pgftext[at={\pgfpointxy{325.0}{25.0}}]{3}
  \pgftext[at={\pgfpointxy{375.0}{25.0}}]{2}
  \pgftext[at={\pgfpointxy{125.0}{375.0}}]{1}
  \pgftext[at={\pgfpointxy{375.0}{325.0}}]{1}
  \pgftext[at={\pgfpointxy{325.0}{75.0}}]{1}
  \pgftext[at={\pgfpointxy{75.0}{125.0}}]{1}
  \pgftext[at={\pgfpointxy{125.0}{275.0}}]{6}
  \pgftext[at={\pgfpointxy{275.0}{325.0}}]{6}
  \pgftext[at={\pgfpointxy{325.0}{175.0}}]{6}
  \pgftext[at={\pgfpointxy{175.0}{125.0}}]{6}
  \pgftext[at={\pgfpointxy{125.0}{25.0}}]{7}
  \pgftext[at={\pgfpointxy{25.0}{325.0}}]{7}
  \pgftext[at={\pgfpointxy{325.0}{425.0}}]{7}
  \pgftext[at={\pgfpointxy{425.0}{125.0}}]{7}
  \pgftext[at={\pgfpointxy{125.0}{75.0}}]{5}
  \pgftext[at={\pgfpointxy{75.0}{325.0}}]{5}
  \pgftext[at={\pgfpointxy{325.0}{375.0}}]{5}
  \pgftext[at={\pgfpointxy{375.0}{125.0}}]{5}
  \pgftext[at={\pgfpointxy{225.0}{225.0}}]{6}
\end{pgfpicture}
  \end{center}
  (Hint: Use Theorem~\ref{sudoku.thm.geomaut}.)
\end{ZAD}

\begin{ZAD}
  Prove that in the following Sudoku grid\footnote{see https://www.youtube.com/watch?v=ac7Y97t2Wns}
  each cell on the main diagonal can contain only one of the digits 2, 4, 8:
  \begin{center}
\begin{pgfpicture}
  \pgfsetxvec{\pgfpoint{\acadpgfunit}{0pt}}
  \pgfsetyvec{\pgfpoint{0pt}{\acadpgfunit}}
  \pgfsetlinewidth{\acadpgflinewidth}
  \pgftransformshift{\pgfpointxy{12.5}{12.5}}

  \begin{pgfscope}
    \pgfsetlinewidth{0.50mm}
    \pgfpathmoveto{\pgfpointxy{0.0}{0.0}}
    \pgfpathlineto{\pgfpointxy{0.0}{450.0}}
    \pgfusepath{stroke}
  \end{pgfscope}
  \begin{pgfscope}
    \pgfpathmoveto{\pgfpointxy{50.0}{0.0}}
    \pgfpathlineto{\pgfpointxy{50.0}{450.0}}
    \pgfusepath{stroke}
  \end{pgfscope}
  \begin{pgfscope}
    \pgfpathmoveto{\pgfpointxy{0.0}{50.0}}
    \pgfpathlineto{\pgfpointxy{450.0}{50.0}}
    \pgfusepath{stroke}
  \end{pgfscope}
  \begin{pgfscope}
    \pgfsetlinewidth{0.50mm}
    \pgfpathmoveto{\pgfpointxy{0.0}{0.0}}
    \pgfpathlineto{\pgfpointxy{450.0}{0.0}}
    \pgfusepath{stroke}
  \end{pgfscope}
  \begin{pgfscope}
    \pgfpathmoveto{\pgfpointxy{100.0}{0.0}}
    \pgfpathlineto{\pgfpointxy{100.0}{450.0}}
    \pgfusepath{stroke}
  \end{pgfscope}
  \begin{pgfscope}
    \pgfsetlinewidth{0.50mm}
    \pgfpathmoveto{\pgfpointxy{150.0}{0.0}}
    \pgfpathlineto{\pgfpointxy{150.0}{450.0}}
    \pgfusepath{stroke}
  \end{pgfscope}
  \begin{pgfscope}
    \pgfpathmoveto{\pgfpointxy{200.0}{0.0}}
    \pgfpathlineto{\pgfpointxy{200.0}{450.0}}
    \pgfusepath{stroke}
  \end{pgfscope}
  \begin{pgfscope}
    \pgfpathmoveto{\pgfpointxy{250.0}{0.0}}
    \pgfpathlineto{\pgfpointxy{250.0}{450.0}}
    \pgfusepath{stroke}
  \end{pgfscope}
  \begin{pgfscope}
    \pgfsetlinewidth{0.50mm}
    \pgfpathmoveto{\pgfpointxy{300.0}{0.0}}
    \pgfpathlineto{\pgfpointxy{300.0}{450.0}}
    \pgfusepath{stroke}
  \end{pgfscope}
  \begin{pgfscope}
    \pgfpathmoveto{\pgfpointxy{350.0}{0.0}}
    \pgfpathlineto{\pgfpointxy{350.0}{450.0}}
    \pgfusepath{stroke}
  \end{pgfscope}
  \begin{pgfscope}
    \pgfpathmoveto{\pgfpointxy{400.0}{0.0}}
    \pgfpathlineto{\pgfpointxy{400.0}{450.0}}
    \pgfusepath{stroke}
  \end{pgfscope}
  \begin{pgfscope}
    \pgfsetlinewidth{0.50mm}
    \pgfpathmoveto{\pgfpointxy{450.0}{0.0}}
    \pgfpathlineto{\pgfpointxy{450.0}{450.0}}
    \pgfusepath{stroke}
  \end{pgfscope}
  \begin{pgfscope}
    \pgfpathmoveto{\pgfpointxy{0.0}{100.0}}
    \pgfpathlineto{\pgfpointxy{450.0}{100.0}}
    \pgfusepath{stroke}
  \end{pgfscope}
  \begin{pgfscope}
    \pgfsetlinewidth{0.50mm}
    \pgfpathmoveto{\pgfpointxy{0.0}{150.0}}
    \pgfpathlineto{\pgfpointxy{450.0}{150.0}}
    \pgfusepath{stroke}
  \end{pgfscope}
  \begin{pgfscope}
    \pgfpathmoveto{\pgfpointxy{0.0}{200.0}}
    \pgfpathlineto{\pgfpointxy{450.0}{200.0}}
    \pgfusepath{stroke}
  \end{pgfscope}
  \begin{pgfscope}
    \pgfpathmoveto{\pgfpointxy{0.0}{250.0}}
    \pgfpathlineto{\pgfpointxy{450.0}{250.0}}
    \pgfusepath{stroke}
  \end{pgfscope}
  \begin{pgfscope}
    \pgfsetlinewidth{0.50mm}
    \pgfpathmoveto{\pgfpointxy{0.0}{300.0}}
    \pgfpathlineto{\pgfpointxy{450.0}{300.0}}
    \pgfusepath{stroke}
  \end{pgfscope}
  \begin{pgfscope}
    \pgfpathmoveto{\pgfpointxy{0.0}{350.0}}
    \pgfpathlineto{\pgfpointxy{450.0}{350.0}}
    \pgfusepath{stroke}
  \end{pgfscope}
  \begin{pgfscope}
    \pgfpathmoveto{\pgfpointxy{0.0}{400.0}}
    \pgfpathlineto{\pgfpointxy{450.0}{400.0}}
    \pgfusepath{stroke}
  \end{pgfscope}
  \begin{pgfscope}
    \pgfsetlinewidth{0.50mm}
    \pgfpathmoveto{\pgfpointxy{0.0}{450.0}}
    \pgfpathlineto{\pgfpointxy{450.0}{450.0}}
    \pgfusepath{stroke}
  \end{pgfscope}
  \pgftext[at={\pgfpointxy{125.0}{375.0}}]{3}
  \pgftext[at={\pgfpointxy{425.0}{425.0}}]{2}
  \pgftext[at={\pgfpointxy{275.0}{425.0}}]{1}
  \pgftext[at={\pgfpointxy{75.0}{325.0}}]{5}
  \pgftext[at={\pgfpointxy{225.0}{325.0}}]{6}
  \pgftext[at={\pgfpointxy{375.0}{375.0}}]{4}
  \pgftext[at={\pgfpointxy{325.0}{325.0}}]{7}
  \pgftext[at={\pgfpointxy{125.0}{225.0}}]{7}
  \pgftext[at={\pgfpointxy{25.0}{175.0}}]{9}
  \pgftext[at={\pgfpointxy{375.0}{275.0}}]{7}
  \pgftext[at={\pgfpointxy{325.0}{225.0}}]{8}
  \pgftext[at={\pgfpointxy{425.0}{175.0}}]{1}
  \pgftext[at={\pgfpointxy{175.0}{275.0}}]{8}
  \pgftext[at={\pgfpointxy{275.0}{225.0}}]{3}
  \pgftext[at={\pgfpointxy{225.0}{175.0}}]{5}
  \pgftext[at={\pgfpointxy{125.0}{125.0}}]{6}
  \pgftext[at={\pgfpointxy{75.0}{75.0}}]{4}
  \pgftext[at={\pgfpointxy{25.0}{25.0}}]{2}
  \pgftext[at={\pgfpointxy{225.0}{125.0}}]{8}
  \pgftext[at={\pgfpointxy{175.0}{75.0}}]{6}
  \pgftext[at={\pgfpointxy{275.0}{25.0}}]{9}
  \pgftext[at={\pgfpointxy{325.0}{125.0}}]{2}
  \pgftext[at={\pgfpointxy{425.0}{75.0}}]{7}
  \pgftext[at={\pgfpointxy{375.0}{25.0}}]{6}
\end{pgfpicture}
  \end{center}
\end{ZAD}

\begin{ZAD}\label{sudoku.zad.box5}
  Prove that every geometric transformation takes every cell in box~5 to some cell in box~5.
\end{ZAD}

\begin{ZAD}
  $(a)$ Let $\theta = \xi \circ \lambda_f$ where $\xi$ is a reflection (see Definition~\ref{sudoku.def.geom}) and $\lambda_f$ is a relabelling.
  Then $f$ is an \emph{involution} (that is, $f \circ f = \id$). Moreover,
  there exist exactly three values $d \in \{1, 2, 3, 4, 5, 6, 7, 8, 9\}$ satisfying $f(d) = d$.
  (Hint: consider box~5).

  $(b)$ Let $\theta = \rho_i \circ \lambda_f$ where $i \in \{1, 2, 3\}$ (see Definition~\ref{sudoku.def.geom}) and $\lambda_f$ is a relabelling.
  Then there exists exactly one $d \in \{1, 2, 3, 4, 5, 6, 7, 8, 9\}$ satisfying $f(d) = d$.
  (Hint: consider box~5).
\end{ZAD}

\begin{ZAD}
  Let $\theta = \xi \circ \lambda_f$ where $\xi = \xi_1 \circ \ldots \circ \xi_n$ is a composition of several geometric transformations (see Definition~\ref{sudoku.def.geom}),
  and $\lambda_f$ is a relabelling.

  $(a)$ If there exists exactly one $d \in \{1, 2, 3, 4, 5, 6, 7, 8, 9\}$ satisfying $f(d) = d$ then $\xi$ is a rotation.

  $(b)$ If there exist exactly three values $d \in \{1, 2, 3, 4, 5, 6, 7, 8, 9\}$ satisfying $f(d) = d$ then $\xi$ is a reflection.

  $(c)$ If there exist more that three values $d \in \{1, 2, 3, 4, 5, 6, 7, 8, 9\}$ satisfying $f(d) = d$ then $\xi$ is the identity transformation.
\end{ZAD}

\begin{ZAD}\label{sudoku.zad.thetaathetaalpha}
  Prove Lemma~\ref{sudoku.lem.thetaathetaalpha}. (Hint: the implication from left to right in $(a)$ is a direct consequence of
  Definition~\ref{sudoku.def.theta-alpha}; the implication from right to left in $(a)$ follows from the former implication using the
  trick with $\theta^{-1}$ as in the proof of Lemma~\ref{sudoku.lem.theta-ax}; $(b)$ follows from $(a)$ straightforwardly.)
\end{ZAD}

\section{Concluding remarks}
\label{sudoku.sec.conclusion}

In this paper we have proposed a formal system of Sudoku logic that is sound and complete,
and is strong enough to support formal proofs of nontrivial Sudoku theorems such as
the Gurth's Symmetrical Placement Theorem, or the Corollary~\ref{sudoku.cor.quniq=deduc}
which claims that a Sudoku grid has a unique solution if and only if it has a deducible solution.

We shall now demonstrate that this system is so strong that it can even resolve the Uniqueness Controversy.
The Uniqueness Controversy revolves around a delicate question:
\begin{quote}
  \textbf{Uniqueness Controversy:}
  While searching for a deducible solution of a Sudoku,
  is one allowed to assume that the puzzle is uniquely solvable?
\end{quote}
There are solving techniques that rely on the assumption that the puzzle is uniquely solvable,
and in some cases using such techniques can significantly shorten the solving time. On the other hand,
many Sudoku puritans believe that assuming uniqueness
is unjustified unless we are explicitly told by the constructor
of the puzzle that it has a unique solution. Others, again, believe that
all logical deductions which assume that a puzzle has a unique solution can be found
by other means.

The system of Sudoku logic presented in this paper resolves the uniqueness assumption issue decisively:
once you are set off to find a \emph{logically deducible solution} of a Sudoku puzzle using Sudoku logic,
you can safely (and with no trepidation of heart) assume that the solution is unique. Namely,
the uniqueness assumption is \emph{not} an additional assumption, \emph{but an axiom of the Sudoku logic!}
Let us demonstrate this fact.

Let us write $r_i c_j \approx d$ as a shorthand for:
$$
  r_i c_j \approx d: \qquad \lnot(r_i c_j \not\approx d).
$$
The meaning of $r_i c_j \approx d$ is that ``the cell $r_i c_j$ could take value~$d$''.

For every full Sudoku grid $\calS = [S_{ij}]_{9 \times 9}$ where $S_{ij} = \{d_{ij}\}$, $1 \le i,j \le 9$,
let $\epsilon_\calS$ be the following formula:
$$
  \epsilon_\calS: \qquad \bigwedge_{1 \le i,j \le 9} r_i c_j \approx d_{ij}.
$$
If $\calA$ is a Sudoku grid, then $\calA \models \epsilon_\calS$ means ``$\calS$ could be a solution of $\calA$''.
Now, consider the following formula:
$$
  \Uniq: \qquad \bigwedge_{\calS' \ne \calS''} \lnot(\epsilon_{\calS'} \land \epsilon_{\calS''}),
$$
where the outer (big) conjunction ranges over all pairs of distinct full Sudoku grids $\calS'$ and $\calS''$.
The formula $\Uniq$ \emph{encodes uniqueness} -- it tells us that
for every pair of distinct full Sudoku grids $\calS'$ and $\calS''$,
it is not the case that both $\calS'$ and $\calS''$ could be solutions.

\begin{THM}
  $\Uniq$ is an axiom of the deductive system of Sudoku logic, that is, $\Uniq \in \Dax$.
\end{THM}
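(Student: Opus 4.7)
The plan is to unfold the definitions and observe that for a \emph{full} Sudoku grid $\calT$, the formula $\epsilon_\calS$ is satisfied by $\calT$ essentially only when $\calS = \calT$. Once this is established, the big conjunction defining $\Uniq$ is automatically satisfied because at most one conjunct $\epsilon_{\calS'}$, $\epsilon_{\calS''}$ can be true in $\calT$, so their conjunction is never true.

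First I would recall what it means for a formula to be a Sudoku axiom: $\phi \in \Dax$ iff $\calT \models \phi$ for every full Sudoku grid $\calT$. Hence it suffices to fix an arbitrary full Sudoku grid $\calT = [T_{ij}]_{9\times 9}$, write $T_{ij} = \{t_{ij}\}$, and verify that $\calT \models \lnot(\epsilon_{\calS'} \land \epsilon_{\calS''})$ for every pair of distinct full Sudoku grids $\calS' \ne \calS''$.

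The key observation is the following equivalence: for a full Sudoku grid $\calT$ and a full Sudoku grid $\calS = [\{d_{ij}\}]_{9\times 9}$,
\[
  \calT \models \epsilon_\calS \iff \calS = \calT.
\]
Indeed, by Definition~\ref{sudoku.def.models} we have $\calT \models r_ic_j \approx d$ iff $\calT \not\models r_ic_j \not\approx d$ iff $d \in T_{ij}$; since $T_{ij} = \{t_{ij}\}$, this is equivalent to $d = t_{ij}$. Therefore $\calT \models \epsilon_\calS$ iff $d_{ij} = t_{ij}$ for all $1 \le i,j \le 9$, which is exactly the statement $\calS = \calT$.

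From here the proof closes in one line: if $\calS' \ne \calS''$, then at least one of them differs from $\calT$, so by the equivalence at least one of $\calT \models \epsilon_{\calS'}$, $\calT \models \epsilon_{\calS''}$ fails, whence $\calT \not\models \epsilon_{\calS'} \land \epsilon_{\calS''}$, i.e., $\calT \models \lnot(\epsilon_{\calS'} \land \epsilon_{\calS''})$. As this holds for every pair of distinct full Sudoku grids, $\calT \models \Uniq$, and since $\calT$ was arbitrary, $\Uniq \in \Dax$. There is no real obstacle here; the only thing one must be careful about is the direction of the equivalence involving $\approx$ (negation of $\not\approx$) together with the \emph{fullness} hypothesis, which is what forces each cell of $\calT$ to contain exactly one digit and hence forces $\calS = \calT$ from $\calT \models \epsilon_\calS$.
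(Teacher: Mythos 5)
Your proof is correct and follows the same route as the paper: verify that $\Uniq$ holds in every full Sudoku grid and invoke the definition of $\Dax$. The paper simply asserts that $\calT \models \Uniq$ for every full grid $\calT$, whereas you supply the (correct) justification via the equivalence $\calT \models \epsilon_\calS \iff \calS = \calT$; this is a welcome elaboration, not a different argument.
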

\begin{proof}
  Note that $\calS \models \Uniq$ holds for every full Sudoku grid $\calS$, and recall that
  $\Dax$ is the set of all Sudoku formulas that hold in every full Sudoku grid.
\end{proof}

Therefore,
  \textsl{if we accept the Sudoku logic as presented in this paper, $\Uniq$ is an axiom of the deductive system.
  Since axioms are always at our disposal in any formal deduction,
  there is \textbf{nothing controversial} in assuming uniqueness while searching for a logically deducible solution
  for a Sudoku grid!}

\begin{PROB}
  For the philosophically inclined reader the last conclusion is just a portal to new levels of controversy:
  \begin{itemize}
    \item Is this logical system too strong?
    \item Is it true that for every deduction in Sudoku logic where $\Uniq$ is used,
          there is another deduction of the same fact which does not use $\Uniq$ (``$\Uniq$-elimination'')?
    \item Is there a logical system for Sudoku which is sound and complete, and yet Uniqueness \emph{cannot} be logically deduced within the system?
  \end{itemize}
\end{PROB}

\appendix

\section*{Appendix}

\section{Completeness of Sudoku logic}
\label{sudoku.sec.app-A}

This entire section is devoted to proving Theorem~\ref{sudoku.thm.completeness} which establishes the completeness of Sudoku logic.
Since Sudoku logic is a straightforward modification of propositional logic, it comes as no surprise that the proof of
completeness of Sudoku logic is a straightforward modification the completeness proof of propositional logic
(see, for example,~\cite{epstein}). Let us now start developing the necessary machinery.

\begin{THM}[Semantic Deduction]\label{sudoku.thm.semded}
  Let $\Phi$ be a set of Sudoku propositions and let $\alpha$ and $\beta$ be Sudoku propositions.
  Then $\Phi \models \alpha \Rightarrow \beta$ if and only if $\Phi \cup \{\alpha\} \models \beta$.
\end{THM}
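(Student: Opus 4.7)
The plan is to prove the equivalence by handling the two implications separately, unpacking the clause ``$\calA \models$ set of propositions implies $\calS \models \ldots$'' from Definition~\ref{sudoku.def.models-set} and using the bridge provided by Lemma~\ref{sudoku.lem.SmodelsCnsA}. Throughout, I read $\Phi \models \alpha \Rightarrow \beta$ in the obvious extended sense: for every Sudoku grid $\calA$ with $\calA \models \Phi$ and every full Sudoku grid $\calS$ that is a solution of $\calA$, $\calS \models \alpha \Rightarrow \beta$ in the sense of Definition~\ref{sudoku.def.models}.

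For the forward direction, I would fix a Sudoku grid $\calA$ satisfying $\Phi \cup \{\alpha\}$ together with a solution $\calS$ of $\calA$. Using $\calA \models \Phi$ and the hypothesis, I immediately get $\calS \models \alpha \Rightarrow \beta$. Since $\calA \models \alpha$ means precisely $\alpha \in \Cns(\calA)$, Lemma~\ref{sudoku.lem.SmodelsCnsA}(b) hands me $\calS \models \alpha$, at which point the semantic clause for $\Rightarrow$ in Definition~\ref{sudoku.def.models} yields $\calS \models \beta$. This direction is essentially mechanical.

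The backward direction is where some care is needed. Assume $\Phi \cup \{\alpha\} \models \beta$, fix a Sudoku grid $\calA$ with $\calA \models \Phi$ and a solution $\calS$ of $\calA$, and aim for $\calS \models \alpha \Rightarrow \beta$. By the semantic clause for $\Rightarrow$ it suffices to assume $\calS \models \alpha$ and derive $\calS \models \beta$, so I need to invoke the hypothesis on some Sudoku grid that witnesses $\Phi \cup \{\alpha\}$ and admits $\calS$ as a solution. The naive move is to refine $\calA$ by deleting the forbidden digit from the cell mentioned in $\alpha$; I expect this to be the main obstacle, because that deletion can collapse a cell to a singleton and thereby break a parallel condition, so the refined grid need not be a Sudoku grid at all.

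My intended fix is to sidestep the refinement entirely and use $\calS$ itself as the test grid. Being a full Sudoku grid, $\calS$ is a Sudoku grid and trivially a solution of itself; Lemma~\ref{sudoku.lem.SmodelsCnsA}(a) applied to $\calA$ and $\calS$ gives $\Phi \subseteq \Cns(\calA) \subseteq \Cns(\calS)$, so $\calS \models \Phi$; together with the standing assumption $\calS \models \alpha$ this yields $\calS \models \Phi \cup \{\alpha\}$. Applying $\Phi \cup \{\alpha\} \models \beta$ to the grid $\calS$ with solution $\calS$ then delivers $\calS \models \beta$, completing the argument.
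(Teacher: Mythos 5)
Your proof is correct and follows essentially the same route as the paper: the key move in the nontrivial direction -- using the full grid $\calS$ itself (which is a solution of itself) as the test grid for $\Phi \cup \{\alpha\}$ rather than trying to refine $\calA$ -- is exactly what the paper does, and your worked-out forward direction is the part the paper dismisses as straightforward.
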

\begin{proof}
  Assume that $\Phi \cup \{\alpha\} \models \beta$. To show that $\Phi \models \alpha \Rightarrow \beta$
  take a Sudoku grid $\calA$ such that $\calA \models \Phi$ and let $\calS$ be a solution of $\calA$.
  By Lemma~\ref{sudoku.lem.SmodelsCnsA} we then have that $\calS \models \Phi$.
  To show that $\calS \models \alpha \Rightarrow \beta$ let us assume that
  $\calS \models \alpha$. Then $\calS \models \Phi \union \{\alpha\}$, so $\Phi \cup \{\alpha\} \models \beta$
  gives us that $\calS \models \beta$. The other direction is straightforward.
\end{proof}

\begin{LEM}\label{sudoku.lem.compl.1}
  If $\alpha$ is an arbitrary Sudoku formula and $\phi$ is a Sudoku axiom, then $\alpha \Rightarrow \phi$ is also
  a Sudoku axiom.
\end{LEM}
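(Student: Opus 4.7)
The plan is to unfold the semantic definition of a Sudoku axiom and then appeal directly to the clause for $\Rightarrow$ in Definition~\ref{sudoku.def.models}. Recall that a Sudoku axiom is, by definition, a Sudoku formula which holds in \emph{every} full Sudoku grid. So to show that $\alpha \Rightarrow \phi$ is a Sudoku axiom, it suffices to pick an arbitrary full Sudoku grid $\calS$ and verify $\calS \models \alpha \Rightarrow \phi$.

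First I would fix such a $\calS$ and recall the semantic clause: $\calS \models \alpha \Rightarrow \phi$ holds if and only if $\calS \models \phi$ whenever $\calS \models \alpha$. Since $\phi$ is a Sudoku axiom, $\calS \models \phi$ holds \emph{unconditionally}, independently of whether or not $\calS \models \alpha$. Thus the implication $\calS \models \alpha \Rightarrow \phi$ is trivially satisfied, and this works for every full Sudoku grid $\calS$.

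There is essentially no obstacle here; the lemma is a direct consequence of the fact that an implication with a semantically guaranteed consequent is vacuously true. The one thing worth stating carefully is that $\alpha$ may be an arbitrary Sudoku formula (not just a Sudoku proposition), which is fine because Definition~\ref{sudoku.def.models} defines $\calS \models \alpha \Rightarrow \phi$ uniformly for all Sudoku formulas. So the entire proof amounts to one line invoking the semantic clause for $\Rightarrow$ together with the fact that $\phi \in \Dax$ means $\calS \models \phi$ for every full Sudoku grid $\calS$.
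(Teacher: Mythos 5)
Your proof is correct and is essentially identical to the paper's: both fix an arbitrary full Sudoku grid $\calS$, note that $\calS \models \phi$ holds because $\phi$ is a Sudoku axiom, and conclude via the semantic clause for $\Rightarrow$ that $\calS \models \alpha \Rightarrow \phi$ regardless of the status of $\alpha$. No differences worth noting.
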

\begin{proof}
  Let $\calS$ be an arbitrary full Sudoku grid. Then $\calS \models \phi$ because $\phi$ is a Sudoku axiom. But then
  $\calS \models \alpha \Rightarrow \phi$: since the consequent $\phi$ is true in $\calS$, the implication
  $\alpha \Rightarrow \phi$ will be true in $\calS$ regardless of the status of the antecedent $\alpha$.
\end{proof}

\begin{THM}[Syntactic Deduction]\label{sudoku.thm.syntded}
  Let $\Phi$ be a set of Sudoku formulas and let $\alpha$ and $\beta$ be some Sudoku formulas. Then
  $\Phi \vdash \alpha \Rightarrow \beta$ if and only if $\Phi \union \{\alpha\} \vdash \beta$.
\end{THM}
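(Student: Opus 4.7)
The plan is to prove the two directions separately. The left-to-right direction is a one-line extension of proofs, while the right-to-left direction is the Deduction Theorem from Hilbert-style propositional calculus and requires induction on proof length.

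For the easy direction, assume $\Phi \vdash \alpha \Rightarrow \beta$ and take a witnessing proof $\phi_1, \phi_2, \ldots, \phi_n$ with $\phi_n = \alpha \Rightarrow \beta$. I append two lines: $\phi_{n+1} = \alpha$ (justified as a premise since $\alpha \in \Phi \cup \{\alpha\}$) and $\phi_{n+2} = \beta$ (justified by modus ponens from $\phi_{n+1}$ and $\phi_n$). This yields a proof of $\beta$ from $\Phi \cup \{\alpha\}$, so $\Phi \cup \{\alpha\} \vdash \beta$.

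For the nontrivial direction, assume $\Phi \cup \{\alpha\} \vdash \beta$ with witnessing proof $\phi_1, \phi_2, \ldots, \phi_n = \beta$. I will prove by strong induction on $i$ that $\Phi \vdash \alpha \Rightarrow \phi_i$ for every $1 \le i \le n$; the case $i = n$ gives the conclusion. Four cases need to be handled according to how $\phi_i$ enters the proof. If $\phi_i \in \Dax$, then Lemma~\ref{sudoku.lem.compl.1} gives $\alpha \Rightarrow \phi_i \in \Dax$, so the one-line proof consisting of just $\alpha \Rightarrow \phi_i$ witnesses $\Phi \vdash \alpha \Rightarrow \phi_i$. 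If $\phi_i \in \Phi$, I use the tautology-derived Sudoku axiom $\phi_i \Rightarrow (\alpha \Rightarrow \phi_i)$ (from the tautology $p \Rightarrow (q \Rightarrow p)$) together with the premise $\phi_i$ and apply modus ponens. If $\phi_i = \alpha$, the Sudoku axiom $\alpha \Rightarrow \alpha$ (derived from the tautology $p \Rightarrow p$) does the job immediately.

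The substantive case is modus ponens: suppose $\phi_i$ arises from $\phi_p$ and $\phi_q = \phi_p \Rightarrow \phi_i$ with $p, q < i$. By the induction hypothesis, $\Phi \vdash \alpha \Rightarrow \phi_p$ and $\Phi \vdash \alpha \Rightarrow (\phi_p \Rightarrow \phi_i)$. I invoke the Sudoku axiom derived from the propositional tautology $(p \Rightarrow (q \Rightarrow r)) \Rightarrow ((p \Rightarrow q) \Rightarrow (p \Rightarrow r))$, namely
$$
  (\alpha \Rightarrow (\phi_p \Rightarrow \phi_i)) \Rightarrow ((\alpha \Rightarrow \phi_p) \Rightarrow (\alpha \Rightarrow \phi_i)),
$$
and apply modus ponens twice to assemble a proof of $\alpha \Rightarrow \phi_i$ from $\Phi$. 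Concretely, I concatenate the two proofs guaranteed by the induction hypothesis, append the axiom above, and then deduce $(\alpha \Rightarrow \phi_p) \Rightarrow (\alpha \Rightarrow \phi_i)$ and finally $\alpha \Rightarrow \phi_i$.

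The only obstacle worth flagging is bookkeeping: the argument rests on having the right tautology-derived axioms available ($p \Rightarrow p$, $p \Rightarrow (q \Rightarrow p)$, and the distribution axiom above), but these are all classical tautologies so the Sudoku formulas obtained by substitution are automatically in $\Dax$. Everything else is Hilbert-style assembly of proofs, and the induction itself is entirely standard.
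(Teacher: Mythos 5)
Your proposal is correct and follows essentially the same route as the paper: the forward direction by appending $\alpha$ and a modus ponens step, and the converse by induction on proof length, handling the axiom case via Lemma~\ref{sudoku.lem.compl.1}, the premise and $\phi_i = \alpha$ cases via the tautologies $p \Rightarrow (q \Rightarrow p)$ and $p \Rightarrow p$, and the modus ponens case via the distribution tautology. No gaps.
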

\begin{proof}
  $(\Rightarrow)$ Assume that $\Phi \vdash \alpha \Rightarrow \beta$. Then there is a proof
  $$
    \phi_1, \; \ldots, \; \phi_{n-1}, \; \alpha \Rightarrow \beta
  $$
  where the premisses we need for the proof are taken from $\Phi$. But then
  $$
    \phi_1, \; \ldots, \; \phi_{n-1}, \; \alpha \Rightarrow \beta, \; \alpha, \; \beta
  $$
  is a proof of $\beta$ from the premisses $\Phi \union \{\alpha\}$. Note that in the last step we have applied modus ponens
  to the previous two formulas.

  $(\Leftarrow)$ Assume that $\Phi \union \{\alpha\} \vdash \beta$ and let
  $$
    \phi_1, \; \ldots, \; \phi_{n}
  $$
  be a proof of $\phi_n = \beta$ from the premisses in $\Phi \union \{\alpha\}$. Let us show then that
  $$
    \alpha \Rightarrow \phi_1, \; \ldots, \; \alpha \Rightarrow \phi_{n}
  $$
  is a proof from the premisses in $\Phi$. Since $\phi_n = \beta$ this will show that $\Phi \vdash \alpha \Rightarrow \beta$.
  We proceed by induction.

  For $i = 1$ we have that $\phi_1$ is an axiom, belongs to $\Phi$ or equals $\alpha$. If $\phi_1 \in \Dax$ then
  then $\alpha \Rightarrow \phi_1 \in \Dax$ by Lemma~\ref{sudoku.lem.compl.1}, so $\Phi \vdash \alpha \Rightarrow \phi_1$.
  If $\phi_1 \in \Phi$ then the following is a proof from the premisses in $\Phi$:
  $$
    \phi_1, \; \phi_1 \Rightarrow (\alpha \Rightarrow \phi_1), \; \alpha \Rightarrow \phi_1.
  $$
  Namely, the first formula belongs to $\Phi$, the second one is a tautology and hence an axiom, and the third is obtained by
  modus ponens. Finally, if $\phi_1 = \alpha$ then $\Phi \vdash \alpha \Rightarrow \alpha$
  because $\alpha \Rightarrow \alpha$ is a tautology, and hence a Sudoku axiom.

  Assume, now, that $\Phi \vdash \alpha \Rightarrow \phi_q$ for all $q < i$, and let us take a look at $\phi_i$.
  If $\phi_i$ is an axiom, belongs to $\Phi$ or equals $\alpha$ then $\Phi \vdash \phi_i \Rightarrow \alpha$ as in
  the base case $i = 1$. If this is not the case, then $\phi_i$ is obtained by modus ponens from some
  $\phi_p$ and $\phi_q = \phi_p \Rightarrow \phi_i$ where $p, q < i$. By the induction hypothesis we have that
  $\Phi \vdash \alpha \Rightarrow \phi_p$ and $\Phi \vdash \alpha \Rightarrow \phi_q$. But then we have that
  $$
  \begin{array}{rr@{\,}c@{\,}ll}
    1. & \Phi & \vdash & \alpha \Rightarrow \phi_p & [\text{induction hypothesis}]\\
    2. & \Phi & \vdash & \alpha \Rightarrow (\phi_p \Rightarrow \phi_i) & [\text{ind.\ hyp.\ and } \phi_q = \phi_p \Rightarrow \phi_i]\\
    3. &      & \vdash & \multicolumn{2}{@{}l@{}}{(\alpha \Rightarrow (\phi_p \Rightarrow \phi_i)) \Rightarrow ((\alpha \Rightarrow \phi_p) \Rightarrow (\alpha \Rightarrow \phi_i))}\\
    4. & \Phi & \vdash & (\alpha \Rightarrow \phi_p) \Rightarrow (\alpha \Rightarrow \phi_i) & [\text{modus ponens 2, 3}]\\
    5. & \Phi & \vdash & \alpha \Rightarrow \phi_i & [\text{modus ponens 1, 4}]
  \end{array}
  $$
  Note that the formula in step 3.\ is a tautology, and hence a Sudoku axiom.
  This completes the proof of the Syntactic Deduction Theorem.
\end{proof}

\begin{DEF}
  A set $\Phi$ of Sudoku formulas is \emph{complete} if for every Sudoku formula $\alpha$ we have that $\Phi \vdash \alpha$
  or $\Phi \vdash \lnot \alpha$.

  A set $\Phi$ of Sudoku formulas is \emph{inconsistent} if there is a Sudoku formula $\alpha$
  such that $\Phi \vdash \alpha$ and $\Phi \vdash \lnot\alpha$; and $\Phi$ is \emph{consistent}
  if it is not inconsistent.

  A set $\Phi$ of Sudoku formulas is a \emph{Sudoku theory} if it is \emph{deductively closed}, that is, if
  $\Phi \vdash \alpha$ then $\alpha \in \Phi$ for every Sudoku formula $\alpha$.

  The \emph{theory of a full Sudoku grid $\calS$} is the set $\Th(\calS)$ of all Sudoku formulas $\phi$
  such that $\calS \models \phi$.
\end{DEF}

\begin{LEM}\label{sudoku.lem.ThS}
  For every full Sudoku grid $\calS$ we have that $\Th(\calS)$ is a complete and consistent Sudoku theory.
\end{LEM}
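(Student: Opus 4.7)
The plan is to verify the three properties---being a theory, being consistent, being complete---in turn, using only soundness (Lemma~\ref{sudoku.lem.s-phi-alpha}) together with the bivalent clauses of Definition~\ref{sudoku.def.models}. The key observation, which powers all three arguments, is that $\calS \models \Th(\calS)$ holds trivially, because every formula in $\Th(\calS)$ is by definition true in $\calS$. This lets us apply Lemma~\ref{sudoku.lem.s-phi-alpha} with $\Phi = \Th(\calS)$ to conclude that anything provable from $\Th(\calS)$ is already true in $\calS$, and hence already belongs to $\Th(\calS)$.

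First I would show that $\Th(\calS)$ is deductively closed. Suppose $\Th(\calS) \vdash \alpha$. Since $\calS \models \Th(\calS)$, Lemma~\ref{sudoku.lem.s-phi-alpha} yields $\calS \models \alpha$, so $\alpha \in \Th(\calS)$ by the definition of $\Th(\calS)$. Thus $\Th(\calS)$ is a Sudoku theory.

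Next I would establish consistency. Suppose, towards a contradiction, that there is some Sudoku formula $\alpha$ with $\Th(\calS) \vdash \alpha$ and $\Th(\calS) \vdash \lnot\alpha$. By the deductive closure just proved, both $\alpha$ and $\lnot\alpha$ belong to $\Th(\calS)$, so $\calS \models \alpha$ and $\calS \models \lnot\alpha$. But by Definition~\ref{sudoku.def.models}, $\calS \models \lnot\alpha$ means that it is \emph{not} the case that $\calS \models \alpha$, a contradiction.

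Finally I would prove completeness. Let $\alpha$ be an arbitrary Sudoku formula. Either $\calS \models \alpha$, in which case $\alpha \in \Th(\calS)$ and hence $\Th(\calS) \vdash \alpha$ (since any premise is provable from itself in one step), or it is not the case that $\calS \models \alpha$, in which case $\calS \models \lnot\alpha$ by Definition~\ref{sudoku.def.models}, so $\lnot\alpha \in \Th(\calS)$ and $\Th(\calS) \vdash \lnot\alpha$. In either case one of $\alpha$, $\lnot\alpha$ is provable from $\Th(\calS)$, so $\Th(\calS)$ is complete. There is no real obstacle here; the entire lemma is a bookkeeping exercise that leverages soundness plus the bivalence built into the semantic clause for negation.
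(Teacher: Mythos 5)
Your proof is correct and follows exactly the paper's argument: deductive closure via Lemma~\ref{sudoku.lem.s-phi-alpha} applied to $\calS \models \Th(\calS)$, and completeness and consistency from the bivalence of the satisfaction relation in the model $\calS$. You have simply written out in full the details that the paper's proof leaves as "easy to see."
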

\begin{proof}
  Let $\calS$ be a full Sudoku grid. Lemma~\ref{sudoku.lem.s-phi-alpha} ensures that $\Th(\calS)$ is
  deductively closed and hence a theory. It is also easy to see that $\Th(\calS)$ is a complete and consistent set of Sudoku formulas
  because we have a model in which we can check validity. Namely, for every Sudoku formula $\alpha$ we have that
  either $\calS \models \alpha$ or not $\calS \models \alpha$.
\end{proof}

\begin{LEM}\label{sudoku.lem.aux-compl}
  Let $\Phi$ be a set of Sudoku formulas and $\alpha$ a Sudoku formula.

  $(a)$ $\Phi$ is consistent if and only if every finite subset of $\Phi$ is consistent.

  $(b)$ $\Phi$ is inconsistent if and only if $\Phi \vdash \alpha$ for every Sudoku formula $\alpha$.

  $(c)$ $\Phi \nvdash \alpha$ if and only if $\Phi \union \{\lnot\alpha\}$ is consistent.
\end{LEM}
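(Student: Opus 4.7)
The plan is to prove the three parts in order, using nothing more exotic than the fact that proofs are finite sequences, the availability of propositional tautologies as Sudoku axioms, and the Syntactic Deduction Theorem (Theorem~\ref{sudoku.thm.syntded}). None of the three is hard; the subtlest is $(c)$, where the forward direction requires a short reductio combined with both $(b)$ and the Deduction Theorem.

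For $(a)$, the nontrivial direction is: if every finite subset of $\Phi$ is consistent then so is $\Phi$. I would contrapose: suppose $\Phi$ is inconsistent, so there is some Sudoku formula $\alpha$ with $\Phi \vdash \alpha$ and $\Phi \vdash \lnot\alpha$. Since each proof~\eqref{sudoku.eq.proof} is by definition a \emph{finite} sequence, each uses only finitely many premisses; taking the union of those two finite sets of premisses yields a finite $\Phi_0 \subseteq \Phi$ for which both deductions still go through. Hence $\Phi_0$ is inconsistent. The reverse direction is immediate, as any proof from a subset is also a proof from $\Phi$.

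For $(b)$, the interesting direction is $(\Rightarrow)$: from some witness $\beta$ of inconsistency I want to deduce an arbitrary $\alpha$. The key observation is that the propositional tautology $\beta \Rightarrow (\lnot\beta \Rightarrow \alpha)$ (\textsl{ex falso quodlibet}) yields a Sudoku axiom by substitution, and two applications of modus ponens to $\beta$, $\lnot\beta$ and this axiom give $\Phi \vdash \alpha$. The converse is trivial: apply the hypothesis to any $\alpha$ and to $\lnot\alpha$.

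For $(c)$, in the direction $(\Rightarrow)$ I proceed by contradiction: assume $\Phi \nvdash \alpha$ but $\Phi \cup \{\lnot\alpha\}$ is inconsistent. By part $(b)$, $\Phi \cup \{\lnot\alpha\} \vdash \alpha$. The Syntactic Deduction Theorem gives $\Phi \vdash \lnot\alpha \Rightarrow \alpha$. Combined with the tautology $(\lnot\alpha \Rightarrow \alpha) \Rightarrow \alpha$ (which is a Sudoku axiom) and modus ponens, this yields $\Phi \vdash \alpha$, contradicting the hypothesis. For $(\Leftarrow)$, suppose $\Phi \cup \{\lnot\alpha\}$ is consistent but $\Phi \vdash \alpha$. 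Then trivially $\Phi \cup \{\lnot\alpha\} \vdash \alpha$ and $\Phi \cup \{\lnot\alpha\} \vdash \lnot\alpha$, so $\Phi \cup \{\lnot\alpha\}$ is inconsistent — a contradiction. The main obstacle, such as it is, is simply remembering to invoke the Syntactic Deduction Theorem in the right direction in $(c)$; once that is in place, the argument is mechanical.
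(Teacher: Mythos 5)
Your proof is correct and follows essentially the same route as the paper's: finiteness of proofs for $(a)$, an \emph{ex falso} tautology plus two applications of modus ponens for $(b)$, and part $(b)$ together with the Syntactic Deduction Theorem for the forward direction of $(c)$. The only (cosmetic) difference is in $(c)$, where you invoke the tautology $(\lnot\alpha \Rightarrow \alpha) \Rightarrow \alpha$ directly, while the paper routes through $(\lnot\alpha \Rightarrow \alpha) \Rightarrow ((\alpha \Rightarrow \alpha) \Rightarrow \alpha)$ and one extra modus ponens.
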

\begin{proof}
  $(a)$ If there is a finite subset of $\Phi$ which is inconsistent, then $\Phi$ is also inconsistent. Conversely,
  if $\Phi$ is inconsistent then, by definition, there is a Sudoku formula $\alpha$
  such that $\Phi \vdash \alpha$ and $\Phi \vdash \lnot\alpha$. This means that there are proofs
  $$
    \phi_1, \; \phi_2, \; \ldots, \phi_n \text{\quad and\quad} \psi_1, \; \psi_2, \; \ldots, \psi_m
  $$
  of $\phi_n = \alpha$ and $\psi_m = \lnot \alpha$ from the premisses in $\Phi$. Then it is easy to see that
  $$
    \Psi = \Phi \cap \{\phi_1, \phi_2, \ldots, \phi_n, \psi_1, \psi_2, \ldots, \psi_m\}
  $$
  is a finite inconsistent subset of $\Phi$.

  $(b)$
  If $\Phi \vdash \alpha$ for every Sudoku formula $\alpha$, then $\Phi \vdash \beta$ and 
  $\Phi \vdash \lnot\beta$ for some fixed $\beta$, so $\Phi$ is inconsistent by definition.
  Conversely, if $\Phi$ is inconsistent then by definition there is a formula $\beta$ such that
  $\Phi \vdash \beta$ and $\Phi \vdash \lnot\beta$. Let $\alpha$ be an arbitrary Sudoku formula. Then
  $$
  \begin{array}{rr@{\,}c@{\,}ll}
    1. & \Phi & \vdash & \beta & [\text{by assumption}]\\
    2. & \Phi & \vdash & \lnot\beta & [\text{by assumption}]\\
    3. &      & \vdash & \lnot\beta \Rightarrow (\beta \Rightarrow \alpha) & [\text{tautology}]\\
    4. & \Phi & \vdash & \beta \Rightarrow \alpha & [\text{modus ponens 2, 3}]\\
    5. & \Phi & \vdash & \alpha & [\text{modus ponens 1, 4}]
  \end{array}
  $$

  $(c)$ If $\Phi \vdash \alpha$ then $\Phi \union \{\lnot\alpha\}$ is inconsistent because we then have
  $\Phi \union \{\lnot\alpha\} \vdash \alpha$ and $\Phi \union \{\lnot\alpha\} \vdash \lnot\alpha$.
  Conversely, if $\Phi \union \{\lnot\alpha\}$ is inconsistent then by $(b)$ we have that 
  $\Phi \union \{\lnot\alpha\} \vdash \alpha$. Syntactic Deduction Theorem then gives us that
  $\Phi \vdash \lnot\alpha \Rightarrow \alpha$. Then:
  $$
  \begin{array}{rr@{\,}c@{\,}ll}
    1. & \Phi & \vdash & \lnot\alpha \Rightarrow \alpha & [\text{Syntactic Deduction Theorem}]\\
    2. &      & \vdash & \alpha \Rightarrow \alpha\\
    3. &      & \vdash & \multicolumn{2}{@{}l@{}}{(\lnot\alpha \Rightarrow \alpha) \Rightarrow ((\alpha \Rightarrow \alpha) \Rightarrow \alpha)}\\
    4. & \Phi & \vdash & (\alpha \Rightarrow \alpha) \Rightarrow \alpha & [\text{modus ponens 1, 3}]\\
    5. & \Phi & \vdash & \alpha & [\text{modus ponens 2, 4}]
  \end{array}
  $$
  Note that the formulas in steps 2 and 3 are tautologies, and hence Sudoku axioms. This completes the proof.
\end{proof}

\begin{LEM}\label{sudoku.lem.8}
  If $\alpha$ is a Sudoku formula such that $\nvdash \alpha$ then there is a complete and consistent Sudoku theory $\Phi$
  such that $\alpha \notin \Phi$.
\end{LEM}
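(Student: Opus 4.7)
The statement is the classical Lindenbaum extension lemma adapted to Sudoku logic, and the natural plan is to carry out the standard Lindenbaum construction using the tools already assembled in Lemma~\ref{sudoku.lem.aux-compl}.

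First, I would observe that since the alphabet of Sudoku logic is finite (finitely many symbols $r_i c_j$, $\not\approx$, digits, connectives, parentheses, $\top$, $\bot$) and every Sudoku formula is a finite string by definition, the set of all Sudoku formulas is countable. Fix any enumeration $\psi_1, \psi_2, \psi_3, \ldots$ of all Sudoku formulas. Because $\nvdash \alpha$, Lemma~\ref{sudoku.lem.aux-compl}~$(c)$ applied with $\Phi = \0$ gives that $\{\lnot\alpha\}$ is consistent; this will be the starting point of our chain.

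Next, I would build a chain $\Phi_0 \subseteq \Phi_1 \subseteq \Phi_2 \subseteq \ldots$ by setting $\Phi_0 = \{\lnot\alpha\}$ and, given $\Phi_n$, declaring
$$
  \Phi_{n+1} = \begin{cases} \Phi_n \union \{\psi_{n+1}\}, & \text{if this set is consistent,}\\ \Phi_n \union \{\lnot\psi_{n+1}\}, & \text{otherwise.}\end{cases}
$$
An easy induction keeps each $\Phi_n$ consistent: if $\Phi_n \union \{\psi_{n+1}\}$ is inconsistent then (by the argument used to prove Lemma~\ref{sudoku.lem.aux-compl}~$(c)$, together with Syntactic Deduction) $\Phi_n \vdash \lnot\psi_{n+1}$, so $\Phi_n \union \{\lnot\psi_{n+1}\}$ must be consistent -- for otherwise $\Phi_n$ would itself prove both $\psi_{n+1}$ and $\lnot\psi_{n+1}$, contradicting the induction hypothesis. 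Let $\Phi^{\ast} = \bigcup_{n \ge 0} \Phi_n$. By Lemma~\ref{sudoku.lem.aux-compl}~$(a)$ the set $\Phi^{\ast}$ is consistent, because any inconsistent finite subset would already live inside some $\Phi_n$. By construction, for every Sudoku formula $\psi_k$ either $\psi_k \in \Phi^{\ast}$ or $\lnot\psi_k \in \Phi^{\ast}$, so $\Phi^{\ast}$ is complete.

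Finally, let $\Phi = \{\phi : \Phi^{\ast} \vdash \phi\}$ be the deductive closure of $\Phi^{\ast}$. Clearly $\Phi$ is deductively closed and hence a Sudoku theory; completeness of $\Phi$ is inherited from $\Phi^{\ast}$, and consistency of $\Phi$ follows immediately from consistency of $\Phi^{\ast}$ together with the observation that $\Phi \vdash \phi$ iff $\Phi^{\ast} \vdash \phi$. Moreover $\lnot\alpha \in \Phi$ because $\lnot\alpha \in \Phi_0 \subseteq \Phi^{\ast}$, and therefore $\alpha \notin \Phi$, since $\alpha \in \Phi$ together with $\lnot\alpha \in \Phi$ would make $\Phi$ inconsistent.

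The only nontrivial step, and the one I would most carefully double-check, is the inductive verification that the construction really does produce a consistent $\Phi_{n+1}$ at each stage -- everything else is bookkeeping built on top of Lemma~\ref{sudoku.lem.aux-compl} and the Syntactic Deduction Theorem.
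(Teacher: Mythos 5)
Your proof is correct and follows essentially the same Lindenbaum construction as the paper: enumerate all formulas, start from the consistent set $\{\lnot\alpha\}$, extend stage by stage, and take the union. The only cosmetic differences are that at each stage you add $\lnot\psi_{n+1}$ when $\psi_{n+1}$ cannot be consistently added (which makes completeness immediate, whereas the paper simply skips the formula and recovers completeness afterwards via Lemma~\ref{sudoku.lem.aux-compl}~$(c)$), and that you take an explicit deductive closure at the end, whereas the paper shows the union itself is already deductively closed by the same consistency argument.
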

\begin{proof}
  The set of all Sudoku formulas is countably infinite, so we can arrange all of them in an infinite sequence:
  $$
    \beta_0, \; \beta_1, \; \beta_2, \; \ldots
  $$
  Let us now construct a sequence of sets $\Phi_0, \Phi_1, \Phi_2, \ldots$ as follows. Put
  $$
    \Phi_0 = \{ \lnot\alpha \}
  $$
  and, assuming that $\Phi_n$ has been constructed, define $\Phi_{n+1}$ like this:
  $$
    \Phi_{n+1} = \begin{cases}
      \Phi_n \union \{\beta_n\}, & \text{if } \Phi_n \union \{\beta_n\} \text{ is consistent},\\
      \Phi_n, & \text{otherwise}.
    \end{cases}
  $$
  Finally, put $\Phi = \bigcup_{i \ge 0} \Phi_i$. Let us show that $\Phi$ is a complete and consistent Sudoku theory such that
  $\alpha \notin \Phi$.

  The consistency of $\Phi_0$ follows from Lemma~\ref{sudoku.lem.aux-compl}~$(c)$ (just put $\Phi = \0$ there).
  For all other $i$ the consistency of $\Phi_i$ follows by construction. So, each $\Phi_i$ is consistent.
  Now, if $\Phi$ is inconsistent then by Lemma~\ref{sudoku.lem.aux-compl}~$(a)$ there is a finite subset $\Psi$
  of $\Phi$ which is inconsistent. Since $\Psi$ is finite, there is some $n$ such that $\Phi \subseteq \Phi_n$.
  So, $\Phi_n$ is inconsistent by another application of Lemma~\ref{sudoku.lem.aux-compl}~$(a)$. Contradiction.
  This proves consistency of $\Phi$.

  To show that $\Phi$ is complete, take any Sudoku formula $\alpha$. If $\Phi \vdash \alpha$ we are done.
  If, however, $\Phi \nvdash \alpha$ then by Lemma~\ref{sudoku.lem.aux-compl}~$(c)$ we have that 
  $\Phi \union \{\lnot\alpha\}$ is consistent. Let $\lnot\alpha$ be the formula $\beta_j$ in our enumeration from the very
  beginning of the proof. Then it must be the case that $\Phi_j \union \{\beta_j\}$ is also consistent,
  so $\lnot\alpha = \beta_j \in \Phi_{j+1} \subseteq \Phi$ and thus $\Phi \vdash \lnot\alpha$.

  The same idea applies to show that $\Phi$ is a theory. Namely, we know that $\Phi$ is consistent, so
  if $\Phi \vdash \alpha$ then $\Phi \union \{\alpha\}$ is also consistent. Then
  the same argument as above ensures that $\alpha \in \Phi$. This completes the proof.
\end{proof}

\begin{LEM}\label{sudoku.lem.7}
  A set $\Phi$ of Sudoku formulas  is a complete and consistent Sudoku theory if and only if
  $\Phi = \Th(\calS)$ for some full Sudoku grid $\calS$.
\end{LEM}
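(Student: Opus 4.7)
The forward implication ($\Leftarrow$) is immediate from Lemma~\ref{sudoku.lem.ThS}. The interesting direction is ($\Rightarrow$): given a complete and consistent Sudoku theory $\Phi$, I construct a full Sudoku grid $\calS$ with $\Phi = \Th(\calS)$. Observe first that $\Dax \subseteq \Phi$: every Sudoku axiom~$\alpha$ satisfies $\vdash \alpha$, and a theory is deductively closed. I then define $\calS = [S_{ij}]_{9 \times 9}$ by
$$
  S_{ij} = \{ d \in \{1, \ldots, 9\} : (r_i c_j \not\approx d) \notin \Phi \}.
$$

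The next step is to show $|S_{ij}| = 1$ for every $(i,j)$. Here I exploit two observations about $\Dax$. The formula $\lnot\bigwedge_{d=1}^{9}(r_i c_j \not\approx d)$ holds in every full Sudoku grid (every cell contains some digit), hence lies in $\Dax \subseteq \Phi$; if $S_{ij}=\0$, then all nine propositions $r_i c_j \not\approx d$ would belong to $\Phi$, and so would their conjunction, contradicting consistency. Similarly, for any $d_1 \ne d_2$ the disjunction $(r_i c_j \not\approx d_1) \lor (r_i c_j \not\approx d_2)$ holds in every full Sudoku grid and is therefore in $\Phi$; if both $d_1, d_2 \in S_{ij}$, then by completeness $\lnot(r_i c_j \not\approx d_k) \in \Phi$ for $k=1,2$, which together with the disjunction yields an inconsistency.

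Now I verify that $\calS \models \Sax$. Suppose two cells in the same row both carry digit~$d$, say $S_{ij} = S_{im} = \{d\}$ with $j \ne m$. By construction all of the conjuncts of $r_i c_j \equiv d$ lie in $\Phi$, so $r_i c_j \equiv d \in \Phi$. Since $R_i \in \Sax \subseteq \Phi$, the conjunct $r_i c_j \parallel r_i c_m$ is in $\Phi$, and therefore so is the implication $r_i c_j \equiv d \Rightarrow r_i c_m \not\approx d$. Modus ponens gives $r_i c_m \not\approx d \in \Phi$, contradicting $d \in S_{im}$. The column and box conditions are handled analogously.

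It remains to prove that $\alpha \in \Phi$ if and only if $\calS \models \alpha$ for every Sudoku formula~$\alpha$, which will give $\Phi = \Th(\calS)$. I argue by induction on the structure of~$\alpha$. The base cases $r_i c_j \not\approx d$ (by the construction of $S_{ij}$), $\top$ (an axiom, in $\Phi$; satisfied by $\calS$), and $\bot$ (not in $\Phi$ by consistency; not satisfied by $\calS$) are direct. For the inductive step one uses the standard tools: $\lnot\alpha \in \Phi \iff \alpha \notin \Phi$ by completeness together with consistency; $\alpha \land \beta \in \Phi \iff \alpha, \beta \in \Phi$ since $\Phi$ is a theory and the relevant derivations are tautological; the case of $\lor$ is handled by noting that if $\alpha \lor \beta \in \Phi$ but $\alpha, \beta \notin \Phi$ then completeness gives $\lnot\alpha \land \lnot\beta \in \Phi$, contradicting consistency; and $\Rightarrow, \Leftrightarrow$ reduce to the previous cases. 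The main obstacle is isolating the two ``hidden'' Sudoku axioms that ensure each cell contributes exactly one digit to $\calS$; once these axioms are identified, the verification that $\calS$ is a Sudoku grid and the structural induction establishing $\Phi = \Th(\calS)$ proceed routinely.
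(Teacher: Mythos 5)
Your proposal is correct and follows essentially the same route as the paper's proof: build the grid cell by cell using completeness, consistency, and the Sudoku axioms forcing exactly one surviving digit per cell, check the row/column/box conditions by deriving a contradiction from a repeated digit, and finish with a structural induction showing $\Phi = \Th(\calS)$. The only differences are cosmetic (you define $S_{ij}$ as the set of non-excluded digits and prove it is a singleton, where the paper proves existence and uniqueness of $d$ with $\Phi \vdash r_i c_j \approx d$; and you treat $\lor$ directly instead of reducing it to $\lnot$ and $\land$ via tautologies).
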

\begin{proof}
  If $\Phi = \Th(\calS)$ for some full Sudoku grid $\calS$ then $\Phi$ is clearly a complete and consistent Sudoku theory
  since we have a models $\calS$ in which we can check validity of formulas.

  Conversely, assume that $\Phi$ is a complete and consistent Sudoku theory. Let us construct a full Sudoku grid $\calS$
  such that $\Phi = \Th(\calS)$. We proceed in several steps. Recall that $r_ic_j \approx d$ is a shorthand for
  $$
      r_ic_j \approx d: \qquad \lnot(r_i c_j \not\approx d).
  $$

  \medskip

  Step 1. Let us show that for each $i, j \in \{1, 2, \ldots, 9\}$ there is a unique $d \in \{1, 2, \ldots, 9\}$ such that $\Phi \vdash r_i c_j \approx d$.

  Proof. Fix an $i$ and $j$ and let us first show that such a $d$ exists. Suppose, to the contrary, that
  $\Phi \nvdash r_i c_j \approx 1$, $\Phi \nvdash r_i c_j \approx 2$, \ldots, $\Phi \nvdash r_i c_j \approx 9$.
  Then the completeness of $\Phi$ implies that
  $\Phi \vdash r_i c_j \not\approx 1$, $\Phi \vdash r_i c_j \not\approx 2$, \ldots, $\Phi \vdash r_i c_j \not\approx 9$, so
  $$
    \Phi \vdash r_i c_j \not\approx 1 \land r_i c_j \not\approx 2 \land \ldots \land r_i c_j \not\approx 9.
  $$
  On the other hand,
  $$
    \Phi \vdash r_i c_j \approx 1 \lor r_i c_j \approx 2 \lor \ldots \lor r_i c_j \approx 9,
  $$
  because this is a Sudoku axiom. Therefore, we have found a formula $\alpha$ such that $\Phi \vdash \alpha$ and
  $\Phi \vdash \lnot\alpha$, which contradicts the consistency of $\Phi$. This proves the existence part of the claim.

  The uniqueness part follows by similar reasoning. Suppose, to the contrary, that $\Phi \vdash r_i c_j \approx d$
  and $\Phi \vdash r_i c_j \approx e$ where $e \ne d$. Then
  $$
    \Phi \vdash r_i c_j \approx d \land r_i c_j \approx e.
  $$
  On the other hand, the fact that $e \ne d$ implies that
  $$
    \Phi \vdash \lnot(r_i c_j \approx d \land r_i c_j \approx e)
  $$
  is a Sudoku axiom, and we again get the contradiction with the consistency of $\Phi$. This completes Step~1.

  \medskip

  Step 2. Let $\calS = [\{d_{ij}\}]_{9 \times 9}$ be a grid where, for each $i, j \in \{1, 2, \ldots, 9\}$,
  we take $d_{ij}$ to be the unique digit such that $\Phi \vdash r_i c_j \approx d_{ij}$ whose existence is established in Step~1.
  Let us show that $\calS$ is a full Sudoku grid.

  Proof. This grid is clearly full (every cell has only one assigned value). Assume that $\calS$ is not a Sudoku grid.
  Then there exists a row, a column or a box in which some value is repeated. Without loss of generality, assume
  that there exist cells $r_i c_j$ and $r_i c_m$ such that $j \ne m$ but $d_{ij} = d_{im}$.
  Let $d$ be this common value. Then, by the construction,
  $$
    \Phi \vdash r_i c_j \approx d \land r_i c_m \approx d.
  $$
  On the other hand, the fact that $j \ne m$ implies that
  $$
    \Phi \vdash \lnot(r_i c_j \approx d \land r_i c_m \approx d)
  $$
  is a Sudoku axiom, and we get the contradiction with the consistency of $\Phi$. This completes Step~2.

  \medskip

  Step 3. $\Phi = \Th(\calS)$.

  Proof. Take any Sudoku formula $\alpha$ and let us show using induction on the number of logical connectives in $\alpha$ that 
  \begin{center}
    $\alpha \in \Th(\calS)$ if and only if $\alpha \in \Phi$.
  \end{center}
  Assume, first, that $\alpha$ has no logical connectives. Then $\alpha$ is a Sudoku proposition, that is,
  a statement of the form $r_i c_j \not\approx d$ for some $1 \le i, j, d \le 9$. If $\alpha \in \Th(\calS)$
  then $\calS \models r_i c_j \not\approx d$. Suppose that that $\alpha \notin \Phi$.
  Then $\Phi \nvdash \alpha$ because $\Phi$ is deductively closed, so $\Phi \vdash \lnot\alpha$ because
  $\Phi$ is complete. In other words, $\Phi \vdash r_i c_j \approx d$. By construction of $\calS$ (Step~1) we then have that
  $\calS \models r_i c_j \approx d$. Contradiction.

  On the other hand, if $\alpha \notin \Th(\calS)$ then it is not the case that $\calS \models \alpha$, so
  $\calS \models r_i c_j \approx d$. By construction of $\calS$ (Step~1) we then have that
  $\Phi \vdash r_i c_j \approx d$, whence $\Phi \nvdash r_i c_j \not\approx d$ because $\Phi$ is consistent.
  Therefore, $\alpha \notin \Phi$.

  Assume now that the statement
  \begin{equation}\label{sudoku.eq.ThS}
    \beta \in \Th(\calS) \text{\quad if and only if\quad} \beta \in \Phi
  \end{equation}
  holds for every formula $\beta$ with less logical connectives than $\alpha$.

  If $\alpha = \lnot \beta$ then $\calS \models \alpha$ means that
  $\calS \models \lnot\beta$, so it is not the case that $\calS \models \beta$, or, equivalently, $\beta \notin \Th(\calS)$.
  Since $\beta$ has less logical connectives than $\alpha$, the induction hypothesis \eqref{sudoku.eq.ThS} then yields that
  $\beta \notin \Phi$, so $\alpha = \lnot \beta \in \Phi$ because $\Phi$ is a complete theory.
  The other implication ($\alpha \in \Phi$ implies $\alpha in \Th(\calS)$) follows by similar reasoning.

  If $\alpha = \beta \land \gamma$ then $\calS \models \alpha$ means that
  $\calS \models \beta \land \gamma$. So, we have that $\calS \models \beta$ and $\calS \models \gamma$.
  Since both $\beta$ and $\gamma$ have less logical connectives than $\alpha$, the induction hypothesis \eqref{sudoku.eq.ThS} then yields that
  $\beta \in \Phi$ and $\gamma \in \Phi$. Therefore, $\Phi \vdash \beta$ and $\Phi \vdash \gamma$ because $\Phi$ is deductively closed,
  and then it easily follows that $\Phi \vdash \beta \land \gamma$. Another application of the fact that $\Phi$ is deductively
  closed yields that $\alpha = \beta \land \gamma \in \Phi$.
  The other implication ($\alpha \in \Phi$ implies $\alpha in \Th(\calS)$) follows by similar reasoning.

  There is no need to consider other possibilities for $\alpha$ because of the tautologies
  \begin{align*}
    \models &\; (\beta \lor \gamma) \Leftrightarrow \lnot (\lnot \beta \land \lnot \gamma),\\
    \models &\; (\beta \Rightarrow \gamma) \Leftrightarrow (\lnot \beta \lor \gamma) \text{\quad and}\\
    \models &\; (\beta \Leftrightarrow \gamma) \Leftrightarrow (\beta \Rightarrow \gamma) \land (\gamma \Rightarrow \beta).
  \end{align*}
  This completes the proof of Step~3 and the lemma.
\end{proof}

\begin{THM}[Completeness of axiomatization]\label{sudoku.thm.compl-ax}
  Let $\alpha$ be a Sudoku formula. Then: $\vdash \alpha$ if and only if $\models \alpha$.
\end{THM}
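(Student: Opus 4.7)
The plan is to split the biconditional into the easy soundness direction and the harder completeness direction, both of which now follow quickly from the machinery developed in the rest of the appendix.

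For the forward direction ($\vdash \alpha$ implies $\models \alpha$), I would simply invoke Lemma~\ref{sudoku.lem.s-phi-alpha} with $\Phi = \varnothing$. Every full Sudoku grid $\calS$ trivially satisfies $\calS \models \varnothing$ by Definition~\ref{sudoku.def.models-set}, so from $\varnothing \vdash \alpha$ the lemma yields $\calS \models \alpha$ for every full Sudoku grid $\calS$, which is exactly $\models \alpha$.

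For the converse, I would argue by contrapositive: assume $\nvdash \alpha$ and produce a full Sudoku grid in which $\alpha$ fails. By Lemma~\ref{sudoku.lem.8}, from $\nvdash \alpha$ we obtain a complete and consistent Sudoku theory $\Phi$ with $\alpha \notin \Phi$. Lemma~\ref{sudoku.lem.7} then supplies a full Sudoku grid $\calS$ such that $\Phi = \Th(\calS)$. Since $\alpha \notin \Th(\calS)$, by definition of $\Th(\calS)$ it is not the case that $\calS \models \alpha$, and therefore $\not\models \alpha$. Contrapositively, $\models \alpha$ implies $\vdash \alpha$.

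All the real work has already been done in Lemmas~\ref{sudoku.lem.8} and~\ref{sudoku.lem.7}, so the main obstacle here is purely bookkeeping: making sure to apply Lemma~\ref{sudoku.lem.8} to the right formula (to $\alpha$ itself, not to $\lnot\alpha$) and to recognize that ``$\alpha \notin \Th(\calS)$'' means exactly ``$\calS \not\models \alpha$''. Because $\Dax$ is defined as the set of Sudoku formulas true in every full Sudoku grid, no separate argument about axioms is needed --- the semantic characterization falls out immediately from the two preceding lemmas.
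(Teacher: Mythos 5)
Your proposal is correct and follows essentially the same route as the paper: the converse direction (via Lemma~\ref{sudoku.lem.8} and Lemma~\ref{sudoku.lem.7}, arguing contrapositively from $\nvdash\alpha$) is identical, and for the forward direction the paper merely sketches ``an easy induction on the length of the proof,'' which is precisely the content of Lemma~\ref{sudoku.lem.s-phi-alpha} specialized to $\Phi=\varnothing$ as you invoke it. Citing that lemma rather than redoing the induction is a tidy and entirely valid shortcut, not a different argument.
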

\begin{proof}
  Assume that $\vdash \alpha$. This means that there is a proof such that each formula in it is a Sudoku axiom or can be obtained by
  modus ponens. An easy induction on the length of the proof then ensures that for every full Sudoku grid $\calS$
  we have that $\calS \models \alpha$. This shows that $\models \alpha$.

  Assume, now, that $\nvdash \alpha$. Then by Lemma~\ref{sudoku.lem.8}
  there is a complete and consistent Sudoku theory $\Phi$ such that $\alpha \notin \Phi$.
  By Lemma~\ref{sudoku.lem.7} there is a full Sudoku grid $\calS$ such that $\Phi = \Th(\calS)$.
  Since $\alpha \notin \Phi$, it follows that it is not the case that $\calS \models \alpha$,
  so, by definition, $\not\models \alpha$.
\end{proof}

We are finally ready to prove Theorem~\ref{sudoku.thm.completeness}.

\begin{THMNONUM}[Completeness of Sudoku logic, Theorem~\ref{sudoku.thm.completeness}]
  Let $\Phi$ and $\Psi$ be two sets of Sudoku propositions. If $\Phi \models \Psi$ then
  $\Phi \vdash \Psi$.
\end{THMNONUM}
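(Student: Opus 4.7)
The plan is to reduce the statement to Theorem~\ref{sudoku.thm.compl-ax} (Completeness of axiomatization) by packaging the finitely many premisses in $\Phi$ and an arbitrary conclusion from $\Psi$ into a single Sudoku formula of the form ``big conjunction implies conclusion''.

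First I would observe that both sides of the implication are pointwise over $\Psi$: by definition $\Phi \models \Psi$ unfolds into ``$\Phi \models \psi$ for every $\psi \in \Psi$'' and $\Phi \vdash \Psi$ unfolds into ``$\Phi \vdash \psi$ for every $\psi \in \Psi$'', so it suffices to prove the implication for a single Sudoku proposition $\psi$. The key point that unlocks the approach is that the set of all Sudoku propositions is finite (there are at most $9\cdot 9\cdot 9 = 729$ of them), so $\Phi$ itself is finite; enumerate $\Phi = \{\phi_1, \phi_2, \ldots, \phi_n\}$ and set
$$
  \gamma \;=\; (\phi_1 \land \phi_2 \land \ldots \land \phi_n) \Rightarrow \psi.
$$

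Next I would establish two matching equivalences that both reduce the problem to $\gamma$. For the \emph{semantic equivalence} $\Phi \models \psi$ if and only if $\models \gamma$, the forward direction takes any full Sudoku grid $\calS$ satisfying $\phi_1 \land \ldots \land \phi_n$, notes that $\calS$ is its own solution, and concludes $\calS \models \psi$ from $\Phi \models \psi$; while for the reverse direction, if $\calA$ is a Sudoku grid with $\calA \models \Phi$ and $\calS$ is a solution of $\calA$, then Lemma~\ref{sudoku.lem.SmodelsCnsA}$(b)$ gives $\calS \models \Cns(\calA) \supseteq \Phi$, so $\calS \models \phi_1 \land \ldots \land \phi_n$, and $\models \gamma$ produces $\calS \models \psi$. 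For the \emph{syntactic equivalence} $\Phi \vdash \psi$ if and only if $\vdash \gamma$, iterating the Syntactic Deduction Theorem (Theorem~\ref{sudoku.thm.syntded}) yields
$$
  \Phi \vdash \psi \quad\text{if and only if}\quad \vdash \phi_1 \Rightarrow (\phi_2 \Rightarrow \ldots (\phi_n \Rightarrow \psi)\ldots),
$$
and this nested implication is provably equivalent to $\gamma$ via the tautological currying/uncurrying equivalence $(\alpha \land \beta \Rightarrow \chi) \Leftrightarrow (\alpha \Rightarrow (\beta \Rightarrow \chi))$, whose Sudoku instance is a Sudoku axiom.

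Finally, Theorem~\ref{sudoku.thm.compl-ax} delivers $\vdash \gamma$ if and only if $\models \gamma$, and chaining the three equivalences gives $\Phi \models \psi$ if and only if $\Phi \vdash \psi$, as required. The most delicate step in this plan is the semantic equivalence, because the definition of $\Phi \models \psi$ quantifies over all Sudoku grids $\calA$ together with their solutions, whereas $\models \gamma$ quantifies only over full Sudoku grids; the reason these coincide is the monotonicity of negative constraints under solutions supplied by Lemma~\ref{sudoku.lem.SmodelsCnsA}, together with the observation that every full Sudoku grid is a Sudoku grid and a solution of itself. Once that equivalence is in place, the rest is routine bookkeeping around the two Deduction Theorems.
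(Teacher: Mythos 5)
Your proposal is correct and follows essentially the same route as the paper: reduce to a single conclusion $\psi$, exploit the finiteness of $\Phi$ (there are only $9^3$ Sudoku propositions), bridge the premiss-based notions $\Phi\models\psi$ and $\Phi\vdash\psi$ to the premiss-free notions $\models$ and $\vdash$ of a single formula via the two Deduction Theorems, and finish with Theorem~\ref{sudoku.thm.compl-ax}. The only differences are cosmetic --- you package the premisses as a conjunction $\gamma$ rather than a nested implication and re-derive the semantic bridging by hand (essentially re-proving the Semantic Deduction Theorem, correctly handling the quantification over grids and their solutions via Lemma~\ref{sudoku.lem.SmodelsCnsA}) instead of citing Theorem~\ref{sudoku.thm.semded} directly.
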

\begin{proof}
  Clearly, it suffices to show that $\Phi \models \alpha$ implies $\Phi \vdash \alpha$,
  for every set $\Phi$ of Sudoku propositions and every Sudoku proposition $\alpha$.
  Note, first, that $\Phi$ is a finite set as there are only finitely many Sudoku propositions (actually, one can form
  exactly $9^3$ distinct Sudoku propositions). So, let $\Phi = \{\phi_1, \phi_2, \ldots, \phi_n\}$ and assume that
  $$
    \{\phi_1, \phi_2, \ldots, \phi_n\} \models \alpha.
  $$
  Then several applications of the Semantic Deduction Theorem yield
  $$
     \models \phi_1 \Rightarrow (\phi_2 \Rightarrow (\ldots (\phi_n \Rightarrow \alpha))).
  $$
  By Theorem~\ref{sudoku.thm.compl-ax} we then get
  $$
    \vdash \phi_1 \Rightarrow (\phi_2 \Rightarrow (\ldots (\phi_n \Rightarrow \alpha))),
  $$
  and the Syntactic Deduction Theorem then yields
  $$
    \{\phi_1, \phi_2, \ldots, \phi_n\} \vdash \alpha.
  $$
  Therefore, $\Phi \vdash \alpha$.
\end{proof}

\end{document}